\documentclass[11pt, twoside]{amsart}

\title[Nondegenerate module categories]{Nondegenerate  module categories}


\author{Chelsea Walton}
\address{Department of Mathematics, Rice University,
P.O. Box 1892, Houston, TX 77005-1892, USA}
\email{notlaw@rice.edu}

\author{Harshit Yadav}
\address{Department of Mathematics, University of Alberta,
Edmonton, Alberta, T6G 2G1, Canada}
\email{hyadav3@ualberta.ca}



\usepackage{mathabx}
\usepackage{import}
\usepackage{amsmath}
\usepackage{amsfonts, stmaryrd}
\usepackage{amsthm}
\usepackage{amssymb,bbm}
\usepackage{dsfont}
\usepackage[alphabetic, initials, nobysame]{amsrefs}
\usepackage[english]{babel}
\usepackage{url}
\usepackage{fancyhdr}
\usepackage{graphicx}
\usepackage{verbatim}
\usepackage[normalem]{ulem}
\usepackage[shortlabels]{enumitem}
\newcommand{\stkout}[1]{\ifmmode\text{\sout{\ensuremath{#1}}}\else\sout{#1}\fi}
\usepackage[
colorlinks=true,
linkcolor=black, 
anchorcolor=black,
citecolor=black,
urlcolor=black, 
]{hyperref}
\usepackage[all,cmtip]{xy}
\usepackage{geometry}
\usepackage{microtype} 
\usepackage{mathtools} 
\usepackage[dvipsnames]{xcolor}
\usepackage{tikz-cd,quiver}

\allowdisplaybreaks



\input xy
\xyoption{all}


\definecolor{forest}{rgb}{0.0, 0.5, 0.0}

\newcommand{\bijar}[1][]{%
 \ar[#1]
 \ar@<0.7ex>@{}[#1]|-*=0[@]{\sim}} 

\setlength\topmargin{0in}
\setlength\headheight{0in}
\setlength\headsep{0.4in}
\setlength\textheight{8.6in}
 \setlength\textwidth{6.5in}
\setlength\oddsidemargin{0in}
\setlength\evensidemargin{0in}

\usepackage{enumitem}


\usepackage{calrsfs}
\DeclareMathAlphabet{\cal}{OMS}{zplm}{m}{n}

\DeclareMathAlphabet{\mathsf}{OT1}{cmss}{m}{n} 

\usepackage[justification=centering]{caption}

\newcommand{\oop}{\mathrm{op}}


\newcommand{\act}{\triangleright}
\DeclareFontFamily{U}{stixscr}{}
\DeclareFontShape{U}{stixscr}{m}{n}{<-> s*[0.8] stix-mathscr}{}

\makeatletter
\newcommand*{\doublerightarrow}[2]{\mathrel{
  \settowidth{\@tempdima}{$\scriptstyle#1$}
  \settowidth{\@tempdimb}{$\scriptstyle#2$}
  \ifdim\@tempdimb>\@tempdima \@tempdima=\@tempdimb\fi
  \mathop{\vcenter{
    \offinterlineskip\ialign{\hbox to\dimexpr\@tempdima+1em{##}\cr
    \rightarrowfill\cr\noalign{\kern.5ex}
    \rightarrowfill\cr}}}\limits^{\!#1}_{\!#2}}}

\newcommand{\coev}{\textnormal{coev}}

\newcommand{\ev}{\textnormal{ev}}
\newcommand{\End}{\operatorname{End}}

\newcommand{\Hom}{\operatorname{Hom}}
\newcommand{\iHom}{\underline{\mathrm{Hom}}}

\newcommand{\id}{\textnormal{id}}

\newcommand{\isomorph}{\stackrel{\sim}{\to}}

\newcommand{\one}{\mathds{1}}

\newcommand{\reg}{\operatorname{reg}}


\newcommand{\Fun}{\mathsf{Fun}_\Bbbk}

\newcommand{\Mod}{\mathsf{Mod}}

\newcommand{\FdVec}{\mathsf{FdVec}}


\newcommand{\cA}{\cal{A}}
\newcommand{\cB}{\cal{B}}
\newcommand{\cC}{\cal{C}}
\newcommand{\cD}{\cal{D}}
\newcommand{\cE}{\cal{E}}

\newcommand{\cM}{\cal{M}}
\newcommand{\cN}{\cal{N}}

\newcommand{\cS}{\cal{S}}
\newcommand{\cT}{\cal{T}}

\newcommand{\cZ}{\cal{Z}}



\def\tE{\mathtt{E}}

\def\tEC{\tE_{\cC}}
\def\tEM{\tE_{\cM}}

\newcommand{\kk}{\Bbbk}


\def\Hmod{H$-$\mathsf{FdMod}}
\def\Bmod{B$-$\mathsf{FdMod}}
\def\ra{\mathrm{ra}}
\def\Fun{\mathsf{Fun}}




\newtheoremstyle{defstyle}
  {0.5cm}                   
  {0.5cm}                   
  {\normalfont}           
  {}     
  {\normalfont\bfseries}  
  {:}                     
  {0.3cm}              
  {\thmname{#1}\thmnumber{ #2}\thmnote{ (#3)}}

\numberwithin{equation}{section}

\newtheorem*{rep@theorem}{\rep@title}
\newcommand{\newreptheorem}[2]{%
\newenvironment{rep#1}[1]{%
 \def\rep@title{#2 \ref{##1}}%
 \begin{rep@theorem}}%
 {\end{rep@theorem}}}
\makeatother

\newtheorem{theorem}{Theorem}[section]

\newtheorem{proposition}[theorem]{Proposition}

\newreptheorem{proposition}{Proposition}
\newtheorem{corollary}[theorem]{Corollary}
\newreptheorem{corollary}{Corollary}
\newtheorem{lemma}[theorem]{Lemma}

\newtheorem{theorem*}{Theorem}
\newreptheorem{theorem}{Theorem}

\theoremstyle{definition}
\newtheorem{definition}[theorem]{Definition}
\newtheorem{notation}[theorem]{Notation}

\newtheorem{hypothesis}[theorem]{Hypothesis}

\newtheorem{example}[theorem]{Example}
\newtheorem{remark}[theorem]{Remark}

\newtheorem{question}[theorem]{Question}

\makeatletter              
\let\c@equation\c@theorem  
\makeatother
\numberwithin{equation}{section}




\makeatletter
\@namedef{subjclassname@2020}{%
  \textup{2020} Mathematics Subject Classification}

\makeatother

\subjclass[2020]{18M15, 16T05}
\keywords{braided module category, factorizable comodule algebra, factorizable module category, monadicity, nondegenerate module category, quasitriangular comodule algebra}

\begin{document}

\begin{abstract}
Due to the work of Shimizu (2019),  various  nondegeneracy conditions for braided finite tensor categories are equivalent. This theory is partially extended to braided module categories here. We introduce when a braided module category is  ``nondegenerate"  and   ``factorizable", and establish that these properties are equivalent. The proof involves a new monadicity result for module categories. 
Lastly, we examine the Hopf case, using Kolb's (2020) notion of a quasitriangular comodule algebra to introduce ``factorizable" comodule algebras. We then show that the representation category of a quasitriangular comodule algebra is nondegenerate in our sense precisely when the comodule algebra is factorizable. 
Several examples are provided.
\end{abstract}

\maketitle

\setcounter{tocdepth}{1}

\begin{changemargin}{2.1cm}{2.7cm} 
{\small \tableofcontents}
\end{changemargin}

\section{Introduction}
Braided tensor categories have become ubiquitous objects appearing in mathematical study of $3$d-Topological Quantum Field Theories (3d-TQFTs)\cite{kerler2001non, turaev2017monoidal} and $2$d-Conformal Field Theories (2d-CFTs) \cite{segal1989cft, bakalov2001book, fuchs2002tft}. This, in turn, leads to their importance in understanding invariants of $3$-manifolds, (gapped) topological phases of matter, and topological quantum computation; see, e.g., \cite{wang2010tqc, wen2017zoo}. In all of these applications, one is interested in braided categories with the extra property of \textit{nondegeneracy}, as reviewed below.
On the other hand, as is usual in algebra, (braided) tensor categories are understood through their module categories.
In recent works, the notion of a braided module category over a braided tensor category has been introduced \cite{enriquez2007quasi,brochier2013cyclotomic,kolb2020braided}. These objects have recently played a role in vital applications, such as in the study of quantum character varieties \cite{ben2018quantum}, in examining quantum symmetric pairs \cite{balagovic2019kmatrix, kolb2020braided}, in the definition of 2-categorical Picard groups and in the extension theory of braided finite tensor categories \cite{davydov2021braided}, 
and the resolution of the minimal modular extension conjecture \cite{johnson2024minimal}. Given the importance of nondegeneracy for braided tensor categories, the goal of this article is to study nondegeneracy of braided module categories. 

\smallskip

In this work, we focus on the case when  tensor categories and module categories over them  are finite. Moreover, all linear structures are over an algebraically closed field $\kk$.

\smallskip

Let us review the  notions of nondegeneracy for braided finite tensor categories  $\cC:=(\cC, \otimes, \one, c)$. Namely, $(\cC, \otimes, \one)$ is a tensor category over $\kk$, that comes equipped with a natural isomorphism  (a braiding) $c:=\{c_{X,Y}: X \otimes Y \overset{\sim}{\to} Y \otimes X\}_{X,Y \in \cC}$ satisfying the hexagon axioms. Note that when $\cC$ is finite, the coend object ${\tt C}_{\cC}:= \textstyle \int^{X \in \cC} X^* \otimes X$ of $\cC$ exists and is a Hopf algebra in $\cC$. We say that: 

\smallskip

\begin{itemize}
\item 
$\cC$ is {\it nondegenerate} if, for the Hopf algebra  ${\tt C}_{\cC}$ above, the Hopf pairing $\omega_\cC: {\tt C}_{\cC} \otimes {\tt C}_{\cC} \to \one$ of $ {\tt C}_{\cC}$ \linebreak
yields an isomorphism $\theta_\cC: (\omega_\cC \otimes \id)(\id \otimes \coev_{{\tt C}_{\cC}}): {\tt C}_{\cC} \to {\tt C}_{\cC}^*$ in $\cC$;

\smallskip

\item 
$\cC$ is {\it weakly factorizable} if the map $\Hom_{\cC} (\one, {\tt C}_{\cC}) \to \Hom_{\cC}({\tt C}_{\cC}, \one)$, $f \mapsto \omega_{\cC}(f \otimes \id)$ is bijective;

\smallskip

\item $\cC$ is {\it factorizable} if the embeddings of $\cC$ and  its mirror $\overline{\cC}$ to the Drinfeld $\cZ(\cC)$ yields an equivalence between the Deligne product $\cC \boxtimes \overline{\cC}$ and $\cZ(\cC)$, as braided finite tensor categories; 

\smallskip

\item $\cC$ has {\it trivial symmetric (M\"{u}ger) center} if the full subcategory $\cZ_2(\cC)$ of objects $X$ of $\cC$ for which $c_{Y,X} \circ c_{X,Y} = \id_{X \otimes Y}$, for all $Y \in \cC$, is equivalent to the category of $\kk$-vector spaces.
\end{itemize}

\smallskip

A powerful result of K. Shimizu verifies that these  conditions   are equivalent in the finite case.

\medskip

\noindent {\bf Shimizu's Theorem} \cite[Theorem~1.1]{shimizu2019non} Nondegeneracy, weak factorizability, factorizability, and  trivial symmetric center are equivalent properties for a braided finite tensor category.\qed

\medskip

This builds on several prior works in the semisimple (fusion) case \cite{bruguieres2000premod, muger2003subfactors, drinfeld2010braided, etingof2015tensor} and in Hopf case \cite{reshetikhin1988quantum, radford1994kauffman, takeuchi2001modular}.  In this article, we will introduce a module-theoretic version of nondegeneracy and factorizability, and establish that these properties are equivalent. Analogues of weak factorizability and trivial symmetric center will also be discussed.

\smallskip

Fix $\cC:=(\cC, \otimes, \one, c)$ to be a braided finite tensor category for the rest of the section, unless stated otherwise. A left $\cC$-module category $(\cM, \triangleright)$ is called {\it braided} if it is equipped with a natural isomorphism   $e:=\{e_{X,M}: X \triangleright M \overset{\sim}{\to} X \triangleright M\}_{X \in \cC, M \in \cM}$   satisfying certain compatibility axioms. Examples include the regular module categories $(\cM, \triangleright) = (\cC, \otimes)$ with $e_{X,M}:=c_{M,X} \circ c_{X,M}$. 

\smallskip

Key examples of braided left $\cC$-module categories are the {\it reflective centers} $\cE_\cC(\cM)$ of arbitrary left $\cC$-module categories $\cM$, which were introduced by Laugwitz, Yakimov, and the first author in \cite{laugwitz2023reflective}. Objects are pairs $(M, e^M)$, where  $M \in \cM$ and $e^M:=\{e_{X}^M: X \triangleright M \overset{\sim}{\to} X \triangleright M\}_{X \in \cC}$ is a certain natural isomorphism in $\cM$. Their construction is analogous to the construction of the braided finite tensor category, the Drinfeld center $\cZ(\cC)$, of an arbitrary finite tensor category $\cC$.

\smallskip

Now for a  left $\cC$-module category $(\cM, \triangleright)$, that is not necessarily braided, the right adjoint of $(- \triangleright M): \cC \to \cM$ exists, for each $M \in \cM$. It is denoted by $ \underline{\Hom}(M,-): \cM \to \cC$. Further, when $\cM$ is finite,  the end object ${\tt E}_{\cM}:= \textstyle \int_{M \in \cM} \underline{\Hom}(M,M)$ of $\cM$ exists and is an algebra in $\cC$. For instance, when $\cM = \cC$ as above, we get the  end object ${\tt E}_{\cC}:= \textstyle \int_{X \in \cC} X \otimes X^*$ of $\cC$, which is dual to~${\tt C}_{\cC}$. 
Finally,  using the universal properties of the end objects ${\tt E}_{\cM}$ and ${\tt E}_{\cC}$, we derive a morphism 
$$\omega_\cM: \one \to {\tt E}_{\cC} \otimes {\tt E}_{\cM} \quad (\text{{\it universal copairing}})$$ in $\cC$, that is compatible with the braiding of $\cM$ [Definition~\ref{def:omegaM}].

\smallskip

With these notions, we introduce the concept of nondegeneracy and factorizability for braided module categories. 
From now on, assume that  braided  left $\cC$-module categories $(\cM, \triangleright, e)$ satisfy the conditions specified in Hypothesis~\ref{hyp:Sec4}: nonzero, indecomposable, exact, and finite.

\begin{definition} [Definition~\ref{def:Mnondeg}] For a braided left $\cC$-module category $(\cM, \triangleright, e)$, take the morphism
$$\theta_\cM: (\ev_{{\tt E}_{\cC}} \otimes \id) (\id \otimes \omega_\cM): {\tt E}_{\cC}^* \longrightarrow {\tt E}_{\cM}$$
in $\cC$. We say that $(\cM, \triangleright, e)$ is {\it nondegenerate} if $\theta_\cM$ is an isomorphism in $\cC$.
\end{definition}

\begin{definition} [Definition~\ref{def:Mfact}] For a braided left $\cC$-module category $(\cM, \triangleright, e)$, we have a functor 
$$G_{\cM}: \cM \boxtimes \mathsf{Fun}_{\cC|}(\cM,\cM) \to \cE_{\cC}(\cM), \quad M \boxtimes (F,s) \mapsto (F(M), e^M),$$
where $\mathsf{Fun}_{\cC|}(\cM,\cM)$ is the category of exact $\cC$-module endofunctors $(F,s)$ of $\cM$, and $e^M$ is in terms of $s$.  We say that $(\cM, \triangleright, e)$ is {\it factorizable} if $G_{\cM}$ is an equivalence of braided module categories.
\end{definition}

For instance, the regular left $\cC$-module category $(\cC, \; \triangleright:= \otimes, \; e:=c^2)$ is  nondegenerate (resp., factorizable) as a braided module category precisely when $(\cC, \otimes, \one, c)$ is nondegenerate (resp., factorizable) as a braided finite tensor category [Examples~\ref{ex:regular-fact} and \ref{ex:Mreg-nondeg}].

\smallskip

This brings us to the main result of this article.

\begin{theorem}[Theorem~\ref{thm:FactNondeg}] Consider the terminology above. Then, a braided left $\cC$-module category $(\cM, \triangleright, e)$ is nondegenerate if and only if it is factorizable. \qed
\end{theorem}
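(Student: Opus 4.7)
The plan is to follow the structure of Shimizu's proof of the tensor-category theorem, but reinterpreted for the module category $\cM$ through the new module-categorical monadicity theorem announced in the abstract. The core idea is to present both the source $\cM \boxtimes \mathsf{Fun}_{\cC|}(\cM,\cM)$ and the target $\cE_\cC(\cM)$ of $G_\cM$ as categories of modules over suitable algebras acting on $\cM$, and then to identify $G_\cM$ with the change-of-base functor induced by $\theta_\cM$.

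First, I would use the Laugwitz-Walton-Yakimov reflective-center construction, together with the new module-categorical monadicity result, to describe $\cE_\cC(\cM)$ as the category of modules in $\cM$ over a central monad whose underlying endofunctor has the form ${\tt E}_\cM \blkact (-)$, with multiplication obtained from the algebra structure on the end object ${\tt E}_\cM \in \cC$. The half-braiding $e^M$ on a pair $(M, e^M) \in \cE_\cC(\cM)$ translates under this equivalence into the ${\tt E}_\cM$-module structure on $M$.

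Second, I would give a parallel monadic presentation of the source. The category $\mathsf{Fun}_{\cC|}(\cM,\cM)$ is the $\cC$-module dual of $\cM$, and an exact module endofunctor of $\cM$ is equivalent data to an ${\tt E}_\cM$-module in $\cC$. Deligne-tensoring with $\cM$ and straightening then shows that $\cM \boxtimes \mathsf{Fun}_{\cC|}(\cM,\cM)$ is equivalent to the category of modules in $\cM$ over the monad ${\tt E}_\cC^* \blkact (-)$, where the action is built from the universal copairing $\omega_\cM$. With both sides expressed in this uniform way, unpacking the formula $G_\cM(M \boxtimes (F,s)) = (F(M), e^M)$ shows that $G_\cM$ is precisely the change-of-monad functor associated with the algebra morphism $\theta_\cM: {\tt E}_\cC^* \to {\tt E}_\cM$. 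By a standard monadic descent argument, this functor is an equivalence if and only if $\theta_\cM$ is an isomorphism, which is the definition of nondegeneracy, and this yields the theorem.

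The main obstacle is the monadicity step for module categories. Classical Beck-Barr monadicity operates on a single category, whereas here the monad lives in $\cC$ and acts on $\cM$ in a way that must be tracked compatibly with the braided module structure, and the identification $\cE_\cC(\cM) \simeq \lmod{{\tt E}_\cM}$ inside $\cM$ must be proved carefully using exactness of $\cM$ over $\cC$. Once this framework is in hand and the comparison of source and target through $\theta_\cM$ is established, the equivalence in the statement reduces to the formal principle that a morphism of algebras induces an equivalence of module categories precisely when it is an isomorphism.
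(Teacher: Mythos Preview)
Your high-level strategy is exactly the paper's: present both source and target of $G_\cM$ as Eilenberg--Moore categories over $\cM$ and identify $G_\cM$ with the restriction functor along $\theta_\cM$, then invoke the fact that restriction along an algebra map is an equivalence iff the map is an isomorphism. However, you have the two monadic identifications swapped, and this is not merely a typo---it would cause the argument to fail as written.

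The reflective center $\cE_\cC(\cM)$ is equivalent to ${\tt C}_\cC\text{-}\mathsf{Mod}(\cM) \cong {\tt E}_\cC^*\text{-}\mathsf{Mod}(\cM)$, \emph{not} to ${\tt E}_\cM\text{-}\mathsf{Mod}(\cM)$. The reason is structural: a reflection $e^M = \{e^M_X\}_{X \in \cC}$ is indexed by objects of $\cC$, so by the universal property of the coend ${\tt C}_\cC = \int^{X} X^* \otimes X$ it packages into a single action map ${\tt C}_\cC \triangleright M \to M$. The end ${\tt E}_\cM = \int_M \underline{\Hom}(M,M)$ knows nothing about the reflection data. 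Conversely, your second step correctly recalls that $\mathsf{Fun}(\cM,\cM) \simeq \mathsf{Mod}\text{-}{\tt E}_\cM(\cC)$ via the monadicity result, but Deligne-tensoring with $\cM$ then yields ${\tt E}_\cM\text{-}\mathsf{Mod}(\cM)$, not ${\tt E}_\cC^*\text{-}\mathsf{Mod}(\cM)$; the copairing $\omega_\cM$ plays no role here. With the correct assignments, $\theta_\cM: {\tt E}_\cC^* \to {\tt E}_\cM$ induces $\textnormal{Res}_{\theta_\cM}: {\tt E}_\cM\text{-}\mathsf{Mod}(\cM) \to {\tt E}_\cC^*\text{-}\mathsf{Mod}(\cM)$, and the commuting square $G_\cM = \widehat{H}_\cC \circ \textnormal{Res}_{\theta_\cM} \circ H_\cM$ has the right variance. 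Once you make this correction, your sketch coincides with the paper's proof.
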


To prove this result, we first recall that $\theta_\cM$ is an isomorphism if and only if a restriction functor $\textnormal{Res}_{\hbox{\tiny$\theta_{\cM}$}}: {\tt E}_{\cM}$-$\mathsf{Mod}(\cM) \to {\tt E}_{\cC}^*$-$\mathsf{Mod}(\cM)$ is an equivalence; these  are Eilenberg-Moore categories [$\S$\ref{ss:EMcat}]. We then establish equivalences $H_\cM:  \cM \boxtimes \mathsf{Fun}_{\cC|}(\cM,\cM) \to {\tt E}_{\cM}\text{-}\mathsf{Mod}(\cM)$ [Proposition~\ref{prop:HM}] and  $\widehat{H}_\cC: \cE_\cC(\cM) \to {\tt E}_{\cC}^*\text{-}\mathsf{Mod}(\cM)$ [Proposition~\ref{prop:HC}], and show that the diagram below commutes. 
\[
\xymatrix@C=7pc@R=1.2pc{
\cM \; \boxtimes \;  \mathsf{Fun}_{\cC|}(\cM, \cM)
\ar[r]^(.6){G_{\cM}}
\ar[d]_(.45){H_{\cM}}^{\sim}
&
\cE_{\cC}(\cM)\\
{\tt E}_{\cM}\text{-}\mathsf{Mod}(\cM)
\ar[r]^(.5){\textnormal{Res}_{\hbox{\tiny$\theta_{\cM}$}}}
&
{\tt E}_{\cC}^*\text{-}\mathsf{Mod}(\cM)  \ar[u]_(.5){\widehat{H}_{\cC}}^(.5){\sim}
}
\]
The equivalence $H_\cM$, in particular, involves the  monadicity result for module categories below. 

\begin{proposition}[Proposition~\ref{prop:monad}]
Consider the category of exact endofunctors $\mathsf{Fun}(\cM, \cM)$ of~$\cM$, and the functor $\rho: \cC \to \mathsf{Fun}(\cM, \cM)$ given by $X \to (X \triangleright -)$. Then, the right adjoint $\rho^{\textnormal{ra}}$ of $\rho$ exists, and the adjunction is monadic. Further, this yields an equivalence of categories: \[
\mathsf{Fun}(\cM, \cM) \simeq \mathsf{Mod}\text{-}{\tt E}_{\cM}(\cC).\]

\vspace{-.3in}

 \qed
\end{proposition}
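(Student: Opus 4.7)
My plan is to construct $\rho^{\textnormal{ra}}$ as an end, identify the induced monad with tensoring by ${\tt E}_\cM$, and then verify monadicity via Beck's theorem.

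First, I would define
$$\rho^{\textnormal{ra}}(F) := \int_{M \in \cM} \underline{\Hom}(M, F(M)),$$
which exists because $\cM$ is finite. The adjunction isomorphism $\Hom_{\mathsf{Fun}(\cM,\cM)}(\rho(X), F) \cong \Hom_\cC(X, \rho^{\textnormal{ra}}(F))$ is obtained by writing the natural transformation set as $\int_M \Hom_\cM(X \triangleright M, F(M))$, applying the internal-Hom adjunction $\Hom_\cM(X \triangleright M, -) \cong \Hom_\cC(X, \underline{\Hom}(M, -))$ pointwise, and pulling $\Hom_\cC(X, -)$ through the end.

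Next, I would identify the monad $T = \rho^{\textnormal{ra}} \circ \rho$. Using the projection formula $\underline{\Hom}(M, X \triangleright N) \cong X \otimes \underline{\Hom}(M, N)$ (valid since $\cM$ is exact and $\cC$ is rigid), together with the fact that $X \otimes -$ preserves ends (as it has the right adjoint $X^* \otimes -$), one obtains $T(X) \cong X \otimes {\tt E}_\cM$. A routine compatibility check confirms that the monad multiplication agrees with the natural algebra structure on ${\tt E}_\cM$, so $T$-algebras in $\cC$ coincide with right ${\tt E}_\cM$-modules.

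For monadicity, I would apply the crude form of Beck's theorem, which requires $\rho^{\textnormal{ra}}$ to be conservative and to preserve reflexive coequalizers. The latter holds because reflexive coequalizers in an abelian category commute with finite limits, $\underline{\Hom}(M,-)$ is exact (by exactness of $\cM$), and evaluation at each $M$ is exact on exact endofunctors. Conservativity is the delicate step: given $f: F \to F'$ with $\rho^{\textnormal{ra}}(f)$ invertible, I would invoke the counit $\rho^{\textnormal{ra}}(F) \triangleright M \to F(M)$, adjoint to the end projection $\pi_M$ under $(- \triangleright M) \dashv \underline{\Hom}(M, -)$, and use indecomposability and exactness of $\cM$ to force $f_M$ to be invertible for every $M$.

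The principal obstacle I foresee is precisely this conservativity argument: when $\cM \ne \cC$ the functor $\rho$ is not faithful, so one cannot borrow the classical monadicity of the regular action. If the Beck route proves awkward, my fallback plan is to exhibit an explicit quasi-inverse $\Phi: \mathsf{Mod}\text{-}{\tt E}_\cM(\cC) \to \mathsf{Fun}(\cM, \cM)$ by declaring $\Phi(V)(M)$ to be the coequalizer of the two natural maps $V \otimes ({\tt E}_\cM \triangleright M) \rightrightarrows V \triangleright M$ --- the first induced by the right ${\tt E}_\cM$-action on $V$, the second by the canonical action of ${\tt E}_\cM$ on $M$ arising from the end projections $\pi_M$ followed by evaluation --- and to verify directly that $\Phi$ inverts the comparison functor.
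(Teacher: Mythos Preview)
Your proposal is correct and follows essentially the same route as the paper: define $\rho^{\textnormal{ra}}$ as the end, identify the monad with $(-)\otimes {\tt E}_\cM$, and invoke Beck. The one step you flag as delicate---conservativity of $\rho^{\textnormal{ra}}$---is handled in the paper by citing Shimizu's result that $\rho^{\textnormal{ra}}$ is exact and faithful whenever $\cM$ is exact and indecomposable (so conservativity and preservation of coequalizers come for free), and your fallback quasi-inverse via the coequalizer of $V \otimes ({\tt E}_\cM \triangleright M) \rightrightarrows V \triangleright M$ is exactly the explicit inverse the paper records.
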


\pagebreak

Returning to Shimizu's Theorem above, we introduce versions of weak-factorizability and trivial symmetric center for braided module categories [Definitions~\ref{def:wf},~\ref{def:trivM}], and obtain the result below.

\begin{theorem} [Lemma~\ref{lem:non-wf}, Theorem~\ref{thm:wf-triv}] \label{thm:wf-intro}
If a braided module category $\cM$ is nondegenerate, then $\cM$ is weakly-factorizable, and as a result, has trivial symmetric center. \qed
\end{theorem}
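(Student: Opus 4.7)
The plan is to prove the two implications separately, both leveraging the universal copairing $\omega_\cM : \one \to {\tt E}_\cC \otimes {\tt E}_\cM$ and following Shimizu's strategy from the braided tensor category case.

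For the first implication (nondegenerate implies weakly factorizable), I would apply the functor $\Hom_\cC(\one, -)$ to the isomorphism $\theta_\cM : {\tt E}_\cC^* \xrightarrow{\sim} {\tt E}_\cM$ to obtain a bijection $\Hom_\cC(\one, {\tt E}_\cC^*) \xrightarrow{\sim} \Hom_\cC(\one, {\tt E}_\cM)$ via postcomposition. Next, the rigidity adjunction $\Hom_\cC(\one, {\tt E}_\cC^*) \cong \Hom_\cC({\tt E}_\cC, \one)$, given by $g \mapsto \ev_{{\tt E}_\cC}(g \otimes \id_{{\tt E}_\cC})$, transports this to a bijection $\Hom_\cC({\tt E}_\cC, \one) \xrightarrow{\sim} \Hom_\cC(\one, {\tt E}_\cM)$. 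A brief string-diagram calculation using $\theta_\cM = (\ev_{{\tt E}_\cC} \otimes \id)(\id \otimes \omega_\cM)$ together with the zigzag identity should then identify this composed bijection as $f \mapsto (f \otimes \id_{{\tt E}_\cM}) \circ \omega_\cM$, which is precisely the comparison map in Definition~\ref{def:wf}.

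For the second implication (weakly factorizable implies trivial symmetric center), I would adapt Shimizu's proof in the classical case. There, $\cZ_2(\cC)$ is detected by a morphism-level condition involving the Hopf pairing $\omega_\cC$ (essentially, objects of $\cZ_2(\cC)$ correspond to ${\tt C}_\cC$-comodules on which the pairing degenerates), and weak factorizability forces this characterization to reduce to $\Vect$. For a braided module category $(\cM, \triangleright, e)$, the symmetric center of Definition~\ref{def:trivM} should admit an analogous single-morphism description in terms of $e$ and $\omega_\cM$ via the universal property of the end ${\tt E}_\cM$. The plan is then to feed this description into the $\Hom$-bijection supplied by weak factorizability, forcing any object of the symmetric center to factor through the trivial subobject.

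The main obstacle will be in the second implication: translating the pointwise symmetry condition on $e_{X,M}$ for all $X \in \cC$ into a \emph{single} equalizer- or kernel-type condition involving ${\tt E}_\cM$ and $\omega_\cM$, playing the role that ${\tt C}_\cC$-comodule structures play in Shimizu's argument. Once this module-theoretic dictionary is in place, the collapse to triviality under the weak-factorizability $\Hom$-bijection should follow in close parallel with \cite[Theorem~1.1]{shimizu2019non}.
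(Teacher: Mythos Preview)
Your first implication is correct and matches the paper's Lemma~\ref{lem:non-wf}: the paper simply writes down the inverse of $\Omega_\cM$ as $g \mapsto \ev_{{\tt E}_\cC}\bigl((\theta_\cM^{-1} \circ g) \otimes \id_{{\tt E}_\cC}\bigr)$, which is exactly what your rigidity-adjunction argument unwinds to.

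For the second implication your plan has a real gap. The paper (Theorem~\ref{thm:wf-triv}) does \emph{not} attempt to rewrite the condition ``$e_{X,M}=\id$ for all $M$'' as an equalizer or kernel inside $\cM$ involving ${\tt E}_\cM$. Instead it works entirely on the $\cC$-side: setting $\cD:=\cZ_2(\cM)$, one first checks that $\cD$ is a finite tensor subcategory of $\cC$ (Lemma~\ref{lem:Z2CM-tensor-subcat}), then forms its own end ${\tt E}_\cD$ and the canonical restriction epimorphism $\phi:{\tt E}_\cC\twoheadrightarrow {\tt E}_\cD$. The crucial computation (Lemma~\ref{lem:Z2M-u}) is that the copairing degenerates after restriction:
\[
(\phi\otimes\id_{{\tt E}_\cM})\,\omega_\cM \;=\; (\phi\otimes\id_{{\tt E}_\cM})\,(u_{{\tt E}_\cC}\otimes u_{{\tt E}_\cM}).
\]
Combining this with bijectivity of $\Omega_\cM$ and epicity of $\phi$ shows that the injective map $\beta:\Hom_\cC({\tt E}_\cD,\one)\to\Hom_\cC(\one,{\tt E}_\cM)$, $f\mapsto(f\phi\otimes\id)\omega_\cM$, has one-dimensional image; hence $\dim_\kk\Hom_\cC({\tt E}_\cD,\one)=1$, and Shimizu's criterion \cite[Lemma~5.7]{shimizu2019non} then forces $\cD\simeq\mathsf{FdVec}$.

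Your proposed route---encoding membership in $\cZ_2(\cM)$ as a single condition on ${\tt E}_\cM$ and then ``forcing objects to factor through the trivial subobject''---misses both key ingredients: the end ${\tt E}_\cD$ of the symmetric center as a tensor category in its own right, and the degeneration identity for $\omega_\cM$ along $\phi$. Without these there is no clear mechanism by which the Hom-bijection $\Omega_\cM$ constrains $\cZ_2(\cM)$, since $\Omega_\cM$ only relates $\Hom_\cC({\tt E}_\cC,\one)$ to $\Hom_\cC(\one,{\tt E}_\cM)$ and does not directly see individual objects of $\cC$.
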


See $\S\S$\ref{ss:wf}, \ref{ss:triv} for details about this result; we also inquire  when the converse directions hold.

\smallskip

Next, we turn our attention to the Hopf case. Just as finite-dimensional representations of a finite-dimensional quasitriangular Hopf algebra $(H,R)$ yields a braided finite tensor category,
\[
\cC:=H\text{-}\mathsf{FdMod},
\]
the finite-dimensional representations of a finite-dimensional quasitriangular left $H$-comodule algebra $(B,K)$ yields a braided finite left $\cC$-module category,
\[
\cM:=B\text{-}\mathsf{FdMod}.
\]
Here, $R$ (resp., $K$) is referred to as the $R$-matrix (resp., $K$-matrix) of $H$ (resp., $B$). Quasitriangular structures on comodule algebras were introduced by Kolb \cite{kolb2020braided}, and are reviewed in $\S$\ref{sec:qtcomodalg}. It is also well-known that the braiding for $H\text{-}\mathsf{FdMod}$  is nondegenerate if and only if the quasitriangular Hopf algebra $(H,R)$ is factorizable [$\S$\ref{subsec:fact-Hopf}]. 

\smallskip

Now given a finite-dimensional quasitriangular left $H$-comodule algebra $(B,K)$, we introduce the concept of when it is  {\it factorizable}  [Definition~\ref{def:thetaB}]. For example, the coregular quasitriangular left $H$-comodule algebra $(H, R_{21}R)$ is factorizable precisely when the quasitriangular Hopf algebra $(H,R)$ is factorizable [Example~\ref{ex:fact-reg}]. We also establish the result below.

\begin{theorem}[Theorem~\ref{thm:nondeg-comodules}]
Let $H$ be a finite-dimensional quasitriangular Hopf algebra. Then, a finite-dimensional quasitriangular left $H$-comodule algebra $B$ is factorizable if and only if $B\text{-}\mathsf{FdMod}$ is a nondegenerate left $(H\text{-}\mathsf{FdMod})$-module category. \qed
\end{theorem}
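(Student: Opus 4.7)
The plan is to unwind the abstract nondegeneracy condition (the isomorphism $\theta_\cM$ in $\cC$) in the concrete Hopf case $\cC = H\text{-}\mathsf{FdMod}$ and $\cM = B\text{-}\mathsf{FdMod}$, and to check that it matches, on the nose, the explicit factorizability condition on $(B,K)$ supplied by Definition~\ref{def:thetaB}. In other words, I would show directly that the module-theoretic morphism $\theta_\cM : \tEC^{*} \to \tEM$ of the earlier definition coincides, under natural identifications of $\tEC^{*}$ with $H$ and of $\tEM$ with an appropriate $H$-module structure on $B^{*}$, with the Hopf-theoretic morphism $\theta_B$ appearing in Definition~\ref{def:thetaB}. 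Once this identification is in place, $\theta_\cM$ is an isomorphism in $\cC$ if and only if $\theta_B$ is, so the theorem follows at once.

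The first step is to identify the two end objects as explicit objects in $H\text{-}\mathsf{FdMod}$. It is classical that for $\cC = H\text{-}\mathsf{FdMod}$ the end $\tEC = \int_{X \in \cC} X \otimes X^{*}$ is given by $H^{*}$ with the standard $H$-action, and hence $\tEC^{*} \cong H$ as an object of $\cC$. For the $\cC$-module category $\cM = B\text{-}\mathsf{FdMod}$, whose $\cC$-action comes from the coaction $B \to H \otimes B$, the internal hom is $\underline{\Hom}(M,N) = \Hom_{\kk}(M,N)$ with $H$-action and $B$-bimodule structure induced by the coaction. The end $\tEM = \int_{M \in \cM} \underline{\Hom}(M,M)$ can then be realized as a version of $B^{*}$ equipped with the $H$-action determined by the coaction. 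The second step is to pin down the universal copairing $\omega_\cM : \one \to \tEC \otimes \tEM$. Since $\omega_\cM$ is extracted from the braiding $e$ on $\cM$, and since this braiding is itself built out of the $R$-matrix of $H$ and the $K$-matrix of $B$ as recalled in \S\ref{sec:qtcomodalg}, this reduces to a Sweedler-calculus computation with the element $K \in H \otimes B$.

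Combining the two steps, the morphism $\theta_\cM = (\ev_{\tEC} \otimes \id)(\id \otimes \omega_\cM) : H \to B^{*}$ unfolds into precisely the pairing built from $R$ and $K$ that Definition~\ref{def:thetaB} uses to define $\theta_B$, whence $\theta_\cM = \theta_B$ under the identifications fixed in the first step. The hardest part of the argument is exactly this bookkeeping: getting the explicit form of $\tEM$ as an algebra in $H\text{-}\mathsf{FdMod}$ correct, and then unwinding the universal property of the end against the $H$-coaction on $B$ so that the dinatural transformations line up with the formula for $\theta_B$. Once this matching is verified, the theorem is immediate. As a sanity check, specializing to $B = H$ with $K = R_{21}R$ recovers the classical equivalence of factorizability and nondegeneracy for the regular module category $(\cC, \otimes)$, which was already noted in Example~\ref{ex:fact-reg}.
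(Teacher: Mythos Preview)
Your overall strategy is exactly the paper's: identify $\tEC^*$ and $\tEM$ explicitly as $H$-modules, compute $\omega_\cM$ and hence $\theta_\cM$ in these terms, and compare with the map $\theta_B$ of Definition~\ref{def:thetaB}. However, several of the explicit identifications you propose are incorrect, and with them the computation would not go through.

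First, the end $\tEC=\int_{X}X\otimes X^*$ in $H\text{-}\mathsf{FdMod}$ is $H$ with the adjoint action (Corollary~\ref{cor:E_C-Hopf}), not $H^*$; consequently $\tEC^*\cong H^*$, not $H$. Second, the internal Hom for $\cM=B\text{-}\mathsf{FdMod}$ is $\underline{\Hom}(M,N)=\Hom_B(H\triangleright M,N)$ (Lemma~\ref{lem:shirel-iHom}), not $\Hom_\kk(M,N)$; accordingly $\tEM$ is the subspace $E(H,B)\subset\Hom_\kk(H,B)$ of Proposition~\ref{prop:E_M-Hopf}, which has $\kk$-dimension $\dim_\kk H$ (Lemma~\ref{lem:theta}), not $B^*$. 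Third, once one carries out the computation correctly (Propositions~\ref{prop:omegaB} and~\ref{prop:thetaBmod}), one finds $\theta_{B\text{-}\mathsf{FdMod}}=\theta_B\circ S_{H^*}$ rather than $\theta_\cM=\theta_B$ on the nose; the antipode twist is harmless for the conclusion since $S_{H^*}$ is bijective, but it does show that your claimed equality would fail. With these three corrections your outline becomes precisely the paper's proof.
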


One supply of examples of factorizable comodule algebras pertain to the reflective centers mentioned above. The reflective center in the Hopf case, $\cE_{H\text{-}\mathsf{FdMod}}(A\text{-}\mathsf{FdMod})$, for $A$ a left $H$-comodule algebra (not necessarily braided), is equivalent to a category, $R_H(A)\text{-}\mathsf{FdMod}$  \cite[Theorem~6.6]{laugwitz2023reflective}. Here, $R_H(A)$ is a finite-dimensional quasitriangular left $H$-comodule algebra, called a {\it reflective algebra}. This yields the family of factorizable comodule algebras, as presented below.

\begin{proposition}[Proposition~\ref{prop:RHA-fact}]
Let $H$ be a finite-dimensional quasitriangular Hopf algebra. 
Then, the reflective algebra $R_H(A)$ is a factorizable left $H$-comodule algebra when it is $H$-simple. 
\qed
\end{proposition}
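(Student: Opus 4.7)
The plan is to reduce the comodule-algebra statement to a categorical claim via Theorem~\ref{thm:nondeg-comodules}, and then attack that claim using Theorem~\ref{thm:FactNondeg} together with the universal property of the reflective center.

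Set $\cC := H\text{-}\mathsf{FdMod}$, $\cN := A\text{-}\mathsf{FdMod}$, and $\cM := R_H(A)\text{-}\mathsf{FdMod}$. By \cite[Theorem~6.6]{laugwitz2023reflective}, $\cM \simeq \cE_\cC(\cN)$ as braided left $\cC$-module categories, so Theorem~\ref{thm:nondeg-comodules} reduces factorizability of the comodule algebra $R_H(A)$ to nondegeneracy of $\cM$ as a braided $\cC$-module category. Before invoking that machinery, I would verify Hypothesis~\ref{hyp:Sec4} for $\cM$: the $H$-simplicity of $R_H(A)$ is precisely what ensures $\cM$ is nonzero, indecomposable, exact, and finite, via the standard correspondence between $H$-simple left $H$-comodule algebras and exact indecomposable $\cC$-module categories. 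By Theorem~\ref{thm:FactNondeg}, nondegeneracy of $\cM$ is then equivalent to the functor $G_\cM$ being an equivalence of braided $\cC$-module categories.

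Through the identification $\cM \simeq \cE_\cC(\cN)$, showing $G_\cM$ is an equivalence becomes a module-theoretic "absorption" statement,
\[
\cE_\cC(\cE_\cC(\cN)) \;\simeq\; \cE_\cC(\cN) \; \boxtimes \; \mathsf{Fun}_{\cC|}(\cE_\cC(\cN), \cE_\cC(\cN)),
\]
to be realized by $G_{\cE_\cC(\cN)}$. This is the module-theoretic analogue of the classical fact that the Drinfeld center $\cZ(\cC)$ of any finite tensor category is factorizable, giving $\cZ(\cZ(\cC)) \simeq \cZ(\cC) \boxtimes \overline{\cZ(\cC)}$. To build a quasi-inverse to $G_{\cE_\cC(\cN)}$, I would combine the universal property of $\cE_\cC(\cN)$ with the monadicity result (Proposition~\ref{prop:monad}) applied to $\cM$: the latter identifies $\mathsf{Fun}_{\cC|}(\cM, \cM)$ with a category of modules over the end $\tEM \in \cC$, giving the algebraic control needed to split a "double half-braiding" on an object of $\cE_\cC(\cE_\cC(\cN))$ into an object of $\cE_\cC(\cN)$ paired with a $\cC$-module endofunctor of $\cE_\cC(\cN)$.

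The main obstacle will be verifying that this candidate splitting is compatible with the braided $\cC$-module structure on both sides, and that $G_{\cE_\cC(\cN)}$ is indeed the realizing functor rather than some twisted variant. The key combinatorial point is the interaction between the half-braiding $e^M$ on an object $M \in \cM$ and the $\cC$-module-functor constraint $s$ on $F \in \mathsf{Fun}_{\cC|}(\cM, \cM)$, which must together assemble into a single half-braiding on $F(M)$ that defines a genuine object of $\cE_\cC(\cM)$. Once this compatibility is handled, essential surjectivity and fully faithfulness should follow by standard end/coend manipulations along with the exactness and indecomposability of $\cM$ guaranteed by the $H$-simplicity hypothesis.
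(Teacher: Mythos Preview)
The paper's proof is entirely different and much shorter: it is a direct injectivity check on the map $\theta_{R_H(A)}$ from Definition~\ref{def:thetaB}, with no appeal to Theorems~\ref{thm:nondeg-comodules} or~\ref{thm:FactNondeg} at all. Once $H$-simplicity guarantees Hypothesis~\ref{hyp:fact}, Lemma~\ref{lem:theta} reduces factorizability to injectivity of $\theta_{R_H(A)}: H^* \to E(H, R_H(A))$. The $K$-matrix of $R_H(A)$ is explicitly $K_{\textnormal{ref}}(A) = h_k \otimes h^k$ by Theorem~\ref{thm:LWYmain}(b), so
\[
[\theta_{R_H(A)}(f)](h) \;=\; \langle f,\, S(h_{(1)})\, h_k\, h_{(2)} \rangle\, h^k,
\]
and evaluating at $h = 1_H$ returns $\langle f, h_k \rangle\, h^k = f$ inside $(\widehat{H}^*)^{\textnormal{op}} \subset R_H(A)$ via~\eqref{eq:dualbasis}. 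Hence $\theta_{R_H(A)}(f) = \theta_{R_H(A)}(f')$ forces $f = f'$, and the proof is finished.

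Your categorical route is coherent in outline and the analogy with factorizability of $\cZ(\cC)$ is the right intuition, but as written it has a genuine gap: the ``absorption'' equivalence
\[
\cE_\cC(\cE_\cC(\cN)) \;\simeq\; \cE_\cC(\cN) \,\boxtimes\, \mathsf{Fun}_{\cC|}(\cE_\cC(\cN), \cE_\cC(\cN))
\]
is precisely the content of the proposition restated categorically, and you have not supplied an argument for it. The paragraph about ``splitting a double half-braiding'' via monadicity never actually defines the candidate splitting, and the assertion that essential surjectivity and full faithfulness ``should follow by standard end/coend manipulations'' is exactly where the entire difficulty lives --- the analogous statement for $\cZ(\cC)$ (e.g.\ \cite[Proposition~8.6.3]{etingof2015tensor}) is not a triviality either. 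Even if your program can be completed, it would be substantially more work than the paper's two-line Hopf-algebraic computation, which exploits the explicit ``universal'' form of the $K$-matrix rather than any abstract universal property of the reflective center.
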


\smallskip

Some questions for further investigation are included here; see, for instance, Questions~\ref{ques:JFR} and~\ref{ques:conv}.
In general, we anticipate that the notions of nondegeneracy here will play a role in the numerous applications of braided module categories mentioned at the beginning of the section.


\section{Preliminaries on monoidal categories and their braidings}

Throughout our work, all linear structures will be over an algebraically closed field $\Bbbk$. In this section, we provide background material on categories [$\S$\ref{ss:cats}], on co(end)s for categories [$\S$\ref{ss:coend}], on monoidal categories [$\S$\ref{ss:monoidal}], on braided finite tensor categories [$\S$\ref{ss:btc}], on coend Hopf algebras in braided finite tensor categories [$\S$\ref{ss:coendBTC}], and on nondegenerate tensor categories [$\S$\ref{subsec:nondeg-monoidal}].

\subsection{Preliminaries on categories}  \label{ss:cats}
See \cite{mitchell1965categories, mac1998categories,walton2024} for a review of $\kk$-linear abelian  categories, and refer to \cite[$\S\S$1.8, 1.11]{etingof2015tensor} for details of the material below.

\subsubsection{Deligne product of  categories} Let $\cA$, $\cA'$ be $\Bbbk$-linear abelian categories. The {\it Deligne product} of $\cA$ and $\cA'$ is a $\kk$-linear abelian category $\cA \boxtimes \cA'$ endowed with a functor $\boxtimes : \cA \times \cA' \rightarrow \cA\boxtimes \cA'$ that is $\Bbbk$-linear and right exact in each variable, and is universal among such functors out of $\cA \times \cA'$. When the Deligne product exists, we have the following natural isomorphism:
\begin{equation*} \label{eq:deligneHom}
\Hom_{\cA  \boxtimes \cA'}(X \boxtimes X', Y \boxtimes Y') \; \cong \; \Hom_{\cA}(X, Y) \otimes_\kk \Hom_{\cA'}(X', Y').
\end{equation*}
for $X,Y \in \cA$ and $X',Y' \in \cA'$.

\subsubsection{Finiteness}  A $\Bbbk$-linear abelian category $\cA$ is {\it locally finite} if $\Hom_{\cA}(V,W)$ is a finite-dimensional $\Bbbk$-vector space for each $V,W \in \cA$, and every object has a finite filtration by simple objects. 

\smallskip

A locally finite category $\cA$ is {\it finite} if there are enough projectives and finitely many isomorphism classes of simple objects. Equivalently, a $\kk$-linear category $\cA$ is finite if it is equivalent to the category of finite-dimensional modules over a finite-dimensional $\kk$-algebra.
For example, the category $\mathsf{FdVec}$ of finite-dimensional $\kk$-vector spaces is finite and abelian.
Moreover, the Deligne product of finite abelian categories exists and is finite. 

\begin{hypothesis} \label{hyp:cat}
Unless stated otherwise, we assume the following conditions.
\begin{enumerate}[(a)]
\item All categories here are $\kk$-linear and abelian (and will often be finite as specified below). 
\smallskip
\item Given two such categories $\cA$ and $\cB$, any functor $\cA \to \cB$ is assumed to be linear and right exact; the category of such functors is denoted by $\mathsf{Fun}(\cA,\cB)$.
\smallskip
\item When we write $V \otimes_\kk X$, for $V \in \mathsf{FdVec}$ and $X \in \cA$,  we assume that $X$ has an underlying vector space structure via the equivalence of $\cA$ and a category of finite-dimensional modules over a finite-dimensional algebra over $\kk$.
\end{enumerate}
\end{hypothesis}


\subsection{(Co)ends for categories}  \label{ss:coend}
We refer to reader to \cite[$\S\S$IX.4--6]{mac1998categories} for the details here.

\smallskip

Take categories $\cS$ and $\cT$, with functors $F_1, F_2: \cS^{\textnormal{op}} \times \cS \to \cT$. A {\it dinatural transformation} from $F_1$ to $F_2$ is a collection of morphisms $\delta:=\{\delta_{\cS}(S): F_1(S,S) \to F_2(S,S)\}_{S \in \cS}$,
such that  $F_2(S,f) \circ \delta_{\cS}(S) \circ F_1(f,S) = F_2(f,S') \circ \delta_{\cS}(S') \circ F_1(S',f)$ for each morphisms $f: S \to S'$ in $\cS$. We denote this dinatural transformation by $\delta: F_1 \overset{\hbox{\Large $..$}}{\to} F_2$.

\smallskip

For $T \in \cT$, let $K_T: \cS^{\textnormal{op}} \times \cS \to \cT$ be the {\it constant functor} with $K_T(S, S') =: T$, for all $S,S' \in \cS$.

\smallskip

The {\it end} of a functor $F: \cS^{\textnormal{op}} \times \cS \to \cT$ is an object $\mathtt{E} \in \cT$ equipped with a dinatural transformation $\pi: K_{\mathtt{E}} \overset{\hbox{\Large $..$}}{\to} F$, such that for any $E \in \cT$ and any  $\delta: K_E \overset{\hbox{\Large $..$}}{\to} F$, there is a unique morphism $f: E \to \mathtt{E}$  with $\pi_{\cS}(S) \circ f = \delta_{\cS}(S)$ for all $S \in \cS$. We denote $(\mathtt{E}, \pi)$ by $\textstyle \int_{S \in \cS} F(S,S)$.

\smallskip

The {\it coend} of a functor $F: \cS^{\textnormal{op}} \times \cS \to \cT$ is an object $\mathtt{C} \in \cT$ equipped with a dinatural transformation $\iota: F \overset{\hbox{\Large $..$}}{\to} K_{\mathtt{C}}$, such that for any $C \in \cT$ and any $\delta: F \overset{\hbox{\Large $..$}}{\to} K_C$, there is a unique morphism $f: \mathtt{C} \to C$  with $f \circ \iota_{\cS}(S) = \delta_{\cS}(S)$ for all $S \in \cS$. We denote $(\mathtt{C}, \iota)$ by $\textstyle \int^{S \in \cS} F(S,S)$.

\smallskip

When (co)ends exist, they commute with each other, and with other (co)limits in $\cT$. Moreover, right (resp., left) adjoints preserve limits (resp., colimits). So, 
$R\left(\textstyle \int_{S \in \cS} F(S,S)\right) \cong   \textstyle \int_{S \in \cS} RF(S,S)$ and  $L\left(\textstyle \int^{S \in \cS} F(S,S)\right) \cong   \textstyle \int^{S \in \cS} LF(S,S)$, for  an adjunction $(L: \cT \to \cT') \dashv (R: \cT' \to \cT)$.

\begin{proposition}\label{prop:coend}  \cite[Lemma~3.2]{shimizu2017relative}
For finite categories $\cA$ and $\cB$, we have an equivalence of categories, with quasi-inverse, respectively:
\[
\begin{array}{ll}
\Phi: \cA^{\textnormal{op}} \boxtimes \cB \to \mathsf{Fun}(\cA,\cB), & 
A \boxtimes B \mapsto \Hom_{\cA}(-,A)^* \otimes_\kk B,\\[.6pc]
\Psi: \mathsf{Fun}(\cA,\cB) \to \cA^{\textnormal{op}} \boxtimes \cB, & 
F \mapsto \textstyle \int_{C \in \cA} C \boxtimes F(C).
\end{array}
\] 
In particular, we get the following identities in $\cA^{\textnormal{op}} \boxtimes \cB$ and in $\mathsf{Fun}(\cA,\cB)$, respectively:
\begin{eqnarray}
A \boxtimes B &\cong& \textstyle \int_{C \in \cA} C \boxtimes (\Hom_{\cA}(C,A)^*  \otimes_\kk B), \label{eq:coend-1}\\[.2pc]
F(-) &\cong & \textstyle \int_{C \in \cA} \Hom_{\cA}(-,C)^*  \otimes_\kk F(C), \label{eq:coend-2}
\end{eqnarray}
for each $A \in \cA^{\textnormal{op}}$, $B \in \cB$, and $F \in\mathsf{Fun}(\cA,\cB)$. 
\qed
\end{proposition}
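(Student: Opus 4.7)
My plan is to prove the two explicit natural isomorphisms \eqref{eq:coend-1} and \eqref{eq:coend-2} directly, and then observe that these are precisely the statements $\Psi \circ \Phi \cong \id$ and $\Phi \circ \Psi \cong \id$; the equivalence of categories follows formally. Before that, I need to check that $\Phi$ and $\Psi$ are well-defined. For $\Phi$, the assignment $(A,B) \mapsto \Hom_{\cA}(-,A)^* \otimes_{\kk} B$ is $\kk$-bilinear and right exact in each variable (since $\Hom_{\cA}(-,A)$ is left exact in $A$, so $\Hom_{\cA}(-,A)^*$ is right exact in $A \in \cA^{\textnormal{op}}$; and $(-)\otimes_{\kk} B$ is exact on $\FdVec$), so by the universal property of $\boxtimes$ it factors through $\cA^{\textnormal{op}} \boxtimes \cB$. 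For $\Psi$, the finite abelian category $\cA^{\textnormal{op}} \boxtimes \cB$ has all finite limits, and since $\cA$ is finite (equivalently, can be replaced by $A\text{-}\mathsf{mod}$ for a fd $\kk$-algebra), the end $\int_{C \in \cA} C \boxtimes F(C)$ reduces to a finite equalizer built from a projective generator of $\cA$, hence exists.

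The key step is \eqref{eq:coend-2}, a co-Yoneda/density formula for right exact $\kk$-linear functors between finite abelian categories. I would prove it by invoking the Eilenberg--Watts-style description: write $\cA \simeq A\text{-}\mathsf{mod}$ and $\cB \simeq B\text{-}\mathsf{mod}$ for fd $\kk$-algebras, so that $F \cong M \otimes_A (-)$ for a unique $B\text{-}A$-bimodule $M$. The identity then reduces to checking $M \otimes_A X \;\cong\; \int_{C \in A\text{-}\mathsf{mod}} \Hom_A(X, C)^* \otimes_{\kk} (M \otimes_A C)$, which in turn follows from the standard fact $\int_{C} \Hom_A(X,C)^* \otimes_{\kk} C \cong X$ (i.e., the ``trace end'' recovers the identity functor on a finite abelian category) together with the right exactness of $M \otimes_A (-)$, which commutes with the end up to natural isomorphism. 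One can verify the trace-end formula by testing against $\Hom_{\cA}(-, Y)$ and using that representable functors convert ends to ends of $\kk$-vector spaces, where the claim is an elementary dimension count over a projective generator.

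For \eqref{eq:coend-1}, I would either derive it as the image of \eqref{eq:coend-2} under the equivalence $\Phi$ (once the other direction of the equivalence is in hand), or argue directly: the right-hand side $\int_C C \boxtimes (\Hom_{\cA}(C,A)^* \otimes_{\kk} B)$ is the image under $\Psi$ of the functor $\Phi(A \boxtimes B) = \Hom_{\cA}(-,A)^* \otimes_{\kk} B$, and so its evaluation against any $\Hom_{\cA^{\textnormal{op}} \boxtimes \cB}(-, A' \boxtimes B')$ reduces via the bilinear Hom identity for $\boxtimes$ to the trace-end formula applied in each tensor factor, yielding a natural isomorphism with $A \boxtimes B$. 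Assembling the pieces: \eqref{eq:coend-1} gives $\Psi\Phi(A \boxtimes B) \cong A \boxtimes B$ naturally on generators, and \eqref{eq:coend-2} gives $\Phi\Psi(F) \cong F$ pointwise and naturally (using that $\Phi$, being $\kk$-linear and right exact in each variable, commutes with the end defining $\Psi(F)$ up to the canonical comparison).

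The main obstacle I anticipate is the careful handling of the end $\int_{C \in \cA}$ in a non-small category: one must verify that the formal definition of the end reduces to a genuine finite (co)limit computation in the finite setting. Once this reduction is in place via a projective generator of $\cA$ and the Eilenberg--Watts identification of $\mathsf{Fun}(\cA,\cB)$ with a bimodule category, the rest is essentially the Yoneda/density lemma rewritten through $\boxtimes$ and $(-)^*$.
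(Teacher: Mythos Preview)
The paper does not give its own proof of this proposition: it is stated with a terminal \qed and attributed to \cite[Lemma~3.2]{shimizu2017relative}, so there is no in-paper argument to compare your proposal against.

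Your outline is a plausible route to a proof, and your final paragraph correctly identifies the real work (reducing the end over the non-small category $\cA$ to a finite limit via a projective generator). One step in the body is not justified as written, however: you assert that the right exactness of $M \otimes_A (-)$ lets it ``commute with the end up to natural isomorphism.'' Ends are limits, and right exact functors have no reason to preserve them; this inference is false in general. What is actually true in the finite setting is that the end $\int_{C} \Hom_A(X,C)^* \otimes_\kk G(C)$ can be computed from a single projective generator $P$ as an equalizer involving $\End_A(P)$, and any $\kk$-linear functor (regardless of exactness) preserves that particular finite diagram because it is built from finite direct sums and $\kk$-linear idempotent data. So the conclusion you want does hold, but for a different reason than the one you cite. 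If you rewrite that step to invoke $\kk$-linearity and the finite presentation of the end (which you already flag as the main obstacle), the sketch becomes sound.
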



\subsection{Monoidal categories} \label{ss:monoidal}
See various parts of \cite{etingof2015tensor, walton2024} for details on the material here.

\subsubsection{Monoidal categories} A {\it monoidal category} consists of a category $\cC$ equipped with  a bifunctor $\otimes\colon  \cC \times \cC \to \cC$,  a natural isomorphism $a_{X,Y,Z}\colon  (X \otimes Y) \otimes Z \overset{\sim}{\to} X \otimes (Y \otimes Z)$ for $X,Y,Z \in \cC$, an object $\one \in \cC$, and  natural isomorphisms $l_X\colon  \one \otimes X \overset{\sim}{\to} X$ and $r_X\colon  X  \otimes \one \overset{\sim}{\to} X$ for  $X \in \cC$, 
such that the pentagon and triangle axioms hold. 
Unless stated otherwise, we use MacLane's strictness theorem to assume that all monoidal categories are {\it strict} in the sense that 
$a_{X,Y,Z},\; l_X,\; r_X$ are identity maps.

\smallskip

A {\it lax monoidal functor}  between monoidal categories $(\cC, \otimes, \one)$ and $(\cC', \otimes', \one')$ is a functor \linebreak $F \colon \cC \to \cC'$ equipped with a natural transformation $F_{X,Y}\colon  F(X) \otimes' F(Y) \to F(X \otimes Y)$ for all $X,Y \in \cC$, and  an isomorphism $F_0\colon  \one' \to F(\one)$ in $\cC'$, 
that satisfy the associativity and unitality constraints; it is {\it (strong) monoidal} when $\{F_{X,Y}\}_{X,Y \in \cC}$ and $F_0$ are isomorphisms.
An {\it equivalence (resp., isomorphism) of  monoidal categories} is  a  monoidal functor that yields an equivalence (resp., isomorphism) of the underlying categories; it is denoted by $\overset{\otimes}{\simeq}$ (resp., $\overset{\otimes}{\cong}$).

\smallskip

Given a monoidal category $(\cC, \otimes, \one, a, l, r)$ (in the non-strict case for clarity), its {\it opposite monoidal category} is defined as  $\cC^{\otimes \textnormal{op}}:=(\cC, \otimes^{\textnormal{op}}, \one, a^{\textnormal{op}}, l^{\textnormal{op}}, r^{\textnormal{op}})$, with $X \otimes^{\textnormal{op}} Y := Y \otimes X$ and $a^{\textnormal{op}} _{X,Y,Z} := a^{-1}_{Z,Y,X}$ and $l_X^{\textnormal{op}} = r_X$  and $r_X^{\textnormal{op}} = l_X$, for all $X,Y,Z \in \cC$.

\subsubsection{Algebraic structures in monoidal categories} An {\it algebra} in a monoidal category $(\cC, \otimes, \one)$ is an object $A \in \cC$, equipped with morphisms, $m_A: A \otimes A \to A$ and $u_A: \one \to A$, subject to associativity and unitality axioms. These form a category, $\mathsf{Alg}(\cC)$, where from morphisms $(A,m_A,u_A)$ to $(A',m_{A'},u_{A'})$ are morphisms $A \to A'$ in $\cC$ compatible with $m_A, m_{A'}$ and with $u_A, u_{A'}$. 

\smallskip

Dually, a {\it coalgebra} in a monoidal category $(\cC, \otimes, \one)$ is a triple $(C, \Delta, \varepsilon)$, where $C \in \cC$, and $\Delta_C: C \to C \otimes C$, $\varepsilon_C: C \to \one$ are morphisms in $\cC$ satisfying coassociativity and counitality axioms. Likewise, the category $\mathsf{Coalg}(\cC)$ can be formed by defining morphisms in a dual fashion.

\smallskip

 Take $A \in \mathsf{Alg}(\cC)$.  A {\it left $A$-module} in $\cC$ is an object $M \in \cC$, equipped with a  morphism \linebreak $\hbox{\scriptsize{$\vartriangleright$}}_M: A \otimes M \to M$, subject to associativity and unitality axioms. These form $A$-$\mathsf{Mod}(\cC)$, a category where morphisms $(M, \hbox{\scriptsize{$\vartriangleright$}}_M) \to (M', \hbox{\scriptsize{$\vartriangleright$}}_{M'})$ are maps $M \to M'$ in $\cC$ compatible with $\hbox{\scriptsize{$\vartriangleright$}}_M$ and $ \hbox{\scriptsize{$\vartriangleright$}}_{M'}$.
 
\smallskip

 Likewise, the category, $\mathsf{Mod}$-$A(\cC)$, consisting of right $A$-modules in~$\cC$ can be defined. There is also a  category, $A$-$\mathsf{Bimod}(\cC)$, of $A$-bimodules in~$\cC$, whose objects are triples $(M, \hbox{\scriptsize{$\vartriangleright$}}_M, \hbox{\scriptsize{$\vartriangleleft$}}_M)$ with $(M, \hbox{\scriptsize{$\vartriangleright$}}_M) \in A$-$\mathsf{Mod}(\cC)$ and $(M, \hbox{\scriptsize{$\vartriangleleft$}}_M) \in \mathsf{Mod}$-$A(\cC)$, subject to a compatibility condition.

\subsubsection{Rigidity} A monoidal category $(\cC, \otimes, \one)$ is {\it rigid} if it comes equipped with left and right dual objects,  i.e., for each $X \in \cC$ there exist, respectively, an object $X^* \in \cC$ with co/evaluation maps $\ev_X^L \colon  X^* \otimes X \to \one$ and $\coev_X^L \colon  \one \to  X \otimes X^*$, and an object ${}^*X \in \cC$ with co/evaluation maps $\ev_X^R \colon  X \otimes {}^*X \to \one$, $\coev_X^R \colon \one \to  {}^*X \otimes X$,  satisfying  coherence conditions.

\smallskip

In a rigid category, duals of morphisms can be formed. For instance, for $f: X \to Y \in \cC$, its left dual defined by $f^*:= (\ev_Y^L \otimes \id_{X^*})(\id_{Y^*} \otimes f \otimes \id_{X^*})(\id_{Y^*} \otimes \coev^L_X): Y^* \to X^*$. Moreover, we get
\begin{equation} \label{eq:dualmor}
\ev_Y^L\circ (\id_{Y^*} \otimes f) = \ev_X^L \circ (f^* \otimes \id_X), \qquad 
\ev_Y^R \circ (f \otimes \id_{{}^* Y}) = \ev_X^R \circ (\id_X \otimes {}^* \hspace{-.02in} f).
\end{equation}

Moreover, for $(A,m,u) \in \mathsf{Alg}(\cC)$, we obtain that $(A^*,m^*,u^*) \in \mathsf{Coalg}(\cC)$. On the other hand, for $(C,\Delta,\varepsilon) \in \mathsf{Coalg}(\cC)$, we obtain that $(C^*,\Delta^*,\varepsilon^*) \in \mathsf{Alg}(\cC)$.

\subsubsection{(Multi-)tensor and (multi-)fusion categories} A  {\it multitensor category} is an abelian, locally finite, rigid, monoidal category $(\cC, \otimes, \one)$ such that $\otimes$ is $\Bbbk$-linear in each slot; it is {\it tensor} if, further, $\End_\cC(\one) \cong \Bbbk$. A  (multi)tensor category is  {\it (multi)fusion} if it is both finite and semisimple.

\smallskip

For instance, $\mathsf{FdVec}$ is a fusion category, with $\otimes := \otimes_\kk$ and $\one := \kk$.
If $\cC$ is a tensor (resp., finite tensor, fusion) category, then is so $\cC^{\otimes \textnormal{op}}$.

\smallskip

 A {\it tensor functor} is a $\Bbbk$-linear, exact, faithful, monoidal functor $F$ between  tensor (or fusion) categories $\cC$ and $\cC'$, with $F(\one) = \one'$.

\subsubsection{Deligne product of various monoidal categories } If $\cC$ and $\cC'$ are two monoidal categories, then the Deligne product of underlying categories $\cC \boxtimes \cC'$, if it exists, is also monoidal. 
Moreover, the Deligne tensor product of two  tensor (resp., finite tensor, fusion) categories exists and is a tensor (resp.,  finite tensor, fusion) category; see \cite[\S4.6]{etingof2015tensor}.


\subsection{Braided finite tensor categories}  \label{ss:btc}
See \cite[$\S\S$8.1--8.5]{etingof2015tensor} and \cite[$\S\S$3.1, 5.1, 6.2]{turaev2017monoidal} for more information on the material below. Let $\cC:=(\cC, \otimes, \one)$ be a finite tensor category here.

\subsubsection{Braided categories} \label{ss:br-cat} A finite tensor category $\cC:=(\cC, \otimes, \one)$ is {\it braided} if it is a equipped with  a natural isomorphism $c:=\{c_{X,Y}\colon  X \otimes Y \overset{\sim}{\to} Y \otimes X\}_{X,Y \in \cC}$ ({\it braiding}), such that the following hexagon axioms hold for each $X,Y,Z \in \cC$: 
\begin{align}
c_{X \otimes Y, Z} &  =  (c_{X,Z} \otimes \id_Y) (\id_X \otimes c_{Y,Z}), \label{eq:braid1} \\[.4pc]
c_{X, Y \otimes Z} & = (\id_Y \otimes c_{X,Z}) (c_{X,Y} \otimes \id_Z).  \label{eq:braid2} 
\end{align}
As a consequence, we obtain that for each $X \in \cC$:
\begin{align}
c_{\one, X} =  c_{X, \one} = \id_X. \label{eq:braid-unit}
\end{align}

We also have a {\it mirror braiding} on $\cC$ given by 
$c^{-1}:=\{c_{Y,X}^{-1}\colon X \otimes Y \overset{\sim}{\to} Y \otimes X\}_{X,Y \in \cC}$. 
We have a corresponding  braided finite tensor category, $\overline{\cC}:=(\cC,  c^{-1})$,
and call it the {\it mirror} of $(\cC, c)$.

\smallskip

A {\it braided tensor functor} between braided finite tensor categories $\cC$ and $\cC'$ is a tensor functor \linebreak $(F, F_{-,-},F_0)\colon  \cC \to \cC'$ such that 
$F_{Y,X} \circ c'_{F(X),F(Y)} = F(c_{X,Y}) \circ  F_{X,Y}$,
for all $X,Y \in \cC$. An {\it equivalence (resp., isomorphism) of  braided finite tensor categories} is a braided tensor functor that yields an equivalence (resp., isomorphism) of the underlying categories. 

\smallskip

Given two braided finite tensor categories $(\cC,c)$ and $(\cC', c')$, the Deligne product of the underlying tensor categories $\cC \boxtimes \cC'$ is also braided.

\subsubsection{Algebraic structures in braided finite tensor categories} 
A {\it bialgebra} in  $(\cC, \otimes, \one, c)$ is a tuple \linebreak $(A,m_A,u_A,\Delta_A, \varepsilon_A)$, where 
$(A,m_A,u_A) \in \mathsf{Alg}(\cC)$, and $(A,\Delta_A,\varepsilon_A) \in \mathsf{Coalg}(\cC)$, with $\Delta_A, \varepsilon_A \in \mathsf{Alg}(\cC)$. Here, $A \otimes A \in \mathsf{Alg}(\cC)$, where $m_{A \otimes A} = (m_A \otimes m_A)(\id_A \otimes c_{A,A} \otimes \id_A)$ and $u_{A \otimes A} := u_A \otimes u_A$.

\smallskip

A {\it Hopf algebra} in  $(\cC, \otimes, \one, c)$ is a tuple $(A,m_A,u_A,\Delta_A, \varepsilon_A, S_A)$, where $(A,m_A,u_A,\Delta_A, \varepsilon_A)$ is a bialgebra in $\cC$, and  $S_A: A \to A$ in $\cC$ satisfying
$m_A(S_A \otimes \id_A)\Delta_A = u_A \varepsilon_A = m_A(\id_A \otimes S_A)\Delta_A$.

\subsubsection{Drinfeld centers}\label{sec:Z(C)}  An important example of a braided finite tensor category is the {\it Drinfeld center}  $\cZ(\cC)$ of a finite  tensor category $\cC$. Its objects are pairs $(V, c^V)$, where $V$ is an object of $\cC$, and $c^V:=\{c^V_X \colon  X \otimes V \overset{\sim}{\to} V \otimes X\}_{X \in \cC}$ is a natural isomorphism ({\it half-braiding}), satisfying the condition
$c^V_{X \otimes Y} = (c^V_X \otimes \id_{Y})  (\id_{X} \otimes c^V_Y)$.

\smallskip

Morphisms $(V, c^V) \to (W, c^W)$ of $\cZ(\cC)$ are  $f \in \Hom_{\cC}(V,W)$  such that $(f \otimes \id_X)  c^V_X = c^W_X  (\id_X \otimes f)$, for all $X \in \cC$. The monoidal product of $\cZ(\cC)$ is $(V,c^V) \otimes (W,c^W):=(V\otimes W, c^{V \otimes W})$,   where we define  $c^{V\otimes W}_X :=  (\id_V \otimes c^W_X)(c^V_X \otimes \id_W)$, for all $X \in \cC$.
The braiding of $\cZ(\cC)$ is defined by 
$c_{(V,c^V),(W,c^W)}:= c^W_V:  (V \otimes W,c^{V \otimes W}) \to (W \otimes V,c^{W \otimes V})$.

\subsubsection{Embeddings into Drinfeld centers in the braided case}
Given a braided finite tensor category $(\cC,c)$, let $\overline{\cC}:=(\cC,c^{-1})$ be its mirror. 
Then, we have the following fully faithful, braided tensor functors from  $\cC$ to its Drinfeld center $\cZ(\cC)$:
\begin{equation} \label{eq:G+G-}
G^+_\cC: \cC \to \cZ(\cC), \; V \mapsto (V, c^V) \quad \quad \quad G^-_\cC: \overline{\cC} \to \cZ(\cC), \; V \mapsto (V, (c^{-1})^V).
\end{equation}


\subsection{Coend Hopf algebras in braided finite tensor categories} \label{ss:coendBTC}
Take the coend object of~$\cC$:
\[ 
\mathtt{C}_{\cC}:= \textstyle \int^{X \in \cC} X^* \otimes X.
\]
By its universal construction, $\mathtt{C}_{\cC}$ is attached to canonical morphisms: 
$\{\iota_{\cC}(X): X^* \otimes X \to  \mathtt{C}_{\cC}\}_{X \in \cC}$.

\begin{lemma}\cite[$\S\S$6.4--6.5]{turaev2017monoidal} \label{lem:EcHopfAlg}
When $\cC$ is a braided finite tensor category, the coend object $\mathtt{C}_{\cC}$ exists. Further, we have the following statements.
\begin{enumerate}[\upshape (a)]
\item  $\mathtt{C}_{\cC}$ is a Hopf algebra in $\cC$. For $X \in \cC$, the operations are given by:
{\small
\[
\begin{array}{rl}
m_{\mathtt{C}_{\cC}} \bigl(\iota_{\cC}(X) \otimes \iota_{\cC}(Y)\bigr) & = \; \iota_{\cC}(X \otimes Y) \; \bigl(c_{X^*,Y^*} \otimes \id_X \otimes \id_Y\bigr) \bigl(\id_{X^*} \otimes c_{X,Y^*} \otimes \id_Y\bigr),\\[.2pc]
u_{\mathtt{C}_{\cC}} & = \;  \iota_{\cC}(\one),\\[.2pc]
\Delta_{\mathtt{C}_{\cC}} \; \iota_{\cC}(X) &= \; \bigl(\iota_{\cC}(X) \otimes \iota_{\cC}(X)\bigr) \bigl(\id_{X^*} \otimes \coev^L_X \otimes \id_X\bigr), \\[.2pc]
\varepsilon_{\mathtt{C}_{\cC}} \; \iota_{\cC}(X) &= \; \ev^L_X,\\[.2pc]
S_{\mathtt{C}_{\cC}} \; \iota_{\cC}(X) &= \bigl(\ev^L_X \otimes \id_{{\tt C}_{\cC}}\bigr)  \bigl(\id_{X^*} \otimes c^{-1}_{{\tt C}_{\cC}, X}\bigr)  \bigl(\id_{X^*} \otimes \iota_{\cC}(X^*) \otimes \id_X\bigr)  \bigl(\coev^L_{X^*} \otimes \id_{X^*} \otimes \id_X\bigr).
\end{array}
\]
}
 
\item  $\mathtt{C}_{\cC}$ comes equipped with a Hopf pairing $\omega_{\cC}: \mathtt{C}_{\cC} \otimes \mathtt{C}_{\cC} \to \one$,
that is defined by the following condition, for each $X,Y \in \cC$:
\begin{equation*} \label{eq:omegaC}
\omega_{\cC} \; \bigl(\iota_{\cC}(X) \otimes \iota_{\cC}(Y)\bigr) 
= (\ev^L_X \otimes \ev^L_Y)  \bigl(\id_{X^*} \otimes (c_{Y^*,X} \circ c_{X,Y^*}) \otimes \id_Y\bigr). 
\end{equation*}
\end{enumerate}

\vspace{-.3in}

\qed
\end{lemma}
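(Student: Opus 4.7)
The plan is to treat parts (a) and (b) in sequence, in each case using the universal property of the coend to define the relevant structure morphisms and then reducing the axioms to dinatural identities that can be verified on components $X^* \otimes X$.

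First, I would argue the existence of $\mathtt{C}_{\cC}$. Since $\cC$ is a finite tensor category it has a projective generator $P$, and the dinatural family $\{\iota_{\cC}(X): X^* \otimes X \to \mathtt{C}_{\cC}\}_{X \in \cC}$ can be obtained as a suitable coequalizer involving $P^* \otimes P$ and $P^* \otimes P \otimes \End_{\cC}(P)$; alternatively one invokes the general fact that coends of functors $\cC^{\op} \times \cC \to \cC$ of the form $(X,Y) \mapsto X^* \otimes Y$ exist in any finite tensor category. I would then record the key feature of the universal property: to define a morphism $f: \mathtt{C}_{\cC} \to Z$ is the same as to specify a dinatural family $\{f_X: X^* \otimes X \to Z\}_{X \in \cC}$, and similarly to define $g: \mathtt{C}_{\cC}^{\otimes n} \to Z$ it suffices (via the Fubini theorem for coends) to give a dinatural family $\{g_{X_1, \ldots, X_n}\}$ in $n$ variables. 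Throughout, commuting tensor products of coends past each other uses that $- \otimes -$ in $\cC$ is right exact and so preserves coends in each slot (as the left adjoint to $\iHom$).

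For (a), I would define each structure morphism by the formula in the statement applied to $\iota_{\cC}(X \otimes Y)$, $\iota_{\cC}(\one)$, $\iota_{\cC}(X) \otimes \iota_{\cC}(X)$, etc., and then check dinaturality in each variable. The multiplication $m_{\mathtt{C}_{\cC}}$ crucially uses the braiding to move $Y^*$ past $X$ so that the target has the right form $(X \otimes Y)^* \otimes (X \otimes Y)$; dinaturality in $X$ and $Y$ reduces to naturality of $c$ together with the identity \eqref{eq:dualmor} for duals of morphisms. The comultiplication and counit are essentially dual and only use the evaluation/coevaluation, and the antipode formula uses the braiding once. Verification of the Hopf axioms then proceeds by precomposing each axiom with $\iota_{\cC}(X)$ (or $\iota_{\cC}(X) \otimes \iota_{\cC}(Y)$, etc.) and manipulating; by the universal property it suffices to check equality of the resulting dinatural families, and each identity becomes a straightforward string-diagrammatic computation in the rigid braided category $\cC$. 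The hexagon axioms \eqref{eq:braid1}, \eqref{eq:braid2} together with naturality of $c$ are the only nontrivial ingredients; in particular the bialgebra compatibility $\Delta_{\mathtt{C}_{\cC}} \circ m_{\mathtt{C}_{\cC}} = (m_{\mathtt{C}_{\cC}} \otimes m_{\mathtt{C}_{\cC}}) (\id \otimes c_{\mathtt{C}_{\cC},\mathtt{C}_{\cC}} \otimes \id)(\Delta_{\mathtt{C}_{\cC}} \otimes \Delta_{\mathtt{C}_{\cC}})$ is exactly where the braiding axiom is used on both sides.

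For (b), I would use the Fubini theorem to identify $\mathtt{C}_{\cC} \otimes \mathtt{C}_{\cC}$ with $\int^{X,Y} X^* \otimes X \otimes Y^* \otimes Y$, and then define $\omega_{\cC}$ from the dinatural family specified in the statement. Dinaturality in $X$ and $Y$ reduces, once again via \eqref{eq:dualmor} and naturality of $c$, to an elementary check that the double braiding $c_{Y^*,X} \circ c_{X,Y^*}$ is natural in both arguments. Having defined $\omega_{\cC}$, I would verify it is a Hopf pairing by checking the two algebra/coalgebra compatibility diagrams (i.e., $\omega_{\cC}(m_{\mathtt{C}_{\cC}} \otimes \id) = \omega_{\cC}(\id \otimes \omega_{\cC} \otimes \id)(\id \otimes \id \otimes \Delta_{\mathtt{C}_{\cC}})$ and its dual) after precomposing with appropriate products $\iota_{\cC}(X) \otimes \iota_{\cC}(Y) \otimes \iota_{\cC}(Z)$; each reduces to an identity involving two braidings and the evaluation maps.

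The main obstacle is bookkeeping: every axiom must be translated to a dinatural statement and then verified via a string diagram computation involving braidings and duals. There is no conceptual novelty here—the constructions are fixed by universal property and the identities follow from rigidity plus the braid axioms—but the sheer number of strands to track (and the fact that $c$ appears squared in both the algebra structure and the pairing) makes it easy to misplace a crossing. I would organize the verification by recording for each axiom the string diagram of both sides and reducing using \eqref{eq:braid1}, \eqref{eq:braid2}, \eqref{eq:braid-unit}, and the snake identities for duals.
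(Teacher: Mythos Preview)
The paper does not prove this lemma at all: it is stated with a citation to \cite[\S\S6.4--6.5]{turaev2017monoidal} and closed with a \qed, so there is no in-house argument to compare against. Your outline---defining each structure morphism via the universal property of the coend (using Fubini for the multi-variable cases), then reducing each Hopf algebra and Hopf-pairing axiom to a dinatural identity checked by string diagrams in the rigid braided category---is exactly the standard construction carried out in the cited reference, and is correct in spirit and in its identification of the key ingredients (exactness of $\otimes$ to pass coends through tensor products, the hexagon axioms for the bialgebra compatibility, and the snake identities for the antipode and counit verifications).
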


Next, consider the following end objects in $\cC$:
\[ 
\mathtt{E}_{\cC}:= \textstyle \int_{X \in \cC} X \otimes X^*.
\]
By universality, $\mathtt{E}_{\cC}$ is attached to canonical morphisms in~$\cC$: 
$\{\pi_{\cC}(X): \; \mathtt{E}_{\cC} \to X \otimes X^*\}_{X \in \cC}$.
Further, 
\begin{align} \label{eq:EcCc}
 {\tt E}_\cC^* \; = \; \textstyle \bigl(\int_{X \in \cC}  X \otimes X^* \bigr)^*
\; \cong \; \textstyle \int^{X \in \cC}  &X^{**} \otimes X^* \; \cong \; \textstyle \int^{X \in \cC} X^{*} \otimes X \; = \;  {\tt C}_\cC,\\[.4pc]
 \label{eq:EcCc2}
(\pi_\cC(X))^* &:= \iota_\cC(X^*).
\end{align}
The right-dual versions of \eqref{eq:EcCc} and \eqref{eq:EcCc2} also hold, labeled as \eqref{eq:EcCc}$'$ and \eqref{eq:EcCc2}$'$, respectively.


\subsection{Nondegenerate tensor categories} 
\label{subsec:nondeg-monoidal}

The main reference for the content here is \cite{shimizu2019non}. See also \cite[\S8.6]{etingof2015tensor} and various parts of 
 \cite{kerler2001non, muger2003subfactors}.
There are several ways that a braided finite tensor category $\cC:=(\cC, \otimes, \one, c)$ could be nondegenerate; these notions are described as follows.

\smallskip

First, the fully faithful functors $G^+_\cC: \cC \to \cZ(\cC)$ and $G^-_\cC: \bar{\cC} \to \cZ(\cC)$ from \eqref{eq:G+G-} can be combined to form a functor: 
\[ G_\cC: \cC \boxtimes \bar{\cC} \to \cZ(\cC), \quad V \boxtimes W \mapsto (V \otimes W, c^{V \otimes W}), \]
where $c^{V \otimes W}_X:= (\id_V \otimes c_{W,X}^{-1})(c_{X,V} \otimes \id_W)$,  for $X \in \cC$.
The functor $G_\cC$ is braided and tensor. If $G_\cC$ is an equivalence (of braided finite tensor categories), then we say that $\cC$ is {\it factorizable}. 

\smallskip

Next, the {\it symmetric center} (or {\it M\"{u}ger center}) of $\cC$, denoted by $\cZ_2(\cC)$,  is defined as the full subcategory of $\cC$ consisting of the objects:
\[
\text{Ob}(\cZ_2(\cC)) := \{X \in \cC ~|~ c_{Y,X}\circ c_{X,Y} = \id_{X \otimes Y}, \; \forall Y \in \cC\}.
\]
We say that $\cC$ has {\it trivial symmetric center} if $\cZ_2(\cC) \simeq \FdVec$.

\smallskip

On the other hand, recall the notation of $\S$\ref{ss:coendBTC}. 
We say that $\cC$ is {\it nondegenerate} if 
\[
\xymatrix@C=5pc{
\theta_{\cC}: \mathtt{C}_{\cC}
\ar[r]^(.43){\id \; \otimes \;\coev^L_{\mathtt{C}_{\cC}}}
&
\mathtt{C}_{\cC} \otimes \mathtt{C}_{\cC} \otimes \mathtt{C}_{\cC}^*
\ar[r]^(.6){\omega_\cC \; \otimes \; \id}
& 
\mathtt{C}_{\cC}^*
} 
\]
is an isomorphism in $\cC$.

\smallskip

Lastly, retaining the notation above, we say that a braided finite tensor category $\cC$ is {\it weakly-factorizable} if the  map below is bijective:
\[
\Omega_{\cC}: \Hom_\cC(\one, \mathtt{C}_{\cC}) \to \Hom_\cC( \mathtt{C}_{\cC}, \one), 
\quad f \mapsto \omega_{\cC} \bigl(f \otimes \id_{\mathtt{C}_{\cC}}\bigr).
\]

\smallskip

Now we have a powerful result of K. Shimizu \cite {shimizu2019non} on the properties above in the finite setting.

\begin{theorem}\label{thm:Shimizu} \cite[Theorem~1.1]{shimizu2019non} The following conditions are equivalent for a braided finite tensor category: \textnormal{(a)} nondegeneracy; \textnormal{(b)} factorizability; \textnormal{(c)} weak factorizability; and \textnormal{(d)} having trivial symmetric center. \qed
\end{theorem}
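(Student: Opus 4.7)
The plan is to prove the four-way equivalence via the cycle (a)$\,\Leftrightarrow\,$(b)$\,\Rightarrow\,$(d)$\,\Rightarrow\,$(a), handling (a)$\,\Leftrightarrow\,$(c) separately.

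First, for (a)$\,\Leftrightarrow\,$(b), I would use monadicity. The forgetful functor $U\colon \cZ(\cC) \to \cC$ admits a left adjoint $L$, and the induced monad $UL$ on $\cC$ is canonically isomorphic to $V \mapsto \mathtt{C}_{\cC} \otimes V$, with algebra structure inherited from Lemma~\ref{lem:EcHopfAlg}(a). Barr--Beck then yields an equivalence $\cZ(\cC) \simeq \mathtt{C}_{\cC}\text{-}\mathsf{Mod}(\cC)$. In parallel, Proposition~\ref{prop:coend} applied to $\cC$ and $\overline{\cC}$, combined with the duality identification $\mathtt{E}_{\cC}^{*} \cong \mathtt{C}_{\cC}$ from \eqref{eq:EcCc}, realizes $\cC \boxtimes \overline{\cC}$ as $\mathtt{E}_{\cC}\text{-}\mathsf{Mod}(\cC)$. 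Tracing $G_{\cC}$ through these two identifications --- using the universal properties of the (co)ends of $\S$\ref{ss:coend} --- shows that $G_{\cC}$ corresponds to restriction of scalars along $\theta_{\cC}^{*} \colon \mathtt{E}_{\cC} \to \mathtt{C}_{\cC}$. Hence $G_{\cC}$ is an equivalence iff $\theta_{\cC}$ is an isomorphism in $\cC$.

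For (a)$\,\Leftrightarrow\,$(c): rigidity yields a natural bijection $\Hom_{\cC}(\mathtt{C}_{\cC}, \one) \cong \Hom_{\cC}(\one, \mathtt{C}_{\cC}^{*})$ under which $\Omega_{\cC}$ becomes postcomposition with $\theta_{\cC}$, giving (a)$\,\Rightarrow\,$(c) immediately. For the converse, I would bootstrap the single Hom-level bijection to an object-level isomorphism by applying weak factorizability to the family of enhanced bijections $\Hom_{\cC}(X, \mathtt{C}_{\cC}) \to \Hom_{\cC}(X, \mathtt{C}_{\cC}^{*})$ obtained via Yoneda from the Hopf-pairing structure and the compatibility of $\theta_{\cC}$ with the coalgebra structure on $\mathtt{C}_{\cC}$.

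For (b)$\,\Rightarrow\,$(d): the Drinfeld center of any finite tensor category has trivial symmetric center, since a half-braiding on $V$ that is symmetric against every object of $\cZ(\cC)$ --- in particular against the half-braided versions of $X \otimes X^{*}$ for all $X \in \cC$ --- forces $V$ to have trivial half-braiding and be isomorphic to $\one^{\oplus n}$. Transporting this through the braided equivalence $G_{\cC}$ yields $\cZ_{2}(\cC \boxtimes \overline{\cC}) \simeq \mathsf{FdVec}$, which the hexagon axioms on the Deligne-product braiding let one unpack to $\cZ_{2}(\cC) \simeq \mathsf{FdVec}$.

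The main obstacle is (d)$\,\Rightarrow\,$(a). My plan is to place $\ker(\theta_{\cC})$ and $\coker(\theta_{\cC})$ inside $\cZ_{2}(\cC)$, so that triviality of the symmetric center forces $\theta_{\cC}$ to be an isomorphism. For the kernel: by definition of $\theta_{\cC}$, the Hopf pairing $\omega_{\cC}$ vanishes on $\ker(\theta_{\cC}) \otimes \mathtt{C}_{\cC}$; expanding via the explicit formula for $\omega_{\cC}$ in Lemma~\ref{lem:EcHopfAlg}(b) together with dinaturality shows this vanishing is equivalent to the double-braiding $c_{Y, -} \circ c_{-, Y}$ being the identity on every simple subobject of $\ker(\theta_{\cC})$, placing such subobjects in $\cZ_{2}(\cC)$. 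A dual argument handles $\coker(\theta_{\cC})$. Executing the ``double-braiding translation'' rigorously via the end/coend calculus of $\S$\ref{ss:coend} is the technical heart of the proof, and the step I expect to be hardest.
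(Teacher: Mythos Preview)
The paper does not prove this theorem; it is cited from \cite{shimizu2019non} with a \qed\ box and no argument given. The paper does, however, summarize Shimizu's logical route at the end of \S\ref{ss:triv}: the chain is (a)$\Rightarrow$(c)$\Rightarrow$(d), then (d)$\Leftrightarrow$(b), then (b)$\Leftrightarrow$(a). Your plan deviates from this by attempting (c)$\Rightarrow$(a) and (d)$\Rightarrow$(a) directly, and both of those steps have genuine gaps.

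For (c)$\Rightarrow$(a): weak factorizability says only that postcomposition with $\theta_{\cC}$ is a bijection on $\Hom_{\cC}(\one,\mathtt{C}_{\cC})$. There is no Yoneda mechanism that promotes a bijection on $\Hom(\one,-)$ to bijections on all $\Hom(X,-)$, and the Hopf-pairing compatibilities you invoke do not supply one. Shimizu never attempts (c)$\Rightarrow$(a) directly; he proves (c)$\Rightarrow$(d) by a dimension count on $\cZ_2(\cC)$ (the argument the present paper adapts in Theorem~\ref{thm:wf-triv}).

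For (d)$\Rightarrow$(a): even if you could show that every simple subquotient of $\ker(\theta_{\cC})$ lies in $\cZ_2(\cC)$---already delicate, since subobjects of the coend $\mathtt{C}_{\cC}$ are not accessible via the structural maps $\iota_{\cC}(X)$, so the explicit formula in Lemma~\ref{lem:EcHopfAlg}(b) cannot be applied to them directly---triviality of $\cZ_2(\cC)$ would yield only $\ker(\theta_{\cC}) \cong \one^{\oplus n}$, not $n=0$. Nothing in your sketch excludes copies of $\one$ in the kernel. Shimizu's route instead proves (d)$\Leftrightarrow$(b) by analyzing the functor $G_{\cC}$ itself and identifying the obstruction to its being an equivalence with $\cZ_2(\cC)$; the kernel of $\theta_{\cC}$ is never handled directly.

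Your (a)$\Leftrightarrow$(b) monadicity sketch and your (b)$\Rightarrow$(d) argument are both essentially correct and align with Shimizu's approach (and with the module-category generalization in Theorem~\ref{thm:FactNondeg}).
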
 

This generalizes results in the fusion setting, e.g.,  appearing in \cite{bruguieres2000premod, muger2003subfactors, drinfeld2010braided}.

\smallskip

For example, the Drinfeld center $\cZ(\cC)$ of a (not necessarily braided) finite tensor category $\cC$ is factorizable by \cite[Proposition~8.6.3]{etingof2015tensor}, and thus, it is weakly factorizable, nondegenerate, and has trivial symmetric center by Theorem~\ref{thm:Shimizu}.


\section{Preliminaries on module categories and their braidings} 

In this section, we provide background material on module categories over tensor categories [$\S$\ref{sec:modcat}], on braidings of such module categories [$\S$\ref{sec:brmod}], and on reflective centers, which are key examples of braided module categories [$\S$\ref{sec:refcenter}].


\subsection{Module categories}   \label{sec:modcat}
See various parts of \cite{etingof2015tensor, walton2024} for details on the material below.
Let $\cC$ be a monoidal category unless stated otherwise.

\subsubsection{Module categories} A {\it left $\cC$-module category}  is a category $\cM$ equipped with
a bifunctor $\act: \cC \times \cM \to \cM$,  a natural isomorphism $m:=\{m_{X,Y,M}\colon  (X \otimes Y) \act M \overset{\sim}{\to} X \act (Y \act M)\}_{X,Y \in \cC, M \in \cM}$, and 
 a natural isomorphism $p:=\{p_M\colon  \one \act M \overset{\sim}{\to} M\}_{M \in \cM}$, 
such that  associativity and unitality axioms hold.
Right $\cC$-module categories are defined likewise.

\begin{hypothesis} 
Unless stated otherwise, by the strictness theorem for left module categories, we will assume that all left $\cC$-module categories are {\it strict} in the sense that for  $X, Y \in \cC$ and $M \in \cM$:
\[
X \otimes Y \act M := (X \otimes Y) \act M = X \act (Y \act M), \quad \quad \quad \quad
M := \one \act M;
\]
 that is, $m_{X,Y,M},\; p_M$ are identity maps. We assume the same for right $\cC$-module categories.
\end{hypothesis}

For example, consider the following left $\cC$-module categories.  The {\it regular left $\cC$-module category} is given by
$\cC_{\text{reg}} :=  (\cC, \; \triangleright:= \otimes)$. 
For $A \in \mathsf{Alg}(\cC)$,  we also have that $\mathsf{Mod}$-$A(\cC)$ is a left $\cC$-module category. Namely, $X  \act (M, \hbox{\scriptsize{$\vartriangleleft$}}_M) := \bigl(X \otimes M, \; \id_X \otimes \hbox{\scriptsize{$\vartriangleleft$}}_M\bigr)$, for $X \in \cC$ and $(M, \hbox{\scriptsize{$\vartriangleleft$}}_M) \in \mathsf{Mod}$-$A(\cC)$. Right versions are defined likewise.

\smallskip

A {\it left $\cC$-module functor} between left $\cC$-module categories $(\cM, \act)$ and $(\cM', \act')$ is a functor \linebreak $F: \cM \to \cM'$ equipped with a natural isomorphism,
$s:=\{s_{X,M}: F(X \act M) \isomorph X \act' F(M)\}_{X \in \cC, M \in \cM}$
such that the following associativity and unitality axioms hold:
\begin{eqnarray} 
s_{X \otimes Y,M} &=&   (\id_X \act' s_{Y,M}) \; s_{X, Y \act M},  \label{eq:mod-s-axioms}\\[.2pc]
s_{\one, M}  &=& F(\id_M). \label{eq:mod-s-axioms-2}
\end{eqnarray}

\smallskip

A {\it left module category over a finite tensor category $\cC$} is a left $\cC$-module category $(\cM, \act)$ that is abelian, finite, bilinear on morphisms, such that $(- \act M): \cC \to \cM$ is exact for each $M \in \cM$. A similar notion holds for right module categories.
We also assume that module functors between such module categories are additive in each slot.

\begin{remark}\label{rem:act-faithful}
In the finite tensor case above,  $X\act M$ is nonzero in $\cM$, for any nonzero $X \in \cC$, and any nonzero $M\in\cM$.
Indeed, by way of contradiction, if $X\act M=0$, then $(X^* \otimes X)\act M=0$. Since $\text{ev}^L_X:X^*\otimes X\rightarrow\one$ is epic, and the functor $(-\act M): \cC \to \cM$ is exact, we get that $0=(X^* \otimes X)\act M \rightarrow \one\act M \cong M$ is epic. This implies that  $M=0$, a contradiction. 
\end{remark}

\subsubsection{Exact module categories} A finite module category $(\cM, \act)$ over a finite tensor category $\cC$ is called {\it exact} if for any projective object $P \in \cC$ and any object $M \in \cM$, we have that the object $P \act M$ is projective in $\cM$.

\smallskip

We say that an algebra $A$ in $\cC$ is {\it exact} if the left $\cC$-module category $\mathsf{Mod}$-$A(\cC)$ is exact.

\subsubsection{Categories of exact module functors} \label{ss:Fun-C}
Let $\mathsf{Fun}_{\cC|}(\cM,\cN)$ denote the category of right exact \linebreak $\cC$-module functors from a left $\cC$-module category $\cM$ to a left $\cC$-module category $\cN$. Here, morphisms are natural transformations between such functors, compatible with left $\cC$-actions. 

\smallskip

If $\cC$ is a finite tensor category, and $\cM$ is an indecomposable, exact left $\cC$-module category, then $\mathsf{Fun}_{\cC|}(\cM,\cM)$ is a finite tensor category \cite{etingof2004finite}. Here,  $(F,s) \otimes^{\mathsf{Fun}_{\cC|}(\cM,\cM)} (F',s') := (FF', s^{FF'})$, where
\[
s^{FF'}_{X,M} := s_{X,F'(M)} \circ F(s'_{X,M}): FF'(X \triangleright M) \to X \triangleright FF'(M),
\]
for $X \in \cC$ and $M \in \cM$. We have also that $\one^{\mathsf{Fun}_{\cC|}(\cM,\cM)}:=\bigl(\text{Id}_{\cM},\{\id_{X \triangleright M}\}_{X,M}\bigr)$.

\smallskip

In particular, we obtain that: 
\begin{equation} \label{eq:FunCop}
\mathsf{Fun}_{\cC|}(\cC_{\text{reg}},\cC_{\text{reg}}) \; \overset{\otimes}{\simeq} \; \cC^{\otimes \text{op}},  
\end{equation}
where $(F,s): \cC_{\text{reg}} \to \cC_{\text{reg}}$ is sent to $F(\one)$, and  $Z$ is sent to $((- \otimes Z), s:=\id): \cC_{\text{reg}} \to \cC_{\text{reg}}$.

\subsubsection{Closed module categories and internal Homs}  A left $\cC$-module category $(\cM, \triangleright)$ is  {\it closed} if, for each $M \in \cM$, the functor   $(- \triangleright M): \cC \to \cM$ has a right adjoint, denoted by 
\[
\underline{\textnormal{Hom}}(M,-): \cM \to \cC.
\]
That is, there exists a bijection
\begin{equation} \label{eq:internalHom}
\zeta := \zeta_{X,N}:= \Hom_{\cM}(X \triangleright M, N) \; \overset{\sim}{\to} \; \Hom_{\cC}(X,\;  \underline{\textnormal{Hom}}(M, N)),
\end{equation}
natural in   $X \in \cC$ and in $N \in \cM$.
The object 
 $\underline{\textnormal{Hom}}(M, N) \in \cC$  is  the {\it internal Hom} of $M$ and $N$.

 \medskip
 
We label the following morphisms:
 \[
 \begin{array}{rl}
\coev_{\one,M}\;  &:= \zeta(\id_M): \; \one \to \underline{\textnormal{Hom}}(M, M) \; \in \cC,\\[.4pc]
\ev_{M,N} \; &:= \zeta^{-1}(\id_{\underline{\textnormal{Hom}}(M,N)}): \;\underline{\textnormal{Hom}}(M,N) \act M \to N \; \in \cM,\\[.4pc]
\ev_{M,N,P} \; &:= \ev_{N,P} \,(\id_{\underline{\textnormal{Hom}}(N,P)} \act \ev_{M,N}): \;\underline{\textnormal{Hom}}(N,P) \otimes \underline{\textnormal{Hom}}(M,N) \act M \to P \; \in \cM,\\[.4pc]
\text{comp}_{M,N,P} \; & := \zeta(\ev_{M,N,P}): \; \underline{\textnormal{Hom}}(N,P) \otimes \underline{\textnormal{Hom}}(M,N)  \to \underline{\textnormal{Hom}}(M,P) \; \in \cC.
 \end{array}
 \]
 One useful identity is that for all $M \in \cM$:
 \begin{equation} \label{eq:evMMcoev1M}
\ev_{M,M} \; (\coev_{\one, M} \act \id_M) = \id_M,
 \end{equation}
by the naturality of $\zeta_{X,N}$ in $X$. Another useful identity is that for all $M,N,P \in \cM$:
 \begin{equation} \label{eq:compMNPiden}
\ev_{M,P} \; (\text{comp}_{M,N,P} \act \id_M) = \ev_{N,P} \; (\id_{\underline{\text{Hom}}(N,P)} \act \ev_{M,N}).
 \end{equation}
by the naturality of $\zeta_{X,N}$ in $X$.
If $\cC$ is rigid, we have also that, for each $X \in \cC$, $M,N \in \cM$:
 \begin{equation} \label{eq:internalHom-2}
X \otimes \; \underline{\textnormal{Hom}}(M, N) \; \cong \; \underline{\textnormal{Hom}}(M, X  \triangleright N).
\end{equation}

Note that finite left module categories over finite tensor categories $\cC$ are always closed. For example, if $\cM = \cC_{\text{reg}}$ with $Z \in \cC_{\text{reg}}$, then $\underline{\textnormal{Hom}}(Z,Z) = Z \otimes Z^*$.


\subsection{Braided module categories}  \label{sec:brmod}
See \cite{laugwitz2023reflective} for more details here.
Take $\cC:=(\cC, \otimes, \one, c)$ to be a braided finite tensor category.  We say that a left $\cC$-module category $(\cM, \act)$ is {\it braided} if it is equipped with a natural isomorphism, 
$$e:=\{e_{X,M}\colon  X \act M \overset{\sim}{\to}  X \act M\}_{X \in \cC, M \in \cM} \; \; \text{({\it braiding})},$$ 
 such that the following axioms hold, for each $X,Y \in \cC$ and $M \in \cM$:
\begin{align} \label{eq:brmod1}
e_{X \otimes Y, M}
  &=  (\id_X \act e_{Y,M}) (c_{Y,X} \act \id_M)  (\id_Y \act e_{X,M})(c_{Y,X}^{-1} \act \id_M),\\[.2pc]
\label{eq:brmod2}
 e_{X,Y \act M}
  &= (c_{Y,X} \act \id_M)  (\id_Y \act e_{X,M})  (c_{X,Y} \act \id_M).
 \end{align}

Using \eqref{eq:braid-unit}, we obtain that for each $M \in \cM$:
\begin{align}
e_{\one, M}  = \id_M. 
\label{eq:braidmod-unit}
\end{align}

\begin{example} \label{ex:M=C,double}
Suppose that $(\cC,\otimes,\one,c)$ and $(\cC',\otimes',\one',c')$ are braided finite tensor categories and \linebreak $F: \cC \to \cC'$ is a braided tensor functor. Then, $\cC'$ is a braided left $\cC$-module category with $X \act M := F(X) \otimes' M,$ 
and it is braided with
$e_{X,M} := c'_{M,F(X)} \circ c'_{F(X),M}$,
for all $X \in \cC$ and $M \in \cC'$.

Taking $F$ to be identity functor, for instance, implies that the regular left $\cC$-module category $\cC_{\reg}$ is braided with $e = c^2$.
\end{example}

A {\it braided $\cC$-module functor} between braided left $\cC$-module categories $(\cM,\act, e)$ and $(\cM',\act', e')$ is a left $\cC$-module functor $(F, s)\colon  (\cM, \act) \to (\cM', \act')$ such that, for all $X \in \cC$ and $M \in \cM$:
\begin{equation} \label{eq:braidmodfunc}
e'_{X,F(M)} \circ s_{X,M} = s_{X,M} \circ  F(e_{X,M}).
\end{equation}
An {\it equivalence (resp., isomorphism) of  braided $\cC$-module categories} is a braided $\cC$-module functor that yields an equivalence (resp., isomorphism) of the underlying categories.


\subsection{Reflective centers} \label{sec:refcenter} 
See \cite{laugwitz2023reflective} for more details here.
Take $\cC:=(\cC, \otimes, \one, c)$ to be a braided finite tensor category. An important example of a braided module category over $\cC$ is the {\it reflective center}  $\cE_{\cC}(\cM)$ of a left $\cC$-module category $(\cM, \act)$. Its objects are pairs $(M, e^M)$, where $M \in \cM$ and $e^M:=\{e^M_X \colon  X \act M \overset{\sim}{\to} X \act M\}_{X \in \cC}$ is a natural isomorphism, called a  {\it reflection}, satisfying 
\begin{equation} \label{eq:reflection}
\begin{array}{c}
e^M_{X \otimes Y}  \;=  \; (\id_X \act e^M_Y) (c_{Y,X} \act \id_M) (\id_Y \act e^M_X)(c_{Y,X}^{-1} \act \id_M).
 \end{array}
\end{equation}
Morphisms $(M, e^M) \to (N, e^N)$ of $\cE_{\cC}(\cM)$ are given by
  $f \in \Hom_{\cM}(M,N)$ such that, for all $X \in \cC$:
 \begin{equation} \label{eq:ECMmorp}
 (\id_X \act f) \; e^M_X = e^N_X \; (\id_X \act f).
 \end{equation}
 The left $\cC$-action on $\cE_{\cC}(\cM)$ is 
$Y \act (M,e^M):=(Y \act M, e^{Y \act M})$,
for  $Y \in \cC$, $(M,e^M) \in \cE_{\cC}(\cM)$, where
\begin{equation} \label{eq:ECMact}
e^{Y \act M}_X \; := \; (c_{Y,X} \act \id_M)  (\id_Y \act e_X^M)  (c_{X,Y} \act \id_M),
\end{equation}
for each $X \in \cC$. The braiding of $\cE_{\cC}(\cM)$ is 
$e^{\cE_{\cC}(\cM)}_{Y,(M,e^M)} := e_Y^M:  (Y \act M,e^{Y \act M}) \to 
(Y \act M,e^{Y \act M})$,
for each $Y \in \cC$ and $(M,e^M) \in \cE_{\cC}(\cM)$.

\begin{remark} \label{rem:Ec(C)Z(C)}
The reflective center $\cE_\cC({}_{\cC}(\cC_{\textnormal{reg}}))$ of the regular left $\cC$-module category is isomorphic to  $\cZ(\cC)$, as categories. This is given by sending $(V,e^V)$ to $(V,c^V)$, where $c^V_X:= c_{V,X}^{-1} \circ e^V_X$ for $X \in \cC$.
\end{remark}


\section{Category equivalences for module categories} 

In this section, we provide background material and preliminary results on algebras formed by end objects [$\S$\ref{ss:Endmod}] and on Eilenberg-Moore (EM-)categories attached to module categories [$\S$\ref{ss:EMcat}]. Next, we establish a monadicity result for such EM-categories [$\S$\ref{ss:monadicity}]. This monadicity is then used to prove a category equivalence between a certain Deligne product of categories and an EM-category, both attached to a module category [$\S$\ref{ss:con-DelEM}]. Lastly, we prove a category equivalence between a reflective center and an EM-category, both attached to a module category [$\S$\ref{ss:con-refEM}].


\subsection{End algebras} \label{ss:Endmod}

Recall the (co)end objects from $\S$\ref{ss:coendBTC}, and consider the end objects in $\cC$ attached to $\cM$ below:
\[ 
\mathtt{E}_{\cM}:= \textstyle \int_{M \in \cM} \underline{\textnormal{Hom}}(M,M).
\]
By their universal constructions, $\mathtt{E}_{\cM}$ is attached to canonical morphisms in~$\cC$: 
\[ 
\{\pi_{\cM}(M): \mathtt{E}_{\cM} \to \underline{\textnormal{Hom}}(M,M)\}_{M \in \cM}.
\]

\smallskip

Using \eqref{eq:EcCc}, we then have for the regular left $\cC$-module category that: 
\begin{equation} \label{eq:EcCreg}
\; \; {\tt E}_{\cC_{\textnormal{reg}}}^* \;  =  \; {\tt E}_\cC^*  \; =  \;  \textstyle \bigl(\int_{Z \in \cC}  \underline{\textnormal{Hom}}(Z,Z) \bigr)^* \;   \cong \; \textstyle \bigl(\int_{Z \in \cC}  Z \otimes Z^* \bigr)^*  \; \cong \; {\tt C}_\cC.
\end{equation}

\smallskip

The following result is straightforward to verify; see, e.g., \cite{shimizu2020further}.

\begin{lemma} \label{lem:EmAlg}
The  end object $\mathtt{E}_{\cM}$ exists, and is an algebra in $\cC$. For $M \in \cM$, the algebra operations of $\mathtt{E}_{\cM}$ are given by:

\vspace{-.2in}

\[
\begin{array}{rl}
\pi_{\cM}(M) \circ m_{\mathtt{E}_{\cM}} & = \; \textnormal{comp}_{M,M,M}\; (\pi_{\cM}(M) \otimes \pi_{\cM}(M)),\\[.4pc]
\pi_{\cM}(M) \circ u_{\mathtt{E}_{\cM}} & = \; \coev_{\one, M}.
\end{array}
\]

\vspace{-.2in}

\qed
\end{lemma}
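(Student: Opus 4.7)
My plan is to establish existence of the end object $\mathtt{E}_{\cM}$ first, then define the algebra structure by invoking the universal property twice (once for the unit, once for multiplication), and finally verify associativity and unitality by the uniqueness clause of that same universal property. For existence, I would appeal to Proposition~\ref{prop:coend}: under the equivalence $\Psi: \mathsf{Fun}(\cM,\cM) \simeq \cM^{\mathrm{op}} \boxtimes \cM$, the identity functor $\mathrm{Id}_\cM$ corresponds to $\int_{M} M \boxtimes M$, and applying the right-exact functor $\underline{\mathrm{Hom}}(-,-)$ (suitably understood on $\cM^{\mathrm{op}} \boxtimes \cM$) yields $\mathtt{E}_{\cM}$ as an end in $\cC$. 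Equivalently, since $\cM$ is finite and $\cC$ is a finite tensor category, the functor $M \mapsto \underline{\mathrm{Hom}}(M,M)$ has an end by the standard finiteness argument.

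To define the unit $u_{\mathtt{E}_{\cM}}: \one \to \mathtt{E}_{\cM}$, I would verify that the collection $\{\coev_{\one,M}: \one \to \underline{\mathrm{Hom}}(M,M)\}_{M \in \cM}$ constitutes a dinatural transformation $K_{\one} \overset{..}{\to} \underline{\mathrm{Hom}}(-,-)$. Dinaturality here says that for every $f: M \to M'$, the two ways of applying $f$ on either side of $\coev$ agree; this follows routinely from the naturality of $\zeta$ from \eqref{eq:internalHom} in both variables. The universal property of $\mathtt{E}_{\cM}$ then produces the unique $u_{\mathtt{E}_{\cM}}$ satisfying $\pi_{\cM}(M) \circ u_{\mathtt{E}_{\cM}} = \coev_{\one,M}$.

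To define the multiplication, I would consider the candidate dinatural transformation
\[
\delta_M := \mathrm{comp}_{M,M,M} \circ (\pi_{\cM}(M) \otimes \pi_{\cM}(M)): \; \mathtt{E}_{\cM} \otimes \mathtt{E}_{\cM} \longrightarrow \underline{\mathrm{Hom}}(M,M).
\]
The content of this step is checking dinaturality: for $f: M \to M'$, the two composites with $\underline{\mathrm{Hom}}(f,M')$ and $\underline{\mathrm{Hom}}(M,f)$ must coincide. I expect this to be the main bookkeeping obstacle, since $\mathrm{comp}$ touches the $M$ slot in three places; I would reduce the equality to the dinaturality of $\pi_{\cM}$ by decomposing $\mathrm{comp}_{M,M,M}$ through $\mathrm{comp}_{M,M',M'}$ and $\mathrm{comp}_{M,M,M'}$ via identity~\eqref{eq:compMNPiden}, then compose with $\ev$ and use the naturality of $\zeta$ to remove it. Once dinaturality is established, the universal property yields the unique $m_{\mathtt{E}_{\cM}}$ satisfying the stated formula.

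Finally, associativity and unitality follow by the uniqueness clause of the universal property: to show $m_{\mathtt{E}_{\cM}} \circ (m_{\mathtt{E}_{\cM}} \otimes \id) = m_{\mathtt{E}_{\cM}} \circ (\id \otimes m_{\mathtt{E}_{\cM}})$, I would compose both sides with $\pi_{\cM}(M)$ and reduce, via the stated formula, to the associativity of $\mathrm{comp}_{M,M,M}$. That associativity in turn unwinds through $\zeta$ and~\eqref{eq:compMNPiden} applied twice. Unitality reduces similarly to the identity~\eqref{eq:evMMcoev1M} applied through $\zeta$. Thus the algebra axioms for $(\mathtt{E}_{\cM}, m_{\mathtt{E}_{\cM}}, u_{\mathtt{E}_{\cM}})$ in $\cC$ all descend from the corresponding internal-Hom identities, completing the proof.
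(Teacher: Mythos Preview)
Your proposal is correct and is precisely the standard argument the paper alludes to: the paper itself gives no proof, simply declaring the result straightforward and citing \cite{shimizu2020further}. Your plan---existence from finiteness, unit and multiplication via the universal property of the end applied to the dinatural families $\{\coev_{\one,M}\}$ and $\{\mathrm{comp}_{M,M,M}\circ(\pi_{\cM}(M)\otimes\pi_{\cM}(M))\}$, and the algebra axioms via the uniqueness clause---is exactly the expected approach.
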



\subsection{Eilenberg-Moore (EM-)categories} 
\label{ss:EMcat} Let $(\cM, \act)$ be a left $\cC$-module category, and take an algebra $(A,m_A,u_A)$ in  $\cC$. Then, $(A \act -): \cM \to \cM$ forms a monad on $\cM$ with multiplication and unit natural transformations given by:
\[
\hspace{-.07in}
\begin{array}{rl}
\mu^{A \act -}: (A \act -) \circ (A \act -) \Rightarrow (A \act -),
&\{\mu^{A \act -}_M:= m_A \act \id_M: A \otimes A \act M \to A \act M\}_{M \in \cM},\\[.6pc]
\eta^{A \act -}: \text{Id}_{\cM} \Rightarrow (A \act -),
&
\{\eta^{A \act -}_M:= u_A \act \id_M : M \to A \act M\}_{M \in \cM}.
\end{array}
\]

Let $A$-$\mathsf{Mod}(\cM)$ denote the corresponding {\it Eilenberg-Moore (EM) category}. 
Namely, objects are pairs $(M, \xi_M)$, where $M \in \cM$, and  $\xi_M:= \xi^A_M: A \act M \to M$ is a morphism in $\cM$, such that
\begin{equation} \label{eq:EMobject}
\xi_M \circ \mu^{A \act -}_M \; = \; \xi_M \circ (\id_A \act \xi_M), \quad \quad \quad \quad \xi_M \circ \eta^{A \act -}_M \; = \;  \id_M.
\end{equation}
A morphism from $(M, \xi_M)$ to $(N, \xi_{N})$ of $A$-$\mathsf{Mod}(\cM)$ is a morphism $f: M \to N$ in~$\cM$, such that 
\begin{equation} \label{eq:EMmorphism}
f \; \xi_M \; = \; \xi_{N}  (\id_A \act f).
\end{equation}

We also have another useful result due to Davydov--Nikshych.

\begin{lemma} \label{lem:DN} 
\cite[Lemma~3.2]{davydov2013picard} 
For an algebra $A$ in $\cC$, and a left $\cC$-module category $\cM$, we have that the following is an equivalence of categories:
\begin{align*}
\mathsf{Fun}_{\cC|}(\mathsf{Mod}\text{-}A(\cC), \cM) &\overset{\sim}{\to} A\text{-}\mathsf{Mod}(\cM),\\[-0.5pc]
F &\mapsto \bigl(F(A), \; 
\xymatrix{
\xi_{F(A)}^A: A \act F(A) \cong F(A \otimes A) \ar[r]^(.8){F(m_A)} & A\bigr).}
\end{align*}

\vspace{-.3in}

\qed
\end{lemma}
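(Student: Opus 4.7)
The plan is to exhibit an explicit quasi-inverse to the functor in the lemma via a relative tensor product construction, and then verify that the two composites are naturally isomorphic to the identity using a canonical bar-type presentation of right $A$-modules.

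First I construct the candidate quasi-inverse. Given $(N, \xi_N) \in A\text{-}\mathsf{Mod}(\cM)$, I define $G_{(N,\xi_N)} \colon \mathsf{Mod}\text{-}A(\cC) \to \cM$ on objects as the coequalizer
$$G_{(N,\xi_N)}(M,\triangleleft_M) \; := \; \operatorname{coeq}\bigl(\triangleleft_M \act \id_N,\ \id_M \act \xi_N \colon M \otimes A \act N \rightrightarrows M \act N\bigr),$$
which is the module-categorical analogue of the relative tensor product $M \otimes_A N$. Since $\cM$ is finite abelian and all functors in play are right exact by Hypothesis~\ref{hyp:cat}, the coequalizer exists and $G_{(N,\xi_N)}$ is right exact on morphisms. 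The exactness of each $(X \act -) \colon \cM \to \cM$ (see Remark~\ref{rem:act-faithful} and the subsequent exactness conventions) together with the canonical identification $X \act (M \otimes A \act N) = (X \otimes M) \otimes A \act N$ produces a natural isomorphism $s^G_{X,(M,\triangleleft_M)}$ from $G_{(N,\xi_N)}(X \otimes M, \id_X \otimes \triangleleft_M)$ to $X \act G_{(N,\xi_N)}(M,\triangleleft_M)$, and axioms \eqref{eq:mod-s-axioms}--\eqref{eq:mod-s-axioms-2} follow by the universal property.

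Next I verify the composite $(N,\xi_N) \mapsto G_{(N,\xi_N)} \mapsto \bigl(G_{(N,\xi_N)}(A, m_A),\ \xi^A_{G(A,m_A)}\bigr)$ returns $(N,\xi_N)$. The object $G_{(N,\xi_N)}(A, m_A)$ is the coequalizer of the pair $\bigl(m_A \act \id_N,\ \id_A \act \xi_N\bigr)$. The EM-axiom \eqref{eq:EMobject} says that $\xi_N \colon A \act N \to N$ coequalizes this pair, and $u_A \act \id_N$ is a section of $\xi_N$, again by \eqref{eq:EMobject}; so $\xi_N$ is the coequalizer. A short diagram chase, tracking the definition $\xi^A_{F(A)} = F(m_A) \circ s^F_{A,A}$ applied to $F = G_{(N,\xi_N)}$, identifies the induced $A$-action on $N$ with $\xi_N$ itself.

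For the other composite, the key input is the canonical bar presentation in $\mathsf{Mod}\text{-}A(\cC)$,
$$(M \otimes A \otimes A,\ \id_M \otimes \id_A \otimes m_A) \; \rightrightarrows \; (M \otimes A,\ \id_M \otimes m_A) \; \xrightarrow{\ \triangleleft_M\ } \; (M, \triangleleft_M),$$
with parallel arrows $\triangleleft_M \otimes \id_A$ and $\id_M \otimes m_A$. As a diagram in $\cC$, this presentation is split by $\id_M \otimes u_A$ (i.e.\ it is an absolute coequalizer), so any right exact $F \in \mathsf{Fun}_{\cC|}(\mathsf{Mod}\text{-}A(\cC),\cM)$ preserves it. On the free modules appearing, the $\cC$-module structure of $F$ gives natural isomorphisms $F(X \otimes A, \id_X \otimes m_A) \cong X \act F(A,m_A)$ in $X \in \cC$. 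Substituting these into $F$ of the bar presentation reproduces the defining coequalizer of $G_{(F(A),\xi^A_{F(A)})}(M,\triangleleft_M)$, yielding a natural isomorphism of underlying objects of $\cM$. The main obstacle is verifying that this isomorphism is compatible with the left $\cC$-module structures $s^F$ and $s^G$; this is a diagram chase using the associativity constraint \eqref{eq:mod-s-axioms} and the naturality of $s^F$ in $(M,\triangleleft_M)$, which completes the proof that the two functors in the lemma are mutually quasi-inverse.
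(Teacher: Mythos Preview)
The paper does not give a proof of this lemma; it is quoted from \cite[Lemma~3.2]{davydov2013picard} with no argument supplied. Your proof via the relative tensor product $(-)\act_A N$ and the bar presentation of right $A$-modules is the standard one and is correct in outline.

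One clarification is worth making. The splitting $\id_M\otimes u_A$ of the bar presentation lives in $\cC$, not in $\mathsf{Mod}\text{-}A(\cC)$, so it does not by itself render the coequalizer absolute from the point of view of $F$ (which is only defined on $\mathsf{Mod}\text{-}A(\cC)$). The role of the $U$-splitting is rather to certify that the diagram \emph{is} a coequalizer in $\mathsf{Mod}\text{-}A(\cC)$ (the forgetful functor creates $U$-split coequalizers); its preservation by $F$ then follows from right exactness, since a coequalizer is the cokernel of a difference in the abelian setting. You invoke both ingredients, but the phrasing conflates their roles.
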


Now, we consider actions $\act$ relative to $A$.
For $(X, \vartriangleleft^A_X) \in \mathsf{Mod}\text{-}A(\cC)$ and $(M, \xi^A_M) \in A\text{-}\mathsf{Mod}(\cM)$, consider the following coequalizer in $\cM$:
\begin{equation}\label{eq:coeq-over-A}
X \act_A M 
:= \textnormal{coeq}\left(X \otimes A \act M 
\doublerightarrow{\vartriangleleft^A_X \; \act \; \id}{\id \; \otimes \; \xi^A_M}
X \act M\right).  
\end{equation}

The proof of the following result is standard.

\begin{lemma} \label{lem:coeq-id}
For an algebra $A$ in $\cC$, and $(M, \xi^A_M) \in A\text{-}\mathsf{Mod}(\cM)$, we have that  $A \act_A M \cong M$. \qed
\end{lemma}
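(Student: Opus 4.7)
The plan is to show that the action map $\xi^A_M : A \act M \to M$ itself realizes the coequalizer defining $A \act_A M$. This is a standard "split coequalizer" argument: the EM-axioms \eqref{eq:EMobject} produce the splitting data that make $\xi^A_M$ absolute epi. Concretely, I would proceed in three short steps.

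First, I would check that $\xi^A_M$ coequalizes the parallel pair in \eqref{eq:coeq-over-A} with $X = A$. The two arrows are $m_A \act \id_M$ and $\id_A \act \xi^A_M$, and the first axiom in \eqref{eq:EMobject} says exactly that $\xi^A_M \circ (m_A \act \id_M) = \xi^A_M \circ (\id_A \act \xi^A_M)$, since $\mu^{A \act -}_M = m_A \act \id_M$.

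Second, I would verify the universal property. Given any $f : A \act M \to N$ in $\cM$ with $f \circ (m_A \act \id_M) = f \circ (\id_A \act \xi^A_M)$, define
\[
\bar f \; := \; f \circ (u_A \act \id_M) : M \cong \one \act M \longrightarrow A \act M \longrightarrow N.
\]
To see $\bar f \circ \xi^A_M = f$, I would use naturality of the unit transformation $\eta^{A \act -}_{(-)} = u_A \act \id_{(-)}$ applied to $\xi^A_M$, giving
$(u_A \act \id_M) \circ \xi^A_M = (\id_A \act \xi^A_M) \circ (u_A \act \id_{A \act M})$; then the coequalizing hypothesis on $f$ rewrites this composite using $m_A \act \id_M$ in place of $\id_A \act \xi^A_M$, and the unit axiom $m_A(u_A \otimes \id_A) = \id_A$ collapses the result to $f$.

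Third, I would handle uniqueness of the factorization. The second axiom in \eqref{eq:EMobject}, $\xi^A_M \circ (u_A \act \id_M) = \id_M$, exhibits $\xi^A_M$ as a split epimorphism, so any two factorizations through it must agree; in fact, any candidate $\bar f'$ with $\bar f' \circ \xi^A_M = f$ must equal $\bar f' \circ \xi^A_M \circ (u_A \act \id_M) = f \circ (u_A \act \id_M) = \bar f$. Since steps 1--3 together verify the universal property of the coequalizer, we conclude $A \act_A M \cong M$, with the isomorphism induced by $\xi^A_M$. No step here is a serious obstacle; the only thing to be careful about is keeping track of the identifications $\one \act M = M$ and $\one \otimes A = A$ under the strictness convention in force, so that the naturality diagrams and unit identities compose cleanly.
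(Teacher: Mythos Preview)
Your proof is correct and is exactly the standard split-coequalizer argument the paper alludes to (the paper gives no detailed proof, only the remark that the result is standard). The identification $\alpha^{-1} = u_A \act \id_M$ you produce along the way is also consistent with how the paper uses $\alpha$ in Lemma~\ref{lem:for-later-prop} and \eqref{eq:xi-equals-action}.
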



Let $\chi:A\act M\rightarrow A \act_A M$ denote the canonical projection map of the coequalizer above, and call the isomorphism $\alpha: A\act_A M\rightarrow M$. As a consequence of the lemma above, we have that 
    $\alpha\circ \chi = \xi^A_M$,
and the standard result below.

\begin{lemma}\label{lem:for-later-prop}
For an algebra $A$ in $\cC$, and $(M, \xi^A_M) \in A\text{-}\mathsf{Mod}(\cM)$, we have that 
\begin{equation}\label{eq:A-action}
  \xi^A_M \circ (\id_A \act \alpha) = \alpha \circ (m_A \act_A \id_M) :A \act (A\act_A M) \rightarrow M. 
\end{equation}

\vspace{-.25in}

\qed
\end{lemma}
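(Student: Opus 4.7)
The plan is to prove the identity by exploiting the universal property of the coequalizer defining $A \act_A M$, reducing the claim to a statement that follows directly from the EM-object axiom \eqref{eq:EMobject}. The key observation is that the functor $A \act (-): \cM \to \cM$ preserves colimits (being left adjoint to $\underline{\textnormal{Hom}}(A, -)$ in the finite setting of Hypothesis~\ref{hyp:cat}), so applying it to the defining coequalizer \eqref{eq:coeq-over-A} with $X = A$ yields a coequalizer presenting $A \act (A \act_A M)$, whose projection is $\id_A \act \chi: A \act (A \act M) \to A \act (A \act_A M)$. In particular, $\id_A \act \chi$ is an epimorphism, so it suffices to check the desired identity after precomposition with it.

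First I would compute the left-hand side $\xi^A_M \circ (\id_A \act \alpha)$ composed with $\id_A \act \chi$:
\[
\xi^A_M \circ (\id_A \act \alpha) \circ (\id_A \act \chi) \; = \; \xi^A_M \circ (\id_A \act (\alpha \circ \chi)) \; = \; \xi^A_M \circ (\id_A \act \xi^A_M),
\]
where the last equality uses $\alpha \circ \chi = \xi^A_M$. By the first EM-object axiom in \eqref{eq:EMobject}, this equals $\xi^A_M \circ (m_A \act \id_M)$.

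Next I would compute the right-hand side $\alpha \circ (m_A \act_A \id_M)$ composed with $\id_A \act \chi$. Here I would appeal to the functoriality of $\act_A$: the projection $\chi$ is natural in the module arguments, so applied to $m_A: A \otimes A \to A$ (a morphism in $\mathsf{Mod}\text{-}A(\cC)$, with the right $A$-action on $A \otimes A$ given by $\id_A \otimes m_A$) and $\id_M$, we obtain the commuting square
\[
(m_A \act_A \id_M) \circ (\id_A \act \chi) \; = \; \chi \circ (m_A \act \id_M).
\]
Composing further with $\alpha$ and using $\alpha \circ \chi = \xi^A_M$ again,
\[
\alpha \circ (m_A \act_A \id_M) \circ (\id_A \act \chi) \; = \; \alpha \circ \chi \circ (m_A \act \id_M) \; = \; \xi^A_M \circ (m_A \act \id_M).
\]
Since both sides agree after precomposition with the epimorphism $\id_A \act \chi$, the identity \eqref{eq:A-action} follows.

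The main obstacle is really bookkeeping rather than mathematical difficulty: one must carefully justify that $A \act (-)$ preserves the coequalizer \eqref{eq:coeq-over-A} (so that the projection for $A \act (A \act_A M)$ is indeed $\id_A \act \chi$), and one must verify the naturality square $(m_A \act_A \id_M) \circ (\id_A \act \chi) = \chi \circ (m_A \act \id_M)$, which amounts to an identification $(A \otimes A) \act_A M \cong A \act (A \act_A M)$ compatible with the projection maps. Both points follow from the exactness of $- \act M$ and the standard functoriality of coequalizers, so the proof is short once these identifications are in place.
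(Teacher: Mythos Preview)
Your argument is correct: precomposing both sides of \eqref{eq:A-action} with the epimorphism $\id_A \act \chi$ reduces the claim to the associativity axiom in \eqref{eq:EMobject}, and the two identifications you flag (that $A \act (-)$ preserves the coequalizer so $\id_A \act \chi$ serves as the projection for $(A\otimes A)\act_A M$, and the naturality square for $m_A \act_A \id_M$) are exactly the standard bookkeeping. The paper does not supply a proof at all---it marks the lemma with a bare $\qed$ and calls it ``the standard result below''---so your write-up is a faithful unpacking of what the authors are taking for granted.
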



By precomposing \eqref{eq:A-action} with $\id_A\act\alpha^{-1}:A\act M \rightarrow A\act(A\act_A M)$, we get the following consequence. For an algebra $A$ in $\cC$, and $(M, \xi^A_M) \in A\text{-}\mathsf{Mod}(\cM)$, we have that 
\begin{equation}\label{eq:xi-equals-action}
    \xi_M^A = \alpha\circ (m_A \act_A \id_M)\circ (\id_A\act \alpha^{-1}) : A\act M \rightarrow M.
\end{equation}

Here is a useful lemma, which will be employed later; see, e.g., \cite[$\S$3.3]{shimizu2019non}.
\begin{lemma}\label{lem:AlgResEquiv}
Take an algebra morphism  $ \phi: A \to A'$ in  $\cC$, and let $\cM$ be a left $\cC$-module category. Consider the functor:
\[
\textnormal{Res}_\phi: A'\text{-}\mathsf{Mod}(\cM) \to A\text{-}\mathsf{Mod}(\cM), \quad \quad
(M', \; \xi_{M'}) \mapsto (M', \; \xi_{M'} (\phi \act \id_{M'})).
\]
Then, $\phi$ is an isomorphism of algebras in $\cC$ if and only if $\textnormal{Res}_\phi$ is an equivalence of categories. 
\end{lemma}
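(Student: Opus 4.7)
The plan is to prove the two directions separately. For the forward direction, if $\phi$ is an algebra isomorphism, then $\phi^{-1}$ is also an algebra morphism (its multiplicativity and unitality follow by composing the defining equations with $\phi$), so $\textnormal{Res}_{\phi^{-1}}$ is well defined, and the identities $\phi^{-1}\phi = \id_A$ and $\phi\phi^{-1} = \id_{A'}$ make $\textnormal{Res}_{\phi^{-1}}$ a two-sided inverse of $\textnormal{Res}_\phi$.

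For the converse, the main tool is the left adjoint of $\textnormal{Res}_\phi$, namely the induction functor
\[
\textnormal{Ind}_\phi : A\text{-}\mathsf{Mod}(\cM) \longrightarrow A'\text{-}\mathsf{Mod}(\cM), \qquad (M,\xi^A_M) \longmapsto A' \act_A M,
\]
where $A'$ is regarded as a right $A$-module in $\cC$ via $m_{A'}(\id_{A'} \otimes \phi)$, the relative tensor product $A' \act_A M$ is the coequalizer in \eqref{eq:coeq-over-A}, and the $A'$-action descends from $m_{A'} \act \id_M$ on the left factor. If $\textnormal{Res}_\phi$ is an equivalence, then by uniqueness of adjoints the functor $\textnormal{Ind}_\phi$ must be a quasi-inverse, so in particular the unit $\eta : \id \Rightarrow \textnormal{Res}_\phi \circ \textnormal{Ind}_\phi$ is a natural isomorphism. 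I would then evaluate $\eta$ at the free $A$-modules $(A \act N,\, m_A \act \id_N)$ for $N \in \cM$. A direct coequalizer computation, using the section $\id_{A'} \otimes u_A \act \id_N$ together with associativity of $m_{A'}$ and the multiplicativity of $\phi$, produces an isomorphism $A' \act_A (A \act N) \cong A' \act N$ of $A'$-modules. Tracing the standard formula $\eta_{(M,\xi^A_M)} = \chi \circ (u_{A'} \act \id_M)$ through this identification shows that at the free module the unit is simply $\phi \act \id_N : A \act N \to A' \act N$, which must therefore be an isomorphism in $\cM$ for every $N \in \cM$.

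The final step is to descend from the family $\{\phi \act \id_N\}_N$ to $\phi$ itself. Fixing any nonzero $N \in \cM$, Remark~\ref{rem:act-faithful} implies that the exact functor $(- \act N) : \cC \to \cM$ sends nonzero objects to nonzero objects, and hence reflects isomorphisms (apply it to $\ker \phi$ and $\coker \phi$); this forces $\phi$ to be an isomorphism in $\cC$. The main obstacle I anticipate is the coequalizer bookkeeping that identifies $A' \act_A (A \act N)$ with $A' \act N$ and the adjunction unit at $(A\act N,\, m_A \act \id_N)$ with $\phi \act \id_N$; once that computation is in place, everything else is a standard induction-restriction comparison.
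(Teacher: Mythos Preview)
Your proposal is correct and follows essentially the same route as the paper: both directions match, and for the converse you use the induction--restriction adjunction, evaluate the unit at free modules $A\act N$, identify it with $\phi\act\id_N$, and then reflect isomorphisms along the exact functor $(-\act N)$. The paper phrases the last step as ``exact and faithful implies reflects isomorphisms'' (invoking Remark~\ref{rem:act-faithful} for faithfulness), while you argue directly via $\ker\phi$ and $\coker\phi$; these are the same argument, and your anticipated coequalizer bookkeeping is exactly what the paper dispatches via Lemma~\ref{lem:coeq-id}.
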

\begin{proof}
($\Rightarrow$) As $\phi$ is an algebra map, so is its inverse $\phi^{-1}$. Now a direct check shows that the corresponding restriction functor $\text{Res}_{\phi^{-1}}$ is the inverse of $\text{Res}_{\phi}$.

($\Leftarrow$) Let $A'_{\phi}$ denote $A'$ considered as an $A'\text{-}A$-bimodule using the map $\phi$. Using \eqref{eq:coeq-over-A}, the functor 
\[ A'_{\phi}\act_A - : A\text{-}\Mod(\cM) \rightarrow A'\text{-}\Mod(\cM) \]
is defined. One can check that $A'_{\phi}\act_A -$ is the left adjoint of $\text{Res}_{\phi}$. Let $\eta$ denote the unit of this adjunction. For any $M\in \cM$, the component of $\eta$ at $A\act M \in A\text{-}\Mod(\cM)$, namely,
\[ \eta_{A\act M} : A\act M \xrightarrow{\eta_{A\act M}} 
\text{Res}_{\phi}(A'_{\phi} \act_A (A\act M)) 
\overset{\text{Lem}.~\textnormal{\ref{lem:coeq-id}}}{\cong} 
\text{Res}_{\phi}(A' \act M) \cong \text{Res}_{\phi}(A')\act M \]
is equal to $\phi\act\id_M$. As $\text{Res}_{\phi}$ is an equivalence, $\eta_{A\act M}$ is an isomorphism. 
Thus, $\phi \act M$, which is equal to $\eta_{A\act M}$, is an isomorphism. On the other hand, $(-\act M)$ is exact by definition, and is faithful by Remark~\ref{rem:act-faithful}. Since exact, faithful functors between finite abelian categories reflect isomorphisms (see, e.g., \cite[Theorem~7.1]{mitchell1965categories}), we obtain that $\phi$ is an isomorphism, as desired.
\end{proof}


\subsection{Monadicity result} \label{ss:monadicity}
Let $\cC$ be a finite tensor category, and $\cM$ a left $\cC$-module category. By \cite[Theorem~3.4]{shimizu2020further} we have functors $\rho:\cC \rightarrow \mathrm{Fun}(\cM,\cM) $ and its right adjoint $\rho^{\mathrm{ra}}$ given by 
 \[ \rho(X) = (X \act -),\qquad \rho^{\mathrm{ra}}(F) = \textstyle \int_{M\in\cM} \iHom(M,F(M)). \]
Now, we obtain the following monadicity result. 

\begin{proposition} \label{prop:monad}
Let $\cC$ be a finite tensor category and $\cM$ an indecomposable, exact left  $\cC$-module category. Then, the following statements hold.
\begin{enumerate}[\upshape (a)]
\item The adjunction $\rho\dashv\rho^{\ra}$ is monadic.

\smallskip

\item The EM-category on the monad $\rho^{\textnormal{ra}} \rho$ on $\cC$ is equivalent to $\mathsf{Mod}\text{-}\mathtt{E}_{\cM}(\cC)$, and the following comparison functor is an equivalence:
\[
\kappa: \mathsf{Fun}(\cM,\cM) \to \mathsf{Mod}\text{-}\mathtt{E}_{\cM}(\cC), \qquad F \mapsto \textstyle \int_{Q \in \cM} \underline{\textnormal{Hom}}_{\cM}(Q, F(Q)),
\]

\item The quasi-inverse of $\kappa$ is
\[
\Lambda:  \mathsf{Mod}\text{-}\mathtt{E}_{\cM}(\cC) \to \mathsf{Fun}(\cM,\cM), \qquad (X, \vartriangleleft_X^{\mathtt{E}_{\cM}}) \mapsto (X \act_{\hspace{0.03in} \mathtt{E}_{\cM}} -).
\]
Here, the action $\mathtt{E}_{\cM}$ on $M \in \cM$ is given by 
\[
\xymatrix@C=4pc{
\xi_M^{\mathtt{E}_{\cM}}:
\mathtt{E}_{\cM} \act M
\ar[r]^(.47){\pi_{\cM}(M) \; \act \; \id}
&
\underline{\textnormal{Hom}}(M,M) \act M
\ar[r]^(.65){\textnormal{ev}_{M,M}}
& M. 
}
\]
\end{enumerate}
\end{proposition}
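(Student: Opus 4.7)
The plan is to prove parts (b) and (c) directly; then (a) follows automatically, since the content of monadicity is precisely that the comparison functor---which we will identify with $\kappa$---is an equivalence, witnessed by the explicit quasi-inverse $\Lambda$.

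First, I will identify the monad $T := \rho^{\mathrm{ra}}\rho$ on $\cC$. Unpacking the definitions,
$T(X) = \rho^{\mathrm{ra}}(X \act -) = \int_{M \in \cM} \iHom(M, X \act M)$. By the rigidity identity \eqref{eq:internalHom-2}, the integrand is naturally isomorphic to $X \otimes \iHom(M, M)$ in $M$; and since $\cC$ is rigid, the functor $X \otimes -$ has right adjoint $X^* \otimes -$ and hence preserves limits, including ends. So $T(X) \cong X \otimes \mathtt{E}_{\cM}$, naturally in $X$. The unit $\eta^T$ and multiplication $\mu^T$ of $T$ come from the unit and counit of the adjunction $\rho \dashv \rho^{\mathrm{ra}}$; unfolding them through the universal property of $\mathtt{E}_{\cM}$, using Lemma~\ref{lem:EmAlg} together with \eqref{eq:evMMcoev1M}--\eqref{eq:compMNPiden}, one matches them precisely to $\id_X \otimes u_{\mathtt{E}_{\cM}}$ and $\id_X \otimes m_{\mathtt{E}_{\cM}}$. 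Thus $T \cong (-) \otimes \mathtt{E}_{\cM}$ as monads on $\cC$, so $\cC^{T} \cong \mathsf{Mod}\text{-}\mathtt{E}_{\cM}(\cC)$; moreover, the Eilenberg--Moore comparison functor $F \mapsto (\rho^{\mathrm{ra}}(F), \rho^{\mathrm{ra}}(\varepsilon_F))$ is, under this identification, precisely $\kappa$.

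Second, I will verify that $\Lambda$ is a quasi-inverse of $\kappa$. The formula for $\xi_M^{\mathtt{E}_{\cM}}$ makes every $M \in \cM$ into a left $\mathtt{E}_{\cM}$-module in $\cM$, by a direct computation from Lemma~\ref{lem:EmAlg} (associativity via $\mathrm{comp}_{M,M,M}$ and unitality via $\coev_{\one,M}$). Hence the coequalizer $X \act_{\mathtt{E}_{\cM}} -$ of \eqref{eq:coeq-over-A} is well-defined, right exact in its argument, and functorial in $X$, so $\Lambda$ is a functor. For $\Lambda \kappa \cong \id$: given $F$ and $N \in \cM$, the morphism $\rho^{\mathrm{ra}}(F) \act N \to F(N)$ defined by $\ev_{N, F(N)} \circ (\pi_{\cM}(N) \act \id_N)$ coequalizes the two parallel arrows presenting $\rho^{\mathrm{ra}}(F) \act_{\mathtt{E}_{\cM}} N$, by the definition of $\xi^{\mathtt{E}_{\cM}}_N$ and the dinaturality of $\pi_{\cM}$, and descends to an isomorphism $\rho^{\mathrm{ra}}(F) \act_{\mathtt{E}_{\cM}} N \isomorph F(N)$ via a coend reconstruction of the right exact functor $F$ in the spirit of \eqref{eq:coend-2}. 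For $\kappa\Lambda \cong \id$: unfolding $\int_{M} \iHom(M, X \act_{\mathtt{E}_{\cM}} M)$ via \eqref{eq:internalHom-2} and \eqref{eq:xi-equals-action}, one rewrites it as a coequalizer of $X \otimes \mathtt{E}_{\cM} \otimes \mathtt{E}_{\cM} \rightrightarrows X \otimes \mathtt{E}_{\cM}$ presented by $\vartriangleleft_X \otimes \id$ and $\id \otimes m_{\mathtt{E}_{\cM}}$, which is $X$ by Lemma~\ref{lem:coeq-id}, with $\mathtt{E}_{\cM}$-action recovering $\vartriangleleft_X$.

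The main obstacle is the bookkeeping in matching the monad structure of $T$ with the algebra structure of $\mathtt{E}_{\cM}$: the counit of $\rho \dashv \rho^{\mathrm{ra}}$ is assembled from the evaluation maps $\ev_{M,M}$, and its iteration producing $\mu^T$ must be tracked through the natural isomorphism $\iHom(M, X \act M) \cong X \otimes \iHom(M, M)$ and shown to realize $\mathrm{comp}_{M,M,M}$. The individual steps are formal universal-property manipulations, but the chain of identifications is the most delicate part of the argument; the isomorphisms $\Lambda \kappa \cong \id$ and $\kappa \Lambda \cong \id$ then reduce to routine (co)end calculations.
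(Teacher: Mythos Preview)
Your identification of the monad $T = \rho^{\mathrm{ra}}\rho$ with $(-)\otimes\mathtt{E}_{\cM}$ and of the comparison functor with $\kappa$ is correct, and matches the paper's endpoint. But the strategy of proving (a) by directly verifying that $\Lambda$ is a quasi-inverse has a real gap at the step $\Lambda\kappa \cong \id$. The claim that the canonical map $\rho^{\mathrm{ra}}(F)\act_{\mathtt{E}_{\cM}}N \to F(N)$ is an \emph{isomorphism} for every right exact $F$ is exactly the content of monadicity; your justification ``via a coend reconstruction in the spirit of \eqref{eq:coend-2}'' does not supply this, since \eqref{eq:coend-2} reconstructs $F$ from Hom-spaces, not from the $\mathtt{E}_{\cM}$-module $\rho^{\mathrm{ra}}(F)$. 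Crucially, nowhere in your argument do the hypotheses that $\cM$ is exact and indecomposable enter---yet without them the statement is false in general. There is a parallel problem in your $\kappa\Lambda\cong\id$ step: rewriting $\int_M \iHom(M, X\act_{\mathtt{E}_{\cM}}M)$ as a coequalizer asks both the end and the right adjoint $\iHom(M,-)$ to commute with a colimit, which they do not a priori.

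The paper proceeds in the opposite order. It first proves (a) by citing \cite[Theorem~3.4]{shimizu2020further}: exactness and indecomposability of $\cM$ force $\rho^{\mathrm{ra}}$ to be exact and faithful, and then Beck's monadicity theorem applies. This is where the hypotheses are spent. For (b), rather than matching the monad structure by hand, the paper observes that $\rho$ is strong monoidal with rigid image and invokes the Hopf monad machinery of \cite{bruguieres2007hopf,bruguieres2011hopf} to obtain the monoidal equivalence $\mathsf{Fun}(\cM,\cM)\simeq\mathsf{Mod}\text{-}\mathtt{E}_{\cM}(\cC)$ at once. Part (c) then falls out of Mac Lane's general description of the quasi-inverse to the comparison functor as a coequalizer, combined with Shimizu's explicit formula for the counit $\varepsilon$. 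Your direct computation of $T$ is a nice alternative to the Hopf monad route for (b), but you still need an independent proof of (a)---e.g.\ the Shimizu--Beck argument---before the quasi-inverse description in (c) is guaranteed to work.
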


\begin{proof}
(a) As $\cM$ is exact and indecomposable, $\rho^{\ra}$ is exact and faithful  by \cite[Theorem~3.4]{shimizu2020further}. As the underlying categories $\cC$ and $\mathrm{Fun}(\cM,\cM)$ are abelian and $\rho^{\ra}$ is additive, faithful and exact, Beck's monadicity theorem \cite[\S VI.7]{mac1998categories} implies that the adjunction $\rho\dashv\rho^{\ra}$ is monadic.

For the remaining proof, we basically mimic the argument provided in \cite[Proposition~6.1]{bruguieres2011exact}. The main difference is that here we apply the argument to the functor $\rho$ whose codomain may not be rigid while the result in \cite{bruguieres2011exact} is for rigid categories. To proceed, note that since $\rho$ is strong monoidal, $\rho^{\ra}$ is a lax monoidal functor. Thus, $\rho^{\ra}\dashv\rho$ is a so called monoidal adjunction. Consequently, $(\rho^{\ra})^{\oop} \dashv \rho^{\oop}$ is a comonoidal adjunction \cite[\S2.5]{bruguieres2011hopf}. Furthermore, the category $\cC$ is rigid and for any $X\in\cC$, the object $\rho(X)\in\mathrm{Fun}(\cM,\cM)$ is rigid with duals given by $\rho(X^*)$ and $\rho({}^*X)$. Thus, the conditions of \cite[Theorem~3.14]{bruguieres2007hopf} are satisfied. Consequently, the bimonad $(\rho^{\ra})^{\oop} \circ \rho^{\oop}$ is a Hopf monad on $\cC$, or equivalently, the adjunction $(\rho^{\ra})^{\oop} \dashv \rho^{\oop}$ is Hopf monoidal.

\smallskip

(b) With the above, we have  that $\rho^{\ra}(\id_{\cM})= \mathtt{E}_{\cM}$ is a commutative algebra in $\cZ(\cC)$ (with some half-braiding $\sigma$) by \cite[Theorem~6.6]{bruguieres2011hopf}. Using $\sigma$, the category $\Mod$-$\mathtt{E}_{\cM}(\cC)$ can be endowed with a tensor product. Also, \cite[Theorem~6.6]{bruguieres2011hopf} implies that $\mathrm{Fun}(\cM,\cM)$ and $\Mod$-$\mathtt{E}_{\cM}(\cC)$ are monoidally equivalent via the comparison functor $\kappa$ that is given by 
\[ \kappa(F) = \left( \rho^{\ra}(F), \; \rho^{\ra}_{F,\id_{\cM}}: \rho^{\ra}(F) \otimes \mathtt{E}_{\cM} \rightarrow \rho^{\ra}(F) \right) \]
where $\rho^{\ra}_{\text{-},\text{-}}$ is the monoidal structure of $\rho^{\ra}$. Lastly, $\rho^{\ra}\rho$ is monoidally isomorphic to the free module functor $\cC \rightarrow \Mod\text{-}\mathtt{E}_{\cM}(\cC)$, where $X$ is sent to $X\otimes \mathtt{E}_{\cM}$. Thus, the EM-category on $\rho^{\ra}\rho$ is equivalent to $\mathsf{Mod}$-$\mathtt{E}_{\cM}(\cC)$, as desired.

\smallskip

(c) By the proof of \cite[\S VI.7, Theorem~1]{mac1998categories}, the value of the quasi-inverse $\Lambda$ of $\kappa$ at  $(X, \vartriangleleft_X)$ in $\mathsf{Mod}$-$\mathtt{E}_{\cM}$ is given by the coequalizer of the following maps:
\[ 
\xymatrix{
\varepsilon_{\rho(X)},\; \rho(\vartriangleleft_X^{\mathtt{E}_{\cM}}): 
\rho \rho^{\ra} \rho (X) \cong \rho(X \otimes \mathtt{E}_{\cM}) \ar[r]
& \rho(X).
}
\]
Here, $\varepsilon$ is counit of the adjunction $\rho\dashv\rho^{\ra}$. Using the description of $\varepsilon$ provided in \cite[\S3.3]{shimizu2020further}, we get that $\varepsilon_{\rho(X)}(M)$ is given by
{\small
\[
\xymatrix@R=1pc@C=5pc{
\varepsilon_{\rho(X)}(M):  \textstyle  \rho \rho^{\ra} \rho (X)(M) = \left(\int_{N\in\cM} \iHom(N,X\act N)\right) \act M  \ar[r]^(.66){\pi_\cM(M) \; \act\;  \id}
& \iHom(M,X\act M)\act M  \ar[d]^{\ev_{X\act M,M}}\\
 & X\act M =\rho(X)(M).
 }
\]
}

\vspace{-.1in}

\noindent Using the isomorphism 
\[ 
\textstyle X\otimes \mathtt{E}_{\cM} 
\; = \; X\otimes \int_{N\in\cM} \iHom(N,N) \; \cong \; \int_{N\in\cN} X\otimes \iHom(N,N) \; \overset{\eqref{eq:internalHom-2}}{\cong} \; \int_{N\in\cM} \iHom(N,X\act N),
\]
and naturality, the map $\varepsilon_{\rho(X)}(M): \rho \rho^{\ra} \rho (X)(M) \rightarrow \rho(X)(M)$ can be identified with:
\[ 
\xymatrix@C=7pc{
f_1: X \act (\mathtt{E}_{\cM} \act M) \ar[r]^(.48){\id \; \act\;  \pi_\cM(M)\;  \act \; \id}
& X\act(\iHom(M,M)\act M) \ar[r]^(.6){\id  \; \act\;  \ev_{M,M}} 
& X\act M.
}
\]
On the other hand, the map $\rho(\vartriangleleft_X^{\mathtt{E}_{\cM}})$ can be identified with:

\vspace{-.2in}

\[ 
\xymatrix@C=5pc{
f_2: X \act (\mathtt{E}_{\cM} \act M) \ar[r]^{\sim} 
& (X \otimes \mathtt{E}_{\cM})\act M \ar[r]^(.57){\vartriangleleft_X^{\mathtt{E}_{\cM}} \; \act \; \id} 
& X\act M.
}
\]
Now comparing with \eqref{eq:coeq-over-A}, it follows that $\Lambda(X)(M)=\text{coeq}(f_1,f_2)$ equals $X\act_{\mathtt{E}_{\cM}} M$.
\end{proof}


\subsection{Connection between Deligne products and EM-categories} \label{ss:con-DelEM}
Recall the end objects $\mathtt{E}_\cM$ from $\S$\ref{ss:Endmod}. Our goal here is to express the EM-category $\mathtt{E}_\cM$-$\mathsf{Mod}(\cM)$ as a Deligne product of categories attached to $\cM$. To proceed, note that $\Fun(\cM,\cM)$ is a left $\cC$-module category via 
\[
(X\act T)(M):=X\act T(M)\]
for $X\in\cC,M\in\cM$ and $T\in\Fun(\cM,\cM)$.
We will need the following preliminary result.

\begin{lemma}\label{lem:key-prop}
We have the following equivalence of categories
\begin{equation*}
\Phi: \cM  \boxtimes  \mathsf{Fun}_{\cC|}(\cM, \cM) 
\xrightarrow{\sim}
\mathsf{Fun}_{\cC|}(\mathsf{Fun}(\cM, \cM), \cM), \quad M\boxtimes F \mapsto [T \mapsto F(T(M))],
\end{equation*}
Its quasi-inverse is given by
\begin{equation*}
\widehat{\Phi}:  
\mathsf{Fun}_{\cC|}(\mathsf{Fun}(\cM, \cM), \cM)
\rightarrow \cM  \boxtimes  \mathsf{Fun}_{\cC|}(\cM, \cM), \quad 
G \mapsto \textstyle \int_{M\in\cM} M \boxtimes F_{G,M}(?), 
\end{equation*}
for $F_{G,M}(N) := G( \Hom_{\cM}(-,M)^* \otimes_{\kk}  N) \in \cM$, where  $N \in \cM$.
\end{lemma}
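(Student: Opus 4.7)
The plan is to verify well-definedness of $\Phi$ and $\widehat{\Phi}$, then establish the two round-trip natural isomorphisms using the coend identities of Proposition~\ref{prop:coend}.

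For well-definedness of $\Phi$: the assignment $T \mapsto F(T(M))$ is right exact (since $F$ is right exact and evaluation at $M$ is exact) and carries a $\cC$-module structure via the pointwise $\cC$-action $(X \act T)(P) = X \act T(P)$ on $\mathsf{Fun}(\cM, \cM)$ composed with the structural isomorphism $s_{X, T(M)}: F(X \act T(M)) \overset{\sim}{\to} X \act F(T(M))$ of $F$. Extend $\Phi$ by the universal property of the Deligne product. For $\widehat{\Phi}$: the functor $F_{G, M}(N) = G(\Hom_\cM(-, M)^* \otimes_\kk N)$ is right exact in $N$ and is a $\cC$-module functor, via the natural isomorphism $\Hom_\cM(-, M)^* \otimes_\kk (X \act N) \cong X \act (\Hom_\cM(-, M)^* \otimes_\kk N)$ in $\mathsf{Fun}(\cM, \cM)$ (which uses that the $\cC$-action there is pointwise) combined with the $\cC$-module structure of $G$. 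The end then exists in the finite $\kk$-linear abelian category $\cM \boxtimes \mathsf{Fun}_{\cC|}(\cM, \cM)$.

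For $\widehat{\Phi} \circ \Phi \cong \mathrm{id}$: by $\kk$-linearity of $F$, a direct computation yields
$$F_{\Phi(M \boxtimes F), N}(P) \;=\; F(\Hom_\cM(M, N)^* \otimes_\kk P) \;\cong\; \Hom_\cM(M, N)^* \otimes_\kk F(P),$$
so $F_{\Phi(M \boxtimes F), N} \cong \Hom_\cM(M, N)^* \otimes_\kk F$ in $\mathsf{Fun}_{\cC|}(\cM, \cM)$, naturally in $N$. Therefore
$$\widehat{\Phi}(\Phi(M \boxtimes F)) \;\cong\; \textstyle\int_{N \in \cM} N \boxtimes (\Hom_\cM(M, N)^* \otimes_\kk F) \;\cong\; M \boxtimes F,$$
where the last isomorphism is identity~\eqref{eq:coend-1} applied with $\cA = \cM^{\op}$ and $\cB = \mathsf{Fun}_{\cC|}(\cM, \cM)$ (after matching the end $\int_{N \in \cM}$ with the end $\int_{C \in \cM^{\op}}$ under the natural relabeling of the two slots of the underlying bifunctor). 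Naturality extends this to $\widehat{\Phi}\circ\Phi\cong \mathrm{id}$ via Deligne universality.

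For $\Phi \circ \widehat{\Phi} \cong \mathrm{id}$, I would factor both $\Phi$ and $\widehat{\Phi}$ through the intermediate equivalence $\mathsf{Fun}(\cM^{\op}, \mathsf{Fun}_{\cC|}(\cM, \cM)) \simeq \cM \boxtimes \mathsf{Fun}_{\cC|}(\cM, \cM)$ from Proposition~\ref{prop:coend}: under this equivalence, $\widehat{\Phi}(G)$ corresponds to the functor $M \mapsto F_{G, M}$, while $\Phi(M \boxtimes F)$ corresponds to $C \mapsto \Hom_\cM(M, C)^* \otimes_\kk F$. Since $\Phi$ is part of an equivalence it preserves the defining end, and combining the formula on simple tensors $\Phi(M \boxtimes F)(T) = F(T(M))$ with identity~\eqref{eq:coend-2} (which recovers $T$ from its values) then gives $\Phi(\widehat{\Phi}(G))(T) \cong G(T)$, naturally in $T$. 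The main obstacle is this last identification: commuting $\Phi$ with the end defining $\widehat{\Phi}(G)$ and unwinding the Yoneda-type formula~\eqref{eq:coend-2}. The cleanest route is precisely the factorization through the intermediate category above, reducing both round-trips to the two identities of Proposition~\ref{prop:coend}.
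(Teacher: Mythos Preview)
Your approach is essentially the paper's: verify that $\Phi$ and $\widehat{\Phi}$ land in the correct categories, then establish both round-trip isomorphisms via the end identities \eqref{eq:coend-1} and \eqref{eq:coend-2} of Proposition~\ref{prop:coend}. The well-definedness checks and the $\widehat{\Phi}\circ\Phi$ computation match the paper almost line for line.

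The one place you diverge is the $\Phi\circ\widehat{\Phi}$ direction, and there your argument has a slip: the clause ``since $\Phi$ is part of an equivalence it preserves the defining end'' is circular, as that equivalence is exactly what you are proving. The detour through the intermediate category $\mathsf{Fun}(\cM^{\op},\mathsf{Fun}_{\cC|}(\cM,\cM))$ is also unnecessary. The paper instead carries out the direct computation you yourself describe at the end: it writes
\[
(\Phi\circ\widehat{\Phi})(G) \;=\; \bigl[T\mapsto \textstyle\int_{M\in\cM} G(\Hom_\cM(-,M)^*\otimes_\kk T(M))\bigr] \;\overset{\eqref{eq:coend-2}}{\cong}\; [T\mapsto G(T)] \;=\; G,
\]
invoking \eqref{eq:coend-2} (applied inside $G$ to the functor $T$) rather than any abstract end-preservation property of $\Phi$. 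So your instinct for the computation is right; just drop the circular justification and run the identity \eqref{eq:coend-2} directly.
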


\begin{proof}
To start, note that $\Phi(M\boxtimes F)$ is a left $\cC$-module functor as, for $T\in\mathsf{Fun} (\cM,\cM)$, we get:
\begin{equation} \label{eq:sec4prelim}
\begin{array}{rll}
X\act \left[\Phi(M\boxtimes F)(T)\right] & = X\act F(T(M))& \cong F[X\act T(M)]\\[.2pc]
& = F\left[ (X\act T)(M) \right] &= [\Phi(M\boxtimes F)](X\act T).
\end{array}
\end{equation}
Thus, $\Phi$ is well defined.

To show that $\widehat{\Phi}$ is well-defined, take 
$G\in\ \mathsf{Fun}_{\cC|}(\mathsf{Fun}(\cM, \cM), \cM)$ and $M \in \cM$. Then, the functor $F_{G,M}(?): \cM \to \cM$ is a left $\cC$-module functor because
\[
\begin{array}{rll}
F_{G,M}(X\act N) & = G(\Hom_{\cM}(-,M)^*\otimes_{\kk} (X\act N)) 
 &\stackrel{(\dagger)}{\cong} G(X\act (\Hom_{\cM}(-,M)^*\otimes_{\kk} N)) \\[.2pc]
& \cong X \act G(\Hom_{\cM}(-,M)^*\otimes_{\kk} N) &= X\act F_{G,M}(N). 
\end{array}
\]
The isomorphism $(\dagger)$ holds because the $\cC$-action and $\FdVec$-action on $\cM$ commute. The last isomorphism holds because $G$ is a left $\cC$-module functor. Thus, $\widehat{\Phi}$ is well defined.

Next we show that $\widehat{\Phi}$ is the quasi-inverse of $\Phi$:
\begin{align*}
  (\widehat{\Phi}\circ\Phi)(M\boxtimes F(?)) 
  & \; = \; \widehat{\Phi}\left([T\mapsto F(T(M))]\right)
 \;  =  \; \textstyle  \int_{P\in\cM} P \boxtimes F\left( \Hom_{\cM}(P,M)^* \otimes_{\kk} \; ? \hspace{0.01in}\right) \\
  & \; \stackrel{(\dagger)}{\cong} \;  \textstyle  \int_{P\in\cM} P \boxtimes \Hom_{\cM}(P,M)^* \otimes_{\kk} F(?) 
 \; \stackrel{\eqref{eq:coend-1}}{\cong} \; M \boxtimes F(?).
\end{align*}
Here, the isomorphism $(\dagger)$ holds because $F$ is $\kk$-linear.
On the other hand,
\begin{align*}
  (\Phi\circ \widehat{\Phi})(G) 
  &= \textstyle \Phi\left(\int_{M\in\cM} M \boxtimes G( \Hom_{\cM}(-,M)^* \otimes_{\kk} \; ? \hspace{0.01in})\right)\\
  & = \textstyle  [T \mapsto \int_{M\in\cM} G( \Hom_{\cM}(-,M)^* \otimes_{\kk} T(M) ) ]
  \; \stackrel{\eqref{eq:coend-2}}{\cong}\;  [T \mapsto G(T(-))] = G.
\end{align*}
Hence, the proof is finished.
\end{proof}

\begin{remark}
The above result can also be obtained using the {\it module ends} introduced by Bortolussi-Mombelli \cite{bortolussi2021coend}. 
\end{remark}

Our main result here is the following.

\begin{proposition} \label{prop:HM} 
We have an equivalence of categories:
\[
H_{\cM}: \cM \boxtimes  \mathsf{Fun}_{\cC|}(\cM, \cM) \longrightarrow {\tt E}_{\cM}\text{-}\mathsf{Mod}(\cM), \qquad
M \boxtimes (F,s) \mapsto (F(M), \; \xi^{{\tt E}_{\cM}}_{F(M)}).
\]
Here, $\xi^{{\tt E}_{\cM}}_{F(M)}$ is defined as follows:

\vspace{-.2in}

{\small
\[
\xymatrix@C=7pc@R=1.3pc{
{\tt E}_{\cM} \act F(M)
\ar@{-->}[r]^{\xi^{{\tt E}_{\cM}}_{F(M)}}
\ar[d]_{s^{-1}_{\mathtt{E}_{\cM},M}}
&
F(M)
\\
F(\mathtt{E}_{\cM}\act M) 
\ar[r]^(.42){F(\pi_{\cM}(M) \; \act \; \id)}
&
 F\bigl(\underline{\textnormal{Hom}}(M,M)\act M\bigr) \ar[u]_{F(\ev_{M,M})}.
}
\]
}

\medskip

\noindent For $[f:M \to M'] \boxtimes [\phi:(F,s) \Rightarrow (F',s')] \in \cM  \boxtimes   \mathsf{Fun}_{\cC|}(\cM, \cM)$, we define:


{\small
\[
\xymatrix@C=6pc{
H_{\cM}(f \boxtimes \phi) \; : \bigl(F(M), \; \xi^{{\tt E}_{\cM}}_{F(M)}\bigr)
\ar[r]^(.57){F'(f) \; \circ  \; \phi_M}
& 
\bigl(F'(M'), \; \xi^{{\tt E}_{\cM}}_{F'(M')}\bigr).
}
\]
}
\end{proposition}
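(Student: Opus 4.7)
The plan is to realize $H_{\cM}$ as a composition of three equivalences already established earlier, and then to check that unwinding this composite reproduces the explicit formulas in the statement. Concretely, I would form the chain
\[
\cM \boxtimes \mathsf{Fun}_{\cC|}(\cM, \cM)
\xrightarrow{\;\Phi\;} \mathsf{Fun}_{\cC|}(\mathsf{Fun}(\cM, \cM), \cM)
\xrightarrow{(-) \circ \Lambda} \mathsf{Fun}_{\cC|}(\mathsf{Mod}\text{-}{\tt E}_{\cM}(\cC), \cM)
\xrightarrow{\;\mathrm{DN}\;} {\tt E}_{\cM}\text{-}\mathsf{Mod}(\cM),
\]
where $\Phi$ is the equivalence of Lemma~\ref{lem:key-prop}, the middle arrow is precomposition with the quasi-inverse $\Lambda$ of the monadic equivalence $\kappa$ from Proposition~\ref{prop:monad}, and the last arrow is the Davydov--Nikshych equivalence of Lemma~\ref{lem:DN}. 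Each factor is a $\kk$-linear equivalence by its respective citation, so the composite is one as well.

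Next, I would trace the composite on an object $M \boxtimes (F,s)$: under $\Phi$ it becomes the $\cC$-module functor $[T \mapsto F(T(M))]$; precomposing with $\Lambda$, which sends $(X, \vartriangleleft_X^{{\tt E}_{\cM}})$ to $(X \act_{{\tt E}_{\cM}} -)$, yields the $\cC$-module functor $G: (X, \vartriangleleft_X^{{\tt E}_{\cM}}) \mapsto F(X \act_{{\tt E}_{\cM}} M)$; and the Davydov--Nikshych equivalence extracts the ${\tt E}_{\cM}$-module whose underlying object is $G({\tt E}_{\cM}) = F({\tt E}_{\cM} \act_{{\tt E}_{\cM}} M)$, identified with $F(M)$ via $F(\alpha)$ from Lemma~\ref{lem:coeq-id}. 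On morphisms $f \boxtimes \phi$, the composite produces $F'(f) \circ \phi_M$, which matches the description in the statement; the compatibility with the ${\tt E}_{\cM}$-action is a short diagram chase using the naturality of $\phi$ together with the $\cC$-module compatibility \eqref{eq:braidmodfunc}-style axioms \eqref{eq:mod-s-axioms}.

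The main obstacle is to check that the induced ${\tt E}_{\cM}$-action on $F(M)$ agrees with the explicit formula $F(\ev_{M,M}) \circ F(\pi_{\cM}(M) \act \id_M) \circ s^{-1}_{{\tt E}_{\cM},M}$ given in the statement. By Lemma~\ref{lem:DN}, the induced action on $G({\tt E}_{\cM})$ is $G(m_{{\tt E}_{\cM}})$ composed with the canonical isomorphism ${\tt E}_{\cM} \act G({\tt E}_{\cM}) \cong G({\tt E}_{\cM} \otimes {\tt E}_{\cM})$; unpacking $G$, this becomes $F(m_{{\tt E}_{\cM}} \act_{{\tt E}_{\cM}} \id_M) \circ s^{-1}$. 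Conjugating by $F(\alpha)$ and invoking identity \eqref{eq:xi-equals-action} rewrites $\alpha \circ (m_{{\tt E}_{\cM}} \act_{{\tt E}_{\cM}} \id_M) \circ (\id_{{\tt E}_{\cM}} \act \alpha^{-1})$ as $\xi^{{\tt E}_{\cM}}_M$, which by Proposition~\ref{prop:monad}(c) equals $\ev_{M,M} \circ (\pi_{\cM}(M) \act \id_M)$. The only delicate point is tracking $s$ through the coequalizer-defined object $\act_{{\tt E}_{\cM}}$, which is handled by naturality of $s$ in both variables and the associativity axiom \eqref{eq:mod-s-axioms}. Putting these identifications together recovers the stated formula for $\xi^{{\tt E}_{\cM}}_{F(M)}$, completing the identification of the composite with $H_{\cM}$ and hence showing that $H_{\cM}$ is the desired equivalence.
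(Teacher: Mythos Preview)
Your proposal is correct and follows essentially the same route as the paper: factor $H_{\cM}$ as the composite of the equivalence $\Phi$ from Lemma~\ref{lem:key-prop}, precomposition with $\Lambda$ from Proposition~\ref{prop:monad}(c), and the Davydov--Nikshych equivalence of Lemma~\ref{lem:DN}, then trace through on objects and use \eqref{eq:xi-equals-action} together with the description of $\xi^{{\tt E}_{\cM}}_M$ in Proposition~\ref{prop:monad}(c) to identify the induced ${\tt E}_{\cM}$-action with the stated formula. One small remark: your parenthetical pointer to \eqref{eq:braidmodfunc} is off---that axiom concerns \emph{braided} module functors and is not used here; the relevant ingredients are naturality of $s$, axiom~\eqref{eq:mod-s-axioms}, and the fact that $\phi$ is a module natural transformation.
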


\begin{proof}
We will show that $H_{\cM}$ is the composition of equivalences below, thus is an equivalence itself. 

\vspace{-.15in}

{\small
\[
\xymatrix@C=1.7pc@R=1pc{
\cM  \boxtimes  \mathsf{Fun}_{\cC|}(\cM, \cM)
\ar[r]^(.46){\textnormal{(i)}}_(.46){\sim}
&
\mathsf{Fun}_{\cC|}(\mathsf{Fun}(\cM, \cM), \cM)
\ar[r]^(.50){\textnormal{(ii)}}_(.50){\sim}
&
\mathsf{Fun}_{\cC|}(\mathsf{Mod}\text{-}{\mathtt E}_{\cM}(\cC), \cM)
\ar[r]^(.60){\textnormal{(iii)}}_(.60){\sim}
&
{\mathtt E}_{\cM}\text{-}\mathsf{Mod}(\cM).
}
\]
}

\noindent The equivalence (i) is the map $\Phi$ from Lemma~\ref{lem:key-prop}, (ii) is given by pre-composition with the equivalence $\Lambda: \mathsf{Mod}\text{-}\mathtt{E}_{\cM}(\cC) \overset{\sim}{\to} \mathsf{Fun}(\cM, \cM)$ from Proposition~\ref{prop:monad}(c),
and (iii) is defined by evaluation at $\mathtt{E}_{\cM}$ according to Lemma~\ref{lem:DN}. Combining these yields:

\vspace{-.1in}

{\small
\begin{align*}
    M \boxtimes F 
    & \xmapsto{\text{(i)}}  \left[ T \mapsto F(T(M)) \right] 
    \xmapsto{\text{(ii)}} \left[ X  
    \mapsto F(X \act_{\hspace{0.03in}{\mathtt E}_{\cM}} M) \right] 
    \xmapsto{\text{(iii)}} F({\mathtt E}_{\cM} \act_{\hspace{0.03in}{\mathtt E}_{\cM}} M ) \; \stackrel{\text{Lem.}~\textnormal{\ref{lem:coeq-id}}}{\cong} \; F(M).
\end{align*}
}

\smallskip
 
To finish the proof, we also need to show that the ${\mathtt E}_{\cM}$-action on $F(M)$ obtained using the maps above is same as $\xi_{F(M)}^{{\mathtt E}_{\cM}}$. 
First, after applying the equivalence (i), the $\cC$-module structure of the resulting functor is described in Lemma~\ref{lem:key-prop}; see \eqref{eq:sec4prelim}. Next, after applying (ii), the $\cC$-module structure of the resulting functor $[ X \mapsto F(X \act_{\hspace{0.03in}{\mathtt E}_{\cM}} M) ] $ is given using (i). Explicitly, it is:
\[ 
Y\act F(X \act_{\hspace{0.03in}{\mathtt E}_{\cM}} M) \xrightarrow{s^{-1}} F( Y \act ( X \act_{\hspace{0.03in}{\mathtt E}_{\cM}} M)) \cong F((Y\otimes X) \act_{\hspace{0.03in}{\mathtt E}_{\cM}} M ). 
\]
Now, by Lemma~\ref{lem:DN}, the $\mathtt{E}_{\cM}$-action on $F(M)$ via (i), (ii), (iii), is given by: 
{\small
\begin{align*}
\zeta_{F(M)}^{\mathtt{E}_{\cM}}: \mathtt{E}_{\cM}\act F(M) 
\stackrel{\text{Lem}~\textnormal{\ref{lem:coeq-id}}}{\cong} 
\mathtt{E}_{\cM}\act F(\mathtt{E}_{\cM}\act_{\hspace{0.03in}\mathtt{E}_{\cM}} M) 
& \xrightarrow{s^{-1}} F(\mathtt{E}_{\cM} \act \mathtt{E}_{\cM}\act_{\hspace{0.03in}\mathtt{E}_{\cM}} M) 
\cong F((\mathtt{E}_{\cM} \otimes \mathtt{E}_{\cM}) \act_{\hspace{0.03in}\mathtt{E}_{\cM}} M)\\ 
& \xrightarrow{F(m_{\mathtt{E}_{\cM}}\act_{\hspace{0.03in}\mathtt{E}_{\cM}}\id_{M})}
F(\mathtt{E}_{\cM} \act_{\hspace{0.03in}\mathtt{E}_{\cM}} M) \stackrel{\text{Lem}~\textnormal{\ref{lem:coeq-id}}}{\cong} F(M).    
\end{align*}
}

\noindent Finally, considering the isomorphism $\alpha: \mathtt{E}_\cM \act_{\mathtt{E}_\cM} M \xrightarrow{\sim} M$ from $\S$\ref{ss:EMcat}, and  the left $\mathtt{E}_\cM$-module $M$ with action $\beta_M^{\mathtt{E}_\cM}:= \ev_{M,M}\circ(\pi_{\cM}(M)\act\id_M) : \mathtt{E}_\cM \act M \to M$, we get:
\[
\zeta_{F(M)}^{\mathtt{E}_\cM} 
\; \overset{\text{s nat'l}}{=} \; 
F\bigl(\alpha \; (m_{\mathtt{E}_\cM} \act_{\mathtt{E}_\cM} \id_M) \; (\id_{\mathtt{E}_\cM} \act \alpha^{-1})\bigr) s^{-1}_{{\mathtt{E}_\cM}, M}
\; \overset{\textnormal{\eqref{eq:xi-equals-action}}}{=} \; 
F(\beta_M^{\mathtt{E}_\cM}) s^{-1}_{{\mathtt{E}_\cM}, M} \; = \; \xi_{F(M)}^{\mathtt{E}_\cM}.
\]
This completes the proof.
\end{proof}


\subsection{Connections between reflective centers and EM-categories} \label{ss:con-refEM}
Recall the reflective centers $\cE_\cC(\cM)$ from $\S$\ref{sec:refcenter}, the coend objects $\mathtt{C}_\cC$ from $\S$\ref{ss:coendBTC}, and the EM-categories from $\S$\ref{ss:EMcat}. Our main result here connects these constructions as follows.

\begin{proposition} \label{prop:HC} 
The following statements hold.
\begin{enumerate}[\upshape (a)]
    \item We have a functor
\[
H_{\cC}: \cE_{\cC}(\cM) \longrightarrow {\tt C}_{\cC}\text{-}\mathsf{Mod}(\cM), \qquad
(M,  e^M) \mapsto (M, \xi^{{\tt C}_{\cC}}_M), 
\]
where $\xi^{{\tt C}_{\cC}}_M$ is defined by the universal property of ${\tt C}_{\cC}$, for $X \in \cC$: 

\vspace{-.1in}

{\small
\[
\xymatrix@C=5pc@R=1.3pc{
X^* \otimes X \act M
\ar[r]^(.57){\iota_{\cC}(X) \; \act \; \id}
\ar[rd]_{\id \; \otimes \; e^M_{X}}
&
{\tt C}_{\cC} \act M
\ar@{-->}[r]^{\xi^{{\tt C}_{\cC}}_{M}}
& 
M
\\
& 
X^* \otimes X \act M
\ar[ru]_(.6){\ev^L_X \; \act\; \id}
&
}
\]
}
\noindent Here, $H_{\cC}(f) := f$, for all $f:(M, e^M) \to (M', e^{M'}) \in \cE_{\cC}(\cM)$.

\medskip

 \item The functor $H_{\cC}$ above is an equivalence of categories, with quasi-inverse: 
 \[
 \widehat{H}_{\cC}: {\tt C}_{\cC}\text{-}\mathsf{Mod}(\cM) \longrightarrow \cE_{\cC}(\cM), \qquad (M, \xi^{{\tt C}_{\cC}}_M) \mapsto (M,  e^M).
 \]
Here, $e^M$ is defined as follows, for $X \in \cC$: 
{\small
\[
\xymatrix@C=4.5pc{
e^M_X: X \act M 
\ar[r]^(.46){\coev_X^L \; \otimes\; \id\; \act \;\id}
& X \otimes X^* \otimes X \act M 
\ar[r]^(.56){\id \;\otimes \; \iota_{\cC}(X)\; \act\; \id}
& X \otimes {\tt C}_{\cC} \act M 
\ar[r]^(.55){\id \;\otimes \;\xi_M^{{\tt C}_{\cC}}}
& X \act M.
}
\]
}
\noindent Moreover, $\widehat{H}_{\cC}(g) := g$, for all $g:(M, \xi_M^{\mathtt{C}_{\cC}}) \to (M', \xi_{M'}^{\mathtt{C}_{\cC}}) \in {\tt C}_{\cC}\text{-}\mathsf{Mod}(\cM)$.
 \end{enumerate}
\end{proposition}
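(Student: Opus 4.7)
For part (a), the morphism $\xi^{\mathtt{C}_{\cC}}_M: \mathtt{C}_{\cC} \act M \to M$ is determined by the universal property of the coend $\mathtt{C}_{\cC} = \int^{X \in \cC} X^* \otimes X$, so I first verify that the collection $\{(\ev^L_X \act \id_M)(\id_{X^*} \otimes e^M_X) : X^* \otimes X \act M \to M\}_{X \in \cC}$ is dinatural in $X$. For $f: X \to Y$ in $\cC$, dinaturality follows by combining the identity $\ev^L_X (f^* \otimes \id_X) = \ev^L_Y (\id_{Y^*} \otimes f)$ from \eqref{eq:dualmor} with the naturality of the reflection $e^M$ in $X$. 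Next I check the EM-module axioms \eqref{eq:EMobject}. The unit axiom reduces to $e^M_{\one} = \id_M$, which follows by setting $X = Y = \one$ in the reflection axiom \eqref{eq:reflection} (giving $e^M_{\one} = e^M_{\one} \circ e^M_{\one}$) together with invertibility of $e^M_{\one}$. For the associativity axiom $\xi^{\mathtt{C}_{\cC}}_M \circ (m_{\mathtt{C}_{\cC}} \act \id_M) = \xi^{\mathtt{C}_{\cC}}_M \circ (\id_{\mathtt{C}_{\cC}} \act \xi^{\mathtt{C}_{\cC}}_M)$, I precompose with $\iota_{\cC}(X) \otimes \iota_{\cC}(Y) \act \id_M$ (universal over $X, Y$ since $\mathtt{C}_{\cC} \otimes \mathtt{C}_{\cC}$ is itself a coend) and match the two sides using the explicit formula for $m_{\mathtt{C}_{\cC}}$ from Lemma~\ref{lem:EcHopfAlg}(a) on the left against the reflection axiom \eqref{eq:reflection} for $e^M_{X \otimes Y}$ on the right. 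Functoriality of $H_\cC$ on morphisms follows directly from the intertwining condition \eqref{eq:ECMmorp} and coend universality.

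For part (b), I first check that $e^M$ as defined from $\xi^{\mathtt{C}_{\cC}}_M$ satisfies the reflection axiom \eqref{eq:reflection}; this is essentially the associativity computation of part (a) read in reverse. Then I verify that $H_\cC$ and $\widehat{H}_\cC$ are mutually inverse on objects. For $\widehat{H}_\cC \circ H_\cC$ at $(M, e^M)$, substituting the defining formula $\xi^{\mathtt{C}_{\cC}}_M \circ (\iota_{\cC}(X) \act \id_M) = (\ev^L_X \act \id_M)(\id_{X^*} \otimes e^M_X)$ into the expression for the recovered reflection collapses to $e^M_X$ via the zig-zag identity $(\id_X \otimes \ev^L_X)(\coev^L_X \otimes \id_X) = \id_X$. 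For $H_\cC \circ \widehat{H}_\cC$ at $(M, \xi^{\mathtt{C}_{\cC}}_M)$, by coend universality it suffices to check equality after precomposing with $\iota_{\cC}(X) \act \id_M$; substituting the defining formula for $e^M_X$ into $(\ev^L_X \act \id_M)(\id_{X^*} \otimes e^M_X)$ and commuting $\xi^{\mathtt{C}_{\cC}}_M$ outward via bifunctoriality of $\act$ reduces the remaining composition $(\ev^L_X \otimes \id_{\mathtt{C}_{\cC}})(\id_{X^* \otimes X} \otimes \iota_{\cC}(X))(\id_{X^*} \otimes \coev^L_X \otimes \id_X)$ in $\cC$ to $\iota_{\cC}(X)$ via the dual zig-zag identity $(\ev^L_X \otimes \id_{X^*})(\id_{X^*} \otimes \coev^L_X) = \id_{X^*}$, recovering the original action. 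Functoriality of $\widehat{H}_\cC$ on morphisms is immediate.

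The main obstacle I anticipate is the associativity verification in part (a). The formula for $m_{\mathtt{C}_{\cC}}$ involves the nontrivial composite $(c_{X^*, Y^*} \otimes \id_{X \otimes Y})(\id_{X^*} \otimes c_{X, Y^*} \otimes \id_Y)$ of braidings on dual factors, while the reflection axiom for $e^M$ involves $c_{Y, X}^{\pm 1}$ acting between $Y \in \cC$ and $X \act M$; matching these correctly requires careful graphical bookkeeping using the hexagon axioms \eqref{eq:braid1}--\eqref{eq:braid2} and dinaturality of $\iota_{\cC}$. Once this identity is in hand, the analogous computation in part (b), that the recovered $e^M$ satisfies the reflection axiom, follows by reversing the argument.
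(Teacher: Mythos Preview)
Your proposal is correct and follows essentially the same approach as the paper: both verify the EM-axioms \eqref{eq:EMobject} by precomposing with $\iota_{\cC}(X) \otimes \iota_{\cC}(Y) \act \id_M$ and matching the $m_{\mathtt{C}_{\cC}}$ formula from Lemma~\ref{lem:EcHopfAlg}(a) against the reflection axiom \eqref{eq:reflection} via hexagon axioms and naturality, then establish the quasi-inverse via the zig-zag identities. Your derivation of $e^M_{\one} = \id_M$ from $e^M_{\one} = e^M_{\one} \circ e^M_{\one}$ is in fact more careful than the paper's citation of \eqref{eq:braidmod-unit}, which strictly speaking concerns the module braiding rather than an arbitrary reflection.
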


\begin{proof}
We leave the proof to the reader; detailed computations can be found in the appendix of the ArXiv version 1 of this work.
\end{proof}


\section{Nondegenerate module categories} 
\label{sec:nondeg-main}

In this section, we present the the main result of this article. The conditions of when a module category is considered to be factorizable and be nondegenerate are introduced in $\S$\ref{ss:fmc} and $\S$\ref{ss:nmc}, respectively. Then, it is established that these conditions are equivalent in $\S$\ref{ss:mainthm}. Lastly, we discuss when module categories are, in a sense, weakly factorizable and have trivial symmetric center in $\S$\ref{ss:wf} and $\S$\ref{ss:triv}, respectively; ties to nondegeneracy are examined there as well.

\pagebreak

\begin{hypothesis} \label{hyp:Sec4} 
Throughout the section, the following items are fixed:
\begin{enumerate}[(a)]
\item $\cC:=(\cC, \otimes, \one, c)$ is a braided finite tensor category;
\item $\cM:=(\cM, \triangleright, e)$ is a nonzero, exact, indecomposable, braided finite left $\cC$-module category.
\end{enumerate}
\end{hypothesis}

\begin{example} \label{ex:Creg-hyp}
Given a braided finite tensor category $(\cC, c)$, the regular left $\cC$-module category $\cC_{\text{reg}}$ satisfies Hypothesis~\ref{hyp:Sec4}(b). Namely, $\cC_{\text{reg}}$ is indecomposable and exact by various results in \cite{etingof2015tensor} (see  \cite[Examples~4.29, 4.74(a), 4.90(a)]{walton2024}), and  $\cC_{\text{reg}}$ is braided with $e = c^2$ by Example~\ref{ex:M=C,double}.
\end{example}


\subsection{Factorizable module categories} 
\label{ss:fmc}
Consider the preliminary result below.

\begin{proposition} \label{prop:G-fact}  We have a functor:
\[
G_{\cM}: \cM \; \boxtimes \;  \mathsf{Fun}_{\cC|}(\cM, \cM) \longrightarrow \cE_\cC(\cM), \quad M \boxtimes (F,s) \mapsto (F(M),\; e^{F(M)}),
\]
where the component $e^{F(M)}$ at $X \in \cC$ is defined as:
\begin{equation} \label{eq:G-reflection}
\xymatrix@C=3pc{
e^{F(M)}_X: X \triangleright F(M) 
\ar[r]^(.6){s_{X,M}^{-1}}
& F(X \triangleright M)
\ar[r]^{F(e_{X,M})}
& F(X \triangleright M)
\ar[r]^(.48){s_{X,M}}
&   X \triangleright F(M).
}
\end{equation}
For morphisms $f:M \to M' \in \cM$ and $\phi:(F,s) \Rightarrow (F',s') \in \mathsf{Fun}_{\cC|}(\cM,\cM)$, we define 
\[
G_{\cM}(f \boxtimes \phi):= F'(f) \circ \phi_M: F(M) \to F'(M').
\]
\end{proposition}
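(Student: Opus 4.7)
The plan is to verify, in turn: (a) $(F(M), e^{F(M)})$ lies in $\cE_\cC(\cM)$; (b) $G_\cM(f \boxtimes \phi)$ is a morphism in $\cE_\cC(\cM)$; (c) $G_\cM$ preserves identities and composition; and (d) the assignment descends through the universal functor $\boxtimes$ to the Deligne product. I expect part (a), specifically the reflection axiom, to be the main technical obstacle.

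For (a), naturality of $\{e^{F(M)}_X\}_{X \in \cC}$ in $X$ is immediate since $s_{X,M}^{\pm 1}$ and $F(e_{X,M})$ are each natural in $X$. The content is the reflection identity \eqref{eq:reflection}, which I plan to verify by conjugating each of the four factors on its right-hand side into the image of $F$. Combining \eqref{eq:mod-s-axioms} with naturality of $s$ in the module slot (applied to $e_{Y,M}$) yields
\[
\id_X \triangleright e^{F(M)}_Y \; = \; s_{X \otimes Y, M} \;\circ\; F(\id_X \triangleright e_{Y,M}) \;\circ\; s_{X \otimes Y, M}^{-1},
\]
with the analogous identity for $\id_Y \triangleright e^{F(M)}_X$ using $s_{Y \otimes X, M}$ in place of $s_{X \otimes Y, M}$. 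Naturality of $s$ in its first variable applied to the braiding $c_{Y,X}: Y \otimes X \to X \otimes Y$ gives
\[
c_{Y,X} \triangleright \id_{F(M)} \; = \; s_{X \otimes Y, M} \;\circ\; F(c_{Y,X} \triangleright \id_M) \;\circ\; s_{Y \otimes X, M}^{-1},
\]
together with the companion formula for $c_{Y,X}^{-1} \triangleright \id_{F(M)}$ obtained by swapping the roles of $s_{X \otimes Y, M}$ and $s_{Y \otimes X, M}$. Composing the four factors of the right-hand side of \eqref{eq:reflection} in order, the intermediate $s^{\pm 1}$'s telescope, leaving $s_{X \otimes Y, M} \circ F(\Xi) \circ s_{X \otimes Y, M}^{-1}$ where $\Xi$ is precisely the right-hand side of \eqref{eq:brmod1}. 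Invoking \eqref{eq:brmod1} identifies $\Xi$ with $e_{X \otimes Y, M}$, so the composite equals $e^{F(M)}_{X \otimes Y}$, closing the argument.

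For (b), I unwind $(\id_X \triangleright g) \circ e^{F(M)}_X$ with $g := F'(f) \circ \phi_M$ and apply three naturality inputs in sequence: that $\phi$ is a module natural transformation, i.e., $(\id_X \triangleright \phi_N) \circ s_{X,N} = s'_{X,N} \circ \phi_N$ for all $N \in \cM$; ordinary naturality of $\phi$ in $\cM$ against the endomorphism $e_{X,M}$; and naturality of $s'_{X,-}$ and $e_{X,-}$ against $f: M \to M'$. Combining these yields $e^{F'(M')}_X \circ (\id_X \triangleright g)$, confirming \eqref{eq:ECMmorp}. For (c), identity and composition preservation are immediate from the defining formulas. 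For (d), I will invoke the universal property of $\boxtimes$, which reduces the descent to showing that the bifunctor $(M, (F,s)) \mapsto (F(M), e^{F(M)})$ is $\kk$-bilinear and right exact in each variable. Bilinearity is clear; right exactness in the first slot holds because every $(F, s) \in \mathsf{Fun}_{\cC|}(\cM, \cM)$ is right exact by Hypothesis~\ref{hyp:cat}(b), and right exactness in the second slot holds because colimits in both $\mathsf{Fun}_{\cC|}(\cM, \cM)$ and $\cE_\cC(\cM)$ are computed pointwise in $\cM$.
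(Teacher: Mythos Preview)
Your proof is correct and follows the same approach as the paper: for (a) you use the module-constraint associativity \eqref{eq:mod-s-axioms} and naturality of $s$ to reduce the reflection axiom to \eqref{eq:brmod1}, and for (b) you use naturality of $\phi$, $e$, and the module constraints---precisely the ingredients the paper cites in its one-sentence sketch. Your treatment is simply more explicit, and you additionally address the descent to the Deligne product (your part (d)), which the paper takes for granted; your justification there is adequate, though the claim that colimits in $\cE_\cC(\cM)$ are computed in $\cM$ is cleanest seen via the equivalence $\cE_\cC(\cM)\simeq \mathtt{C}_\cC\text{-}\mathsf{Mod}(\cM)$ (Proposition~\ref{prop:HC}) together with right exactness of $\mathtt{C}_\cC\triangleright -$.
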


\begin{proof}
The functor $G_{\cM}$ is defined on objects because $e^{F(M)}$ satisfies the reflection axiom \eqref{eq:reflection}, due to the braid axiom \eqref{eq:brmod1} and the naturality of the module constraint $s$. Moreover, the functor $G_{\cM}$ is defined on morphisms because $F'(f) \circ \phi_M$ satisfies \eqref{eq:ECMmorp}, due to the naturality of $\phi$, the naturality of $e$, and the naturality of module constraints.
\end{proof}

\begin{definition} \label{def:Mfact}

We call $\cM$ \textit{factorizable} if the functor $G_{\cM}$ above is an equivalence of categories.
\end{definition}

Note that the definition of $\cM$ being factorizable makes sense even when $\cC$ is not finite, or when $\cM$ is decomposable or is not exact. Next, we examine the braided structure of  the functor $G_\cM$.

\begin{proposition}\label{lem:G-fact-2} We have the following statements.
\begin{enumerate}[\upshape (a)]
\item The category $\cM  \boxtimes   \mathsf{Fun}_{\cC|}(\cM, \cM)$ is a braided left $\cC$-module category via:
\[
\begin{array}{rl}
X \act \bigl(M \boxtimes (F,s)\bigr) &:= (X \triangleright M) \boxtimes (F,s)\\[.4pc]
e_{X,M \boxtimes (F,s)} &:= e_{X,M} \; \boxtimes \; \id_{(F,s)}.
\end{array}
\]
for $X,Y \in \cC$, $M \in \cM$, and $(F,s) \in \mathsf{Fun}_{\cC|}(\cM, \cM)$.
\medskip
\item The functor $G:=G_{\cM}$ is a braided left $\cC$-module functor. Here, $s^G_{Y,M \boxtimes (F,s)}$ is defined by
\[
\xymatrix@R=.5pc@C=6pc{
G\bigl(Y \act (M \boxtimes (F,s))\bigr) 
\ar@{=}[d]
\ar@{-->}[r]
& 
Y \act  G\bigl(M \boxtimes (F,s))\bigr)
 \ar@{=}[d]\\
 \bigl(F(Y \act M), \; e^{F(Y \act M)}\bigr) 
 \ar[r]^{s_{Y,M}}
 &
\bigl(Y \act F(M), \; e^{Y \act F(M)}\bigr)
 }
\]
for $Y \in \cC$, $M \in \cM$, and $(F,s) \in \mathsf{Fun}_{\cC|}(\cM, \cM)$.
\end{enumerate}
\end{proposition}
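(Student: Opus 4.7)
The plan is to verify each structural axiom directly, exploiting the fact that the $\cC$-action on the Deligne product only alters the first factor, so nearly everything reduces to the corresponding axiom already satisfied by $(\cM, \triangleright, e)$ or by the module structure $s$ of $(F,s)$.

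For part (a), I would first note that the assignment $X \act \bigl(M \boxtimes (F,s)\bigr) := (X \triangleright M) \boxtimes (F,s)$ extends bilinearly to morphisms $f \boxtimes \phi$ by sending them to $(\id_X \triangleright f) \boxtimes \phi$, and is bifunctorial in $X \in \cC$ and in $M \boxtimes (F,s) \in \cM \boxtimes \mathsf{Fun}_{\cC|}(\cM,\cM)$ because $\triangleright$ is. Associativity and unitality of the action, as well as naturality of $e_{X, M \boxtimes (F,s)} = e_{X,M} \boxtimes \id_{(F,s)}$ in $X$ and in the second argument, are inherited from the corresponding statements for $(\cM, \triangleright)$ and for $e$, again using bilinearity of $\boxtimes$. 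The two hexagon-like axioms \eqref{eq:brmod1} and \eqref{eq:brmod2} then follow by applying $- \boxtimes \id_{(F,s)}$ to the corresponding axioms already known for $(\cM, \triangleright, e)$; no new diagrammatic content is required.

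For part (b), the functor $G := G_\cM$ was built in Proposition~\ref{prop:G-fact}, so it remains to check that $s^G_{Y, M \boxtimes (F,s)} := s_{Y,M}$ endows $G$ with the structure of a left $\cC$-module functor, and that this structure is compatible with the braidings. Naturality of $s^G$ in $Y$ and in $M$ is immediate from naturality of $s$, and naturality in $(F,s)$ uses that a module natural transformation $\phi: (F,s) \Rightarrow (F',s')$ satisfies $s'_{Y,M} \circ \phi_{Y \triangleright M} = (\id_Y \triangleright \phi_M) \circ s_{Y,M}$. The pentagon axiom \eqref{eq:mod-s-axioms} for $s^G$ reads $s_{X \otimes Y, M} = (\id_X \triangleright s_{Y,M}) \circ s_{X, Y \triangleright M}$, which is exactly \eqref{eq:mod-s-axioms} for $s$; similarly the unit axiom \eqref{eq:mod-s-axioms-2} for $s^G$ becomes $s_{\one, M} = F(\id_M)$, which is \eqref{eq:mod-s-axioms-2} for $s$.

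The final and most content-bearing step is the braided module functor axiom \eqref{eq:braidmodfunc}, which in this case reads
\[
e^{F(M)}_{X} \circ s_{X,M} \; = \; s_{X,M} \circ G\bigl(e_{X,M} \boxtimes \id_{(F,s)}\bigr).
\]
By the definition of $G$ on morphisms, the right-hand side equals $s_{X,M} \circ F(e_{X,M})$. By the defining formula \eqref{eq:G-reflection} of $e^{F(M)}_X$, the left-hand side equals $s_{X,M} \circ F(e_{X,M}) \circ s_{X,M}^{-1} \circ s_{X,M} = s_{X,M} \circ F(e_{X,M})$, so the two sides agree. I do not expect a genuine obstacle here: the only mild subtlety is keeping track that $G$ applied to a morphism of the form $f \boxtimes \id$ collapses to $F(f)$ (via the formula $G(f \boxtimes \phi) = F'(f) \circ \phi_M$ with $F' = F$ and $\phi = \id$), after which the identity is formal. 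Finally, because $s$ is an isomorphism (as $(F,s)$ is a module functor), $s^G$ is an isomorphism, so $G$ is indeed a braided $\cC$-module functor in the sense of \eqref{eq:braidmodfunc}.
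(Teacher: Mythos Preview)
Your proposal handles part (a) and most of part (b) correctly, but there is a genuine gap: you never verify that $s_{Y,M}$ is a morphism in $\cE_\cC(\cM)$. Since $G$ lands in the reflective center, the module constraint $s^G_{Y, M \boxtimes (F,s)} := s_{Y,M}$ must intertwine the reflections on its source and target, i.e., satisfy \eqref{eq:ECMmorp}:
\[
(\id_X \act s_{Y,M}) \circ e^{F(Y \act M)}_X \;=\; e^{Y \act F(M)}_X \circ (\id_X \act s_{Y,M}) \qquad \text{for all } X \in \cC.
\]
This is not automatic. The left-hand reflection $e^{F(Y \act M)}_X$ is produced by \eqref{eq:G-reflection} applied to the object $Y \act M$, namely $s_{X,\,Y\act M}\, F(e_{X,\,Y\act M})\, s_{X,\,Y\act M}^{-1}$, whereas the right-hand reflection $e^{Y \act F(M)}_X$ is produced by the $\cC$-action on $\cE_\cC(\cM)$ in \eqref{eq:ECMact}, namely $(c_{Y,X}\act \id)(\id_Y \act e^{F(M)}_X)(c_{X,Y}\act \id)$. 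These are two genuinely different expressions, and matching them through $s_{Y,M}$ is the substantive computation of the proof: it uses \eqref{eq:brmod2} to rewrite $e_{X,\,Y\act M}$, the associativity axiom \eqref{eq:mod-s-axioms}, and naturality of $s$ in its first slot (with respect to $c_{Y,X}$). The paper carries this out explicitly.

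Everything else you wrote is fine and matches the paper: the module-functor axioms \eqref{eq:mod-s-axioms}, \eqref{eq:mod-s-axioms-2} for $s^G$ do reduce to those for $s$, and the braided-functor axiom \eqref{eq:braidmodfunc} is indeed immediate from \eqref{eq:G-reflection} once $s^G$ is known to be well-defined. But that well-definedness is precisely the missing step.
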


\begin{proof}
Part (a) is straight-forward to check. Part (b) holds by the following computations. First,  $s_{Y,M}$ is a morphism in $\cE_\cC(\cM)$, that is \eqref{eq:ECMmorp} holds, since:
{\small
\begin{align*}
&(\id_X \act s_{Y,M}) \; e^{F(Y \act M)}_X \\[.2pc] 
&\quad \overset{\eqref{eq:G-reflection}}{=} (\id_X \act s_{Y,M}) \;   s_{X,Y \act M} \; \;  F(e_{X, Y \act M}) \; \;  s_{X, Y \act M}^{-1}\\[.2pc] 
&\quad \overset{\eqref{eq:brmod2}}{=} (\id_X \act s_{Y,M}) \;   s_{X,Y \act M}  \; F(c_{Y,X} \act \id_M)  \; F(\id_Y \act e_{X,M})   \;F(c_{X,Y} \act \id_M) \;  s_{X, Y \act M}^{-1}\\[.2pc]
&\quad  \overset{\eqref{eq:mod-s-axioms}, \; s\hspace{0.02in}\text{nat'l}}{=}  (c_{Y,X} \act \id_{F(M)}) \; s_{Y \otimes X, M} \; 
\;F(\id_Y \act e_{X,M}) \; s_{Y \otimes X, M}^{-1}  \;(c_{X,Y} \act \id_{F(M)}) \;  (\id_X \act s_{Y,M})\\[.2pc]
&\quad \overset{\eqref{eq:mod-s-axioms}}{=}  (c_{Y,X} \act \id_{F(M)}) \; (\id_Y \act s_{X,M}) \; s_{Y, X \act M} \;F(\id_Y \act e_{X,M}) \; s_{Y,X \act M}^{-1} \;  (\id_Y \act s_{X,M}^{-1})\\ 
&\quad \; \quad \quad \circ \;   \;(c_{X,Y} \act \id_{F(M)}) \;  (\id_X \act s_{Y,M})\\[.2pc]
&\quad \overset{s\hspace{0.02in}\text{nat'l}}{=}  (c_{Y,X} \act \id_{F(M)}) \; \bigl(\id_Y \act s_{X,M} \;  F(e_{X,M}\bigr) \;  s_{X,M}^{-1}) \;(c_{X,Y} \act \id_{F(M)})  \; (\id_X \act s_{Y,M})\\[.2pc]
&\quad \overset{\eqref{eq:G-reflection}}{=}   (c_{Y,X} \act \id_{F(M)}) \;  (\id_Y \act e_X^{F(M)}) \;(c_{X,Y} \act \id_{F(M)}) \;  (\id_X \act s_{Y,M})\\[.2pc]
&\quad \overset{\eqref{eq:ECMact}}{=} e^{Y \act F(M)}_X  \; (\id_X \act s_{Y,M}).
\end{align*}
}

Next, $s^G$ satisfies \eqref{eq:mod-s-axioms}, \eqref{eq:mod-s-axioms-2} as $s$ does. Thus, $(G,s^G)$ is a left $\cC$-module functor. Now, $(G,s^G)$ is a braided module functor, that is, \eqref{eq:braidmodfunc} holds, since:
{\small
\[
\begin{array}{rl}
e_{X}^{G\bigl(M \boxtimes (F,s)\bigr)} \circ  s^G_{X,M \boxtimes (F,s)} 
& = \; e_{X}^{F(M)} \circ  s_{X,M}  \; \; \overset{\eqref{eq:G-reflection}}{=} \;  s_{X,M}\;  \circ  F(e_{X,M})\\[.4pc]
&= \; s^G_{X,M \boxtimes (F,s)} \circ   G\bigl(e_{X,M} \boxtimes \id_{(F,s)}\bigr)
\; \; = \; s^G_{X,M \boxtimes (F,s)} \circ  G\bigl(e_{X,M \boxtimes (F,s)}\bigr)
\end{array}
\]
}
This completes the proof.
\end{proof}

Next, we examine when regular braided module categories are factorizable; see Example~\ref{ex:Creg-hyp}.

\begin{example} \label{ex:regular-fact}
Take $\cM$ to be the regular left $\cC$-module category $\cC_{\textnormal{reg}}$, which is braided via $e:= c^2$ [Example~\ref{ex:Creg-hyp}].  We will show that the functor below is the functor $G_\cC$ from $\S$\ref{subsec:nondeg-monoidal}. By its definition, the functor below will be an equivalence of categories precisely when $G_{\cC_{\textnormal{reg}}}$ from Proposition~\ref{prop:G-fact} is an equivalence of categories.
{\small
\[
\xymatrix@C=2pc@R=0pc{
\cC \boxtimes \cC^{\otimes \textnormal{op}} 
\ar[r]^(.33){\eqref{eq:FunCop}}_(.33){\sim}
&
\cC_{\textnormal{reg}} \; \boxtimes \; \mathsf{Fun}_{\cC|}(\cC_{\textnormal{reg}}, \cC_{\textnormal{reg}}) 
\ar[r]^(.62){G_{\cC_{\textnormal{reg}}}}
&
\cE_{\cC}(\cC_{\textnormal{reg}})
\ar[r]^(.5){[\textnormal{Rem}.\; 
 \textnormal{\ref{rem:Ec(C)Z(C)}}]}_(.5){\sim}
&
\cZ(\cC)\\
V \boxtimes W
\ar@{|->}[r]
&
V \boxtimes (- \otimes W)
\ar@{|->}[r]
&
(V \otimes W, \; e^{V \otimes W})
\ar@{|->}[r]
&
(V \otimes W, \; c_{V \otimes W, X}^{-1} \circ e_X^{V \otimes W}).
}
\] 
}

\noindent Now the functor above is  equal to the functor $G_\cC$ from $\S$\ref{subsec:nondeg-monoidal} due to the computations below:
{\small
\[
\begin{array}{rl}
c^{V \otimes W}_X:= c_{V \otimes W, X}^{-1} \circ e_X^{V \otimes W}  &= \; 
c_{V \otimes W, X}^{-1} \; (c_{V,X} \otimes \id_W) \; (c_{X,V} \otimes \id_W)\\[.4pc]
 &
 \overset{\eqref{eq:braid1}}{=} \; 
(\id_V \otimes c_{W,X}^{-1}) \; (c_{V,X}^{-1} \otimes \id_W) \; (c_{V,X} \otimes \id_W) \; (c_{X,V} \otimes \id_W)\\[.4pc]
 &= \; 
(\id_V \otimes c_{W,X}^{-1}) \; (c_{X,V} \otimes \id_W).
\end{array}
\]
}

\noindent Thus, $(\cC_{\textnormal{reg}},\;  e:=c^2)$ is factorizable as a braided left $\cC$-module category if and only if $(\cC, c)$ is factorizable as a braided finite tensor category.
\end{example}


\subsection{Nondegenerate module categories}
\label{ss:nmc}

Recall the notation from $\S\S$\ref{ss:coendBTC}, \ref{ss:Endmod}.

\begin{definition} \label{def:omegaM}  Using the universal properties of $\mathtt{E}_{\cC}$ and $\mathtt{E}_{\cM}$, define the following morphism in $\cC$,
\[
\omega_{\cM}: \one \to \mathtt{E}_{\cC} \otimes \mathtt{E}_{\cM},
\]
to be the unique morphism that completes the commutative diagram below, for each $X \in \cC$, $M \in \cM$.
{\small
\[
\hspace{-.02in}
\xymatrix@C=5.3pc@R=1pc{ 
\one 
\ar@{-->}[r]^(.45){\omega_{\cM}}
\ar[d]_(.4){\coev^L_X\; \otimes \; \textnormal{coev}_{\one, M}}
& 
\mathtt{E}_{\cC} \otimes \mathtt{E}_{\cM}
\ar[r]^(.38){\pi_{\cC}(X)\;  \otimes \; \pi_{\cM}(M)}
& 
X \otimes X^* \otimes \; \underline{\textnormal{Hom}}(M,M)\\
X \otimes X^* \otimes \; \underline{\textnormal{Hom}}(M,M) 
\ar[r]^{\sim}_{\eqref{eq:internalHom-2}}
&
X  \otimes \; \underline{\textnormal{Hom}}(M,X^* \triangleright M)
\ar[r]^{\sim}_{\id \; \otimes \; \underline{\textnormal{Hom}}(\id, e_{X^{\scalebox{.6}{$*$}},M})}
& 
X  \otimes \; \underline{\textnormal{Hom}}(M,X^* \triangleright M)
\ar[u]^{\sim}_{\eqref{eq:internalHom-2}}
}
\]
}
We refer to $\omega_{\cM}$ as the {\it universal copairing of $\cC$ and $\cM$}.
\end{definition}

With the morphism $\omega_{\cM}$ above, we define nondegenerate module categories as follows.

\begin{definition} \label{def:Mnondeg} 
We say that $\cM$ is {\it nondegenerate}  if

\vspace{-.2in}
\[
\xymatrix@C=5pc{
\theta_{\cM}: \mathtt{E}_{\cC}^*
\ar[r]^(.43){\id \; \otimes \; \omega_{\cM}}
&
\mathtt{E}_{\cC}^* \otimes \mathtt{E}_{\cC} \otimes \mathtt{E}_{\cM}
\ar[r]^(.6){\ev^L_{\mathtt{E}_{\cC}} \; \otimes \; \id}
& 
\mathtt{E}_{\cM}
} 
\]
is an isomorphism in $\cC$.
\end{definition}

Next, we study when regular braided module categories [Example~\ref{ex:Creg-hyp}] are  nondegenerate. 

\begin{example} \label{ex:Mreg-nondeg}
Take $\cM$ to be the regular left $\cC$-module category $\cC_{\textnormal{reg}}$, which is braided via $e:= c^2$. In this case, $\underline{\textnormal{Hom}}(Y,Y) \cong Y \otimes Y^*$, and $\textnormal{coev}_{\one,Y} = \coev_Y$, for $Y \in \cC_{\textnormal{reg}}$. Now by incorporating \eqref{eq:internalHom-2}, the nondegeneracy of $\cC_{\textnormal{reg}}$ depends on a morphism, $\omega_{\cC_{\textnormal{reg}}}: \one \to {\tt E}_\cC \otimes {\tt E}_{\cC_{\textnormal{reg}}}$, defined by:

\vspace{-.1in}

\begin{equation}\label{eq:omegaCreg}
(\pi_{\cC}(X) \otimes \pi_{\cC_{\textnormal{reg}}}(Y)) \circ \omega_{\cC_{\textnormal{reg}}} 
= \bigl(\id_X \otimes (c_{Y,X^*} \circ c_{X^*,Y}) \otimes \id_{Y^*}\bigr) \circ (\coev_X \otimes \coev_Y).
\end{equation}

\vspace{.1in}

We claim that $(\omega_{\cC_{\textnormal{reg}}})^*$ is the morphism $\omega_\cC$ from $\S\S$\ref{ss:coendBTC}, \ref{subsec:nondeg-monoidal}. 
Note that $(\coev^L_Z)^* = \ev^L_{Z^*}$, and $(\ev^L_Z)^* = \coev^L_{Z^*}$, and $(c_{Z,W})^* = c_{W^*,Z^*}$,
for  $W,Z \in \cC$.  Now, with \eqref{eq:EcCc2}, dualizing \eqref{eq:omegaCreg} yields:

\vspace{-.1in}

\[
(\omega_{\cC_{\textnormal{reg}}})^* \circ (\iota_{\cC}(Y^*) \otimes \iota_{\cC}(X^*)) 
= (\ev^L_{Y^*} \otimes \ev^L_{X^*}) \circ \bigl(\id_{Y^{**}} \otimes (c_{X^{**},Y^*} \circ c_{Y^*,X^{**}}) \otimes \id_{X^*}\bigr).
\] 
By the uniqueness of $\omega_\cC$,  defined in Lemma~\ref{lem:EcHopfAlg}(b), we get that $(\omega_{\cC_{\textnormal{reg}}})^* = \omega_\cC$ as claimed.

Next, $\cC_{\textnormal{reg}}$ is nondegenerate as a braided left $\cC$-module category if and only if $\theta_{\cC_{\textnormal{reg}}}$ from Definition~\ref{def:Mnondeg} is an isomorphism. Recall the morphism $\theta_{\cC}$ from $\S$\ref{subsec:nondeg-monoidal} attached to the nondegeneracy of $\cC$. Now with \eqref{eq:EcCc}, we obtain that:
\[
(\theta_{\cC_{\textnormal{reg}}})^* \; :=\;  
\bigl((\omega_{\cC_\textnormal{reg}})^* \otimes \id_{{\tt E}_\cC^{**}}\bigr)
\bigl(\id_{{\tt E}_{\cC_\textnormal{reg}}^*} \otimes (\ev^L_{{\tt E}_\cC})^*\bigr) 
\; = \; 
\bigl(\omega_{\cC} \otimes \id_{{\tt C}_\cC^{*}}\bigr)
\bigl(\id_{{\tt C}_{\cC}} \otimes \coev^L_{{\tt C}_\cC}\bigr) \; =: \; \theta_{\cC}.
\]
This implies that  $(\cC_{\textnormal{reg}},\;  e:=c^2)$ is  nondegenerate as a braided left $\cC$-module category precisely when $(\cC, c)$ is nondegenerate as a braided finite tensor category.
\end{example}


\subsection{Equivalence of nondegeneracy and factorizability} 
\label{ss:mainthm}
The main result of the article is the following theorem.  

\begin{theorem} \label{thm:FactNondeg} Recall Hypothesis~\ref{hyp:Sec4}. We obtain that the left $\cC$-module category $\cM$ is factorizable  if and only if $\cM$ is nondegenerate as a left $\cC$-module category.  
\end{theorem}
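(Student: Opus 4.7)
The plan is to realize both properties as two equivalent presentations of the same underlying data, deducing the theorem from the square advertised in the introduction. First, I would verify that $\theta_\cM \colon \mathtt{E}_\cC^* \to \mathtt{E}_\cM$ is a morphism of algebras in $\cC$, where $\mathtt{E}_\cC^*$ carries the algebra structure dual to the coalgebra structure on $\mathtt{C}_\cC \cong \mathtt{E}_\cC^*$ from Lemma~\ref{lem:EcHopfAlg} (via \eqref{eq:EcCc}) and $\mathtt{E}_\cM$ carries the composition algebra structure of Lemma~\ref{lem:EmAlg}. This is a diagram chase against the universal properties of the (co)ends: one composes $\theta_\cM$ with $\iota_\cC(X)$ on the source and $\pi_\cM(M)$ on the target, then rewrites both composites using the hexagon axioms for $c$, the braid axioms \eqref{eq:brmod1}--\eqref{eq:brmod2} for $e$, the identity \eqref{eq:compMNPiden}, and the explicit form of $\omega_\cM$ from Definition~\ref{def:omegaM}. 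Once $\theta_\cM$ is an algebra map, Lemma~\ref{lem:AlgResEquiv} tells us that $\theta_\cM$ is an isomorphism in $\cC$ if and only if the restriction functor $\text{Res}_{\theta_\cM} \colon \mathtt{E}_\cM\text{-}\mathsf{Mod}(\cM) \to \mathtt{E}_\cC^*\text{-}\mathsf{Mod}(\cM)$ is an equivalence.

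Next, I would establish commutativity of the square
\[
\xymatrix@C=5pc@R=1.2pc{
\cM \boxtimes \mathsf{Fun}_{\cC|}(\cM,\cM) \ar[r]^(.6){G_\cM} \ar[d]_(.45){H_\cM}^{\sim} & \cE_\cC(\cM) \\
\mathtt{E}_\cM\text{-}\mathsf{Mod}(\cM) \ar[r]^(.5){\text{Res}_{\theta_\cM}} & \mathtt{E}_\cC^*\text{-}\mathsf{Mod}(\cM) \ar[u]_(.5){\widehat{H}_\cC}^(.5){\sim}
}
\]
whose vertical arrows are equivalences by Propositions~\ref{prop:HM} and \ref{prop:HC}. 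Tracing a generator $M \boxtimes (F,s)$ along both paths, each produces the underlying object $F(M)$ in $\cM$; the real content is to match the two reflections $X \triangleright F(M) \to X \triangleright F(M)$ at every $X \in \cC$. On the $G_\cM$ side, \eqref{eq:G-reflection} gives $e^{F(M)}_X = s_{X,M} \circ F(e_{X,M}) \circ s_{X,M}^{-1}$. On the $\widehat{H}_\cC \circ \text{Res}_{\theta_\cM} \circ H_\cM$ side, the reflection is assembled from the formula for $\xi^{\mathtt{C}_\cC}_{F(M)}$ in Proposition~\ref{prop:HC}(b), from $\xi^{\mathtt{E}_\cM}_{F(M)}$ in Proposition~\ref{prop:HM}, and from $\theta_\cM$; after collapsing via the universal property of $\mathtt{C}_\cC$, the needed equality reduces to the defining identity for $\omega_\cM$ in Definition~\ref{def:omegaM}. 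The insertion of the braiding $e_{X^*,M}$ inside $\omega_\cM$ is precisely what, after passing through $s^{-1}$ and $s$, reproduces the conjugated $F(e_{X,M})$ on $F(M)$.

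The theorem then follows at once: $\cM$ is factorizable iff $G_\cM$ is an equivalence iff (by the commutative square and the two vertical equivalences) $\text{Res}_{\theta_\cM}$ is an equivalence iff (by Lemma~\ref{lem:AlgResEquiv}) $\theta_\cM$ is an isomorphism iff $\cM$ is nondegenerate. I expect the principal obstacle to be the commutativity of the square in step two, since it requires carefully coordinating three layers of data at once: the universal property of $\mathtt{C}_\cC \cong \mathtt{E}_\cC^*$, the explicit $\mathtt{E}_\cM$-action $\xi^{\mathtt{E}_\cM}_{F(M)}$ (which involves $s^{-1}$ and $\ev_{M,M}$), and the reflection $e^{F(M)}_X$. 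Verifying that $\theta_\cM$ is an algebra morphism is a parallel but technically lighter calculation, sharing the same manipulations of $\omega_\cM$ and the braid/hexagon axioms, and can be carried out in tandem.
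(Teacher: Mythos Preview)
Your proposal is correct and follows essentially the same route as the paper: establish commutativity of the displayed square, then invoke Lemma~\ref{lem:AlgResEquiv}. The paper does not separately verify that $\theta_\cM$ is an algebra map (this is absorbed into the commutativity check) and carries out the reflection-matching calculation via an auxiliary identity (Lemma~\ref{lem:prelim}), but the substance---reducing the comparison of reflections on $F(M)$ to the defining property of $\omega_\cM$---is exactly what you outline.
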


We need a technical lemma before we establish the result above; the proof is left to the reader.

\begin{lemma} \label{lem:prelim}
Take a functor $F: \cM \to \cM$,  objects $Z_1, Z_2 \in \cC$ and $M \in \cM$, and morphisms $h: Z_1 \to Z_1 \otimes Z_2$ in $\cC$ and $\ell: Z_2 \act M \to M$ in $\cM$. Then, we have the equality of morphisms: 
\[
\bigl(\id_{Z_1} \otimes  F(\ell)\bigr) \; \bigl(\id_{Z_1} \otimes  s^{-1}_{Z_2,M}\bigr) \;  \bigl(h \act \id_{F(M)}\bigr)
\; = \; s_{Z_1,M} \;  F\bigl((\id_{Z_1} \otimes  \ell)(h \act \id_M)\bigr) \;  s^{-1}_{Z_1,M},
\]
which go from $Z_1 \otimes F(M)$ to $Z_1 \otimes F(M)$. \qed
\end{lemma}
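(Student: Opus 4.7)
The plan is a direct computation using only the naturality of the module constraint $s$ in both arguments, together with the associativity axiom \eqref{eq:mod-s-axioms}. Here $F$ is implicitly equipped with the $\cC$-module structure $s_{X,N}: F(X \act N) \overset{\sim}{\to} X \act F(N)$, which is natural jointly in $X \in \cC$ and $N \in \cM$.

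First, I would split the right-hand side using functoriality of $F$ into $s_{Z_1,M} \circ F(\id_{Z_1} \otimes \ell) \circ F(h \act \id_M) \circ s^{-1}_{Z_1,M}$. Next, apply naturality of $s$ to the morphism $\ell: Z_2 \act M \to M$ in $\cM$ with the object $Z_1 \in \cC$ held fixed, to get
\[
s_{Z_1,M} \circ F(\id_{Z_1} \act \ell) = (\id_{Z_1} \act F(\ell)) \circ s_{Z_1, Z_2 \act M}.
\]
Similarly, apply naturality of $s$ to the morphism $h: Z_1 \to Z_1 \otimes Z_2$ in $\cC$ with the object $M \in \cM$ held fixed, to get
\[
s_{Z_1 \otimes Z_2, M} \circ F(h \act \id_M) = (h \act \id_{F(M)}) \circ s_{Z_1, M}.
\]
Substituting the first identity into the rewritten right-hand side yields
\[
(\id_{Z_1} \act F(\ell)) \circ s_{Z_1, Z_2 \act M} \circ F(h \act \id_M) \circ s^{-1}_{Z_1, M}.
\]

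Next, I would rewrite $s_{Z_1, Z_2 \act M}$ using the associativity coherence \eqref{eq:mod-s-axioms}: since $s_{Z_1 \otimes Z_2, M} = (\id_{Z_1} \act s_{Z_2, M}) \circ s_{Z_1, Z_2 \act M}$, we have $s_{Z_1, Z_2 \act M} = (\id_{Z_1} \act s^{-1}_{Z_2, M}) \circ s_{Z_1 \otimes Z_2, M}$. Plugging this into the expression above converts it into
\[
(\id_{Z_1} \act F(\ell)) \circ (\id_{Z_1} \act s^{-1}_{Z_2, M}) \circ s_{Z_1 \otimes Z_2, M} \circ F(h \act \id_M) \circ s^{-1}_{Z_1, M}.
\]
Finally, applying the second naturality identity to the middle segment collapses $s_{Z_1 \otimes Z_2, M} \circ F(h \act \id_M) \circ s^{-1}_{Z_1, M}$ to $h \act \id_{F(M)}$, producing exactly the left-hand side. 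There is no genuine obstacle here; the only point that requires care is matching the identification $(Z_1 \otimes Z_2) \act N = Z_1 \act (Z_2 \act N)$ (permitted by strictness of the module category) so that the two naturality squares compose cleanly via the coherence axiom.
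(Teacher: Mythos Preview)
Your proof is correct and uses essentially the same approach as the paper's: both arguments rely only on naturality of $s$ in each variable and the associativity axiom \eqref{eq:mod-s-axioms}. The only cosmetic difference is that you start from the right-hand side and work toward the left, whereas the paper starts from the left-hand side (inserting $s_{Z_1,M}\,s^{-1}_{Z_1,M}$) and works toward the right; the intermediate steps are mirror images of one another.
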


\medskip

This brings us to the proof of the main result of this article.

\begin{proof}[Proof of Theorem~\ref{thm:FactNondeg}]
Recall the category equivalences, $H_{\cM}$ and $H_{\cC}$, from Propositions~\ref{prop:HM} and~\ref{prop:HC} respectively, the quasi-inverse $\widehat{H}_\cC$ of $H_{\cC}$ given in Proposition~\ref{prop:HC}(b), and also the functor $\textnormal{Res}_{\theta_{\cM}}$ from Lemma~\ref{lem:AlgResEquiv}. We will show that the following functorial diagram commutes.
\[
\xymatrix@C=7pc@R=1.2pc{
\cM \; \boxtimes \;  \mathsf{Fun}_{\cC|}(\cM, \cM)
\ar[r]^(.6){G_{\cM}}
\ar[d]_(.45){H_{\cM}}^{\sim}
&
\cE_{\cC}(\cM)\\
{\tt E}_{\cM}\text{-}\mathsf{Mod}(\cM)
\ar[r]^(.4){\textnormal{Res}_{\hbox{\tiny$\theta_{\cM}$}}}
&
{\tt E}_{\cC}^*\text{-}\mathsf{Mod}(\cM) \overset{\eqref{eq:EcCc}}{\cong} {\tt C}_{\cC}\text{-}\mathsf{Mod}(\cM) \ar[u]_(.6){\widehat{H}_{\cC}}^(.5){\sim}
}
\]

\smallskip

Consider the following computation:
\smallskip
{\small
\[
\begin{array}{l}
 \bigl(\widehat{H}_{\cC} \circ \textnormal{Res}_{\hbox{\tiny$\theta_{\cM}$}} \circ H_{\cM}\bigr)\bigl(M \boxtimes (F,s)\bigr) 
\; = \; \bigl(\widehat{H}_{\cC} \circ \textnormal{Res}_{\hbox{\tiny$\theta_{\cM}$}}\bigr)\bigl(F(M), \; \xi_{F(M)}^{\mathtt{E}_{\cM}}\bigr) 
\; = \; \widehat{H}_{\cC}\bigl(F(M), \; \xi_{F(M)}^{\mathtt{E}_{\cM}}(\theta_\cM \act \id_{F(M)}) \bigr)\\[.6pc]
= \Bigl(F(M), \; e^{F(M)}:=\Bigl\{\bigl(\id_X \otimes \xi_{F(M)}^{\mathtt{E}_{\cM}}(\theta_\cM \act \id_{F(M)})\bigr)\bigl(\id_X \otimes \iota_{\cC}(X) \act \id_{F(M)} \bigr)\bigl(\coev_X^L \otimes \id_X \act \id_{F(M)} \bigr)\Bigr\}_{X \in \cC} \Bigr). \\
\end{array}
\]
}

\noindent To show that $\widehat{H}_{\cC} \circ \textnormal{Res}_{\hbox{\tiny$\theta_{\cM}$}} \circ H_{\cM}$ is equal to $G_{\cM}$, it suffices to show that the morphism $e^{F(M)}_X$ above is equal to the morphism $e^{F(M)}_X$ in \eqref{eq:G-reflection}.  This is done below; here ``level ex." denotes level exchange:
\smallskip
{\small
\begin{align*}
&\bigl(\id_X \otimes \xi_{F(M)}^{\mathtt{E}_{\cM}}(\theta_\cM \act \id_{F(M)})\bigr)\bigl(\id_X \otimes \iota_{\cC}(X) \act \id_{F(M)} \bigr)\bigl(\coev_X^L \otimes \id_X \act \id_{F(M)} \bigr)\\[.4pc]
&\overset{\text{Def.}~\textnormal{\ref{def:Mnondeg}}}{=} \bigl(\id_X \otimes \xi_{F(M)}^{\mathtt{E}_{\cM}}((\ev_{\mathtt{E}_{\cC}}^L \otimes \id_{\mathtt{E}_{\cM}})(\id_{\mathtt{E}_{\cC}^*} \otimes  \omega_{\cM}) \act \id_{F(M)})\bigr)\bigl(\id_X \otimes \iota_{\cC}(X) \act \id_{F(M)} \bigr)\bigl(\coev_X^L \otimes \id_X \act \id_{F(M)} \bigr)\\[.4pc]
&\overset{\text{Prop.}~\textnormal{\ref{prop:HM}}, ~\text{s nat'l}}{=} 
\bigl(\id_X \otimes F(\ev_{M,M})\; s^{-1}_{\underline{\text{Hom}}(M,M),M} \; (\pi_{\cM}(M) \act \id_{F(M)})\bigr)
\bigl(\id_X \otimes ((\ev_{\mathtt{E}_{\cC}}^L \otimes \id_{\mathtt{E}_{\cM}}) \act \id_{F(M)})\bigr)\\[.2pc]
&\hspace{.6in} \circ \; 
\bigl(\id_X \otimes ((\id_{\mathtt{E}_{\cC}^*} \otimes  \omega_{\cM}) \act \id_{F(M)})\bigr)
\bigl(\id_X \otimes \iota_{\cC}(X) \act \id_{F(M)} \bigr)\bigl(\coev_X^L \otimes \id_X \act \id_{F(M)} \bigr)\\[.4pc]
&\overset{\text{level ex.}}{=} 
\bigl(\id_X \otimes F(\ev_{M,M})\; s^{-1}_{\underline{\text{Hom}}(M,M),M} \; (\pi_{\cM}(M) \act \id_{F(M)})\bigr)
\bigl(\id_X \otimes \ev_{\mathtt{E}_{\cC}}^L \otimes \id_{\mathtt{E}_{\cM}} \act \id_{F(M)})\bigr)
\\[.2pc]
&\hspace{.6in} \circ \; 
\bigl(\id_X \otimes \iota_{\cC}(X) \otimes \id_{\mathtt{E}_{\cC} \otimes \mathtt{E}_{\cM}} \act \id_{F(M)} \bigr)
\bigl(\id_{X \otimes X^* \otimes X} \otimes \omega_{\cM} \act \id_{F(M)})\bigr)
\bigl(\coev_X^L \otimes \id_X \act \id_{F(M)} \bigr)\\[.4pc]
&\overset{\eqref{eq:dualmor}, \eqref{eq:EcCc}, \eqref{eq:EcCc2}'}{=} 
\bigl(\id_X \otimes F(\ev_{M,M})\; s^{-1}_{\underline{\text{Hom}}(M,M),M} \; (\pi_{\cM}(M) \act \id_{F(M)})\bigr)
\\[.2pc]
&\hspace{.6in} \circ \; \bigl(\id_X \otimes \ev_X^L(\id_{X^*} \otimes \ev_X^R \otimes \id_X) \otimes \id_{\mathtt{E}_{\cM}} \act \id_{F(M)})\bigr)
\\[.2pc]
&\hspace{.6in} \circ \; 
\bigl(\id_{X \otimes X^* \otimes X} \otimes {}^* \hspace{-.01in} \iota_{\cC}(X) \otimes \id_{\mathtt{E}_{\cM}} \act \id_{F(M)} \bigr)
\bigl(\id_{X \otimes X^* \otimes X} \otimes \omega_{\cM} \act \id_{F(M)})\bigr)
\bigl(\coev_X^L \otimes \id_X \act \id_{F(M)} \bigr)\\[.4pc]
&=
\bigl(\id_X \otimes F(\ev_{M,M})\bigr)\; \bigl(\id_X \otimes  s^{-1}_{\underline{\text{Hom}}(M,M),M}\bigr) \; \bigl(h \act \id_{F(M)}\bigr), 
\end{align*}
}

\vspace{0.1in}

\noindent  for the morphism $h$ defined as:
{\small
\begin{align*}
&h:= \Bigl(\id_X  \otimes  \pi_{\cM}(M) \;  \bigl(\ev_X^L(\id_{X^*} \otimes \ev_X^R \otimes \id_X) \otimes \id_{\mathtt{E}_{\cM}}\bigr) \; \bigl(\id_{X^* \otimes X} \otimes {}^* \hspace{-.01in} \iota_{\cC}(X) \otimes \id_{\mathtt{E}_{\cM}}\bigr)\;  
\bigl( \id_{X^* \otimes X} \otimes \omega_{\cM} \bigr) \Bigr)\\
& \hspace{.6in} \circ \bigl(\coev_X^L \otimes \id_X \bigr).
\end{align*}
}

\vspace{0.1in}

Now using Lemma~\ref{lem:prelim}, with $Z_1 = X$, and $Z_2 = \underline{\text{Hom}}(M,M)$, and $\ell = \ev_{M,M}$, we get that 
\smallskip
{\small
\begin{align*}
&\bigl(\id_X \otimes \xi_{F(M)}^{\mathtt{E}_{\cM}}(\theta_\cM \act \id_{F(M)})\bigr)\bigl(\id_X \otimes \iota_{\cC}(X) \act \id_{F(M)} \bigr)\bigl(\coev_X^L \otimes \id_X \act \id_{F(M)} \bigr)\\[.2pc]
&\overset{\text{Lem.}~\textnormal{\ref{lem:prelim}}}{=}
s_{X,M} \;  F\bigl((\id_{X} \otimes  \ev_{M,M})(h \act \id_M)\bigr) \;  s^{-1}_{X,M}.
\end{align*}
}
Next, consider the following computation:
{\small
\begin{align*}
&(\id_{X} \otimes  \ev_{M,M})(h \act \id_M)\\[.4pc]
& \overset{\text{level\hspace{0.02in}ex.}, \hspace{0.04in} \text{Def.}\textnormal{\ref{def:Mnondeg}}, \hspace{0.04in} \eqref{eq:EcCc2}'}{=}
\bigl(\id_{X} \otimes \ev_X^L \act \id_M \bigr)
\bigl(\id_{X \otimes X^* \otimes X} \otimes \ev_{M,M}\bigr)
\bigl(\id_{X \otimes X^*}  \otimes \underline{\text{Hom}}(\id_M, e_{X,M}) \act \id_M \bigr)
\\[.2pc]
&\hspace{.9in} \circ \;
\bigl(\id_{X \otimes X^*} \otimes \ev_X^R \otimes \id_{X \otimes \underline{\text{Hom}}(M,M)} \act \id_M \bigr)
 \bigl(\id_{X \otimes X^* \otimes X} \otimes \coev_{{}^* \hspace{-.02in} X}^L \otimes \coev_{\one,M} \act \id_M \bigr)
 \\[.2pc]
&\hspace{.9in} \circ \;
\bigl(\coev_X^L \otimes \id_X \act \id_M\bigr)\\[.4pc]
& \overset{\ev~\text{nat'l},\hspace{0.04in} \eqref{eq:internalHom},\hspace{0.04in}\eqref{eq:internalHom-2}}{=}
\bigl(\id_{X} \otimes \ev_X^L \act \id_M \bigr)
\bigl(\id_{X \otimes X^*} \otimes \ev_{M,X \act M}\bigr)
\bigl(\id_{X \otimes X^*}  \otimes \underline{\text{Hom}}(\id_M, e_{X,M}) \act \id_M \bigr)
\\[.2pc]
&\hspace{.9in} \circ \;
\bigl(\id_{X \otimes X^*} \otimes \ev_X^R \otimes \id_{X \otimes \underline{\text{Hom}}(M,M)} \act \id_M \bigr)
 \bigl(\id_{X \otimes X^* \otimes X} \otimes \coev_{{}^* \hspace{-.02in} X}^L \otimes \coev_{\one,M} \act \id_M \bigr)
 \\[.2pc]
&\hspace{.9in} \circ \;
\bigl(\coev_X^L \otimes \id_X \act \id_M\bigr)\\[.4pc]
& \overset{\text{ev nat'l}, \hspace{0.04in}\eqref{eq:internalHom-2}}{=}
\bigl(\id_{X} \otimes \ev_X^L \act \id_M \bigr)
\bigl(\id_{X \otimes X^*}  \otimes e_{X,M} \bigr)
\bigl(\id_{X \otimes X^* \otimes X} \otimes \ev_{M,M}\bigr)
\\[.2pc]
&\hspace{.9in} \circ \;
\bigl(\id_{X \otimes X^*} \otimes \ev_X^R \otimes \id_{X \otimes \underline{\text{Hom}}(M,M)} \act \id_M \bigr)
 \bigl(\id_{X \otimes X^* \otimes X} \otimes \coev_{X}^R \otimes \coev_{\one,M} \act \id_M \bigr)
 \\[.2pc]
&\hspace{.9in} \circ \;
\bigl(\coev_X^L \otimes \id_X \act \id_M\bigr)\\[.4pc]
& \overset{\text{rigidity}}{=}
\bigl(\id_{X} \otimes \ev_X^L \act \id_M \bigr)
\bigl( \id_{X \otimes X^*} \otimes e_{X,M} \bigr)
\bigl(\id_{X \otimes X^* \otimes X} \otimes \ev_{M,M}\bigr)
\bigl(\id_{X \otimes X^* \otimes X}  \otimes \coev_{\one,M} \act \id_M \bigr)
 \\[.2pc]
&\hspace{.9in} \circ \;
\bigl(\coev_X^L \otimes \id_X \act \id_M\bigr)\\
& \overset{\eqref{eq:evMMcoev1M}}{=}
\bigl(\id_{X} \otimes \ev_X^L \act \id_M \bigr)
\bigl( \id_{X \otimes X^*} \otimes e_{X,M} \bigr)
\bigl(\coev_X^L \otimes \id_X \act \id_M\bigr)\\[.2pc]
& \overset{\text{rigidity, level ex.}}{=}
e_{X,M}.
\end{align*}
}
By combining the results above, we achieve the following result:
\smallskip
{\small
\begin{align*}
\bigl(\id_X \otimes \xi_{F(M)}^{\mathtt{E}_{\cM}}(\theta_\cM \act \id_{F(M)})\bigr)\bigl(\id_X \otimes \iota_{\cC}(X) \act \id_{F(M)} \bigr)\bigl(\coev_X^L \otimes \id_X \act \id_{F(M)} \bigr)
\; = \;
s_{X,M} \;  F(e_{X,M}) \;  s^{-1}_{X,M},
\end{align*}
}
\noindent which shows that $\widehat{H}_{\cC} \circ \textnormal{Res}_{\hbox{\tiny$\theta_{\cM}$}} \circ H_{\cM} = G_{\cM}$ on objects. This also extends to equality on morphisms.

\smallskip

So, $G_{\cM}$ is an equivalence of categories if and only if $\textnormal{Res}_{\theta_{\cM}}$ is an equivalence of categories, which, in turn, occurs if and only if $\theta_{\cM}$ is an isomorphism of algebras [Lemma~\ref{lem:AlgResEquiv}]. Thus, the  $\cC$-module category $\cM$ is factorizable precisely when it is  nondegenerate; see Definitions~\ref{def:Mfact} and~\ref{def:Mnondeg}. 
\end{proof}


\subsection{On weak factorizability}
\label{ss:wf}
Recall Hypothesis~\ref{hyp:Sec4} and  the copairing $\omega_{\cM}: \one \to \mathtt{E}_{\cC} \otimes \mathtt{E}_{\cM}$ from Definition~\ref{def:omegaM}. Consider the terminology below.

\begin{definition} \label{def:wf}
We say that $\cM$ is {\it weakly factorizable} if the following linear map is bijective:
\begin{equation*}
  \Omega_{\cM} : \Hom_{\cC}(\mathtt{E}_{\cC},\one) \rightarrow \Hom_{\cC}(\one, \mathtt{E}_{\cM}), \qquad f\mapsto (f\otimes \id_{{\tt E}_{\cM}}) \circ \omega_{\cM}.
\end{equation*}
\end{definition}

\begin{example}
The regular braided left $\cC$-module category $\cC_{\text{reg}}$ from Example~\ref{ex:M=C,double} is weakly factorizable precisely when $\cC$ is weakly factorizable as a braided finite tensor category. Indeed, $(\omega_{\cC_{\text{reg}}})^* = \omega_\cC$ as shown in Example~\ref{ex:Mreg-nondeg}, and $\mathtt{E}_{\cC}^* \cong \mathtt{C}_{\cC}$ by \eqref{eq:EcCc}. Now, it is straightforward to show,  for the map $\Omega_{\cC}$ from $\S$\ref{subsec:nondeg-monoidal}, that the diagram below commutes.
\[
\xymatrix@R=1.5pc@C=4pc{
\Hom_{\cC}(\mathtt{E}_{\cC}, \one) 
\ar[r]^(.48){\Omega_{\cC_{\text{reg}}}}
\ar[d]_{(-)^*}
& \Hom_{\cC}(\one, \mathtt{E}_{\cC_{\text{reg}}})
\ar[d]^{(-)^*}\\
\Hom_{\cC}(\one, \mathtt{C}_{\cC})
\ar[r]^{\Omega_{\cC}}
&\Hom_{\cC}(\mathtt{C}_{\cC}, \one)
}
\]
So,  $\Omega_{\cC_{\text{reg}}}$ is bijective precisely when the map $\Omega_{\cC}$  is bijective, as required.
\end{example}

Now we tie weak factorizability to nondegeneracy below.

\begin{lemma} \label{lem:non-wf}
If $\cM$ is nondegenerate,  then it is weakly factorizable.
\end{lemma}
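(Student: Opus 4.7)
The plan is to factor $\Omega_{\cM}$ as a composition of a standard duality bijection followed by post-composition with $\theta_{\cM}$; nondegeneracy (i.e.\ $\theta_{\cM}$ is an isomorphism in $\cC$) then forces $\Omega_{\cM}$ to be a bijection. The argument should be a direct computation, with no real obstacle other than keeping track of the duality identifications $\mathtt{E}_{\cC}^{*}\cong\mathtt{C}_{\cC}$ and $\ev^L$.

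First, recall the rigidity bijection
\[
\psi:\Hom_{\cC}(\mathtt{E}_{\cC},\one)\longrightarrow\Hom_{\cC}(\one,\mathtt{E}_{\cC}^{*}),\qquad f\longmapsto (f\otimes\id_{\mathtt{E}_{\cC}^{*}})\circ\coev^{L}_{\mathtt{E}_{\cC}},
\]
whose inverse is $g\mapsto \ev^{L}_{\mathtt{E}_{\cC}}\circ(g\otimes\id_{\mathtt{E}_{\cC}})$. This is always a bijection (it is a special case of the adjunction $\Hom(X,Y^{*})\cong\Hom(X\otimes Y,\one)$ that holds in any rigid monoidal category).

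Next, I will show that
\[
\Omega_{\cM}=(\theta_{\cM})_{*}\circ\psi,
\]
where $(\theta_{\cM})_{*}:\Hom_{\cC}(\one,\mathtt{E}_{\cC}^{*})\to\Hom_{\cC}(\one,\mathtt{E}_{\cM})$ denotes post-composition with $\theta_{\cM}$. For any $f\in\Hom_{\cC}(\mathtt{E}_{\cC},\one)$, unfolding the definition of $\theta_{\cM}$ and using the triangle identity for $\ev^{L}_{\mathtt{E}_{\cC}}$ and $\coev^{L}_{\mathtt{E}_{\cC}}$ gives
\begin{align*}
\theta_{\cM}\circ\psi(f)
&=(\ev^{L}_{\mathtt{E}_{\cC}}\otimes\id_{\mathtt{E}_{\cM}})\circ(\id_{\mathtt{E}_{\cC}^{*}}\otimes\omega_{\cM})\circ(f\otimes\id_{\mathtt{E}_{\cC}^{*}})\circ\coev^{L}_{\mathtt{E}_{\cC}}\\
&=\bigl((\ev^{L}_{\mathtt{E}_{\cC}}\circ(\psi(f)\otimes\id_{\mathtt{E}_{\cC}}))\otimes\id_{\mathtt{E}_{\cM}}\bigr)\circ\omega_{\cM}\\
&=(f\otimes\id_{\mathtt{E}_{\cM}})\circ\omega_{\cM}=\Omega_{\cM}(f),
\end{align*}
so the factorization holds. (The middle equality simply reshuffles the tensor factors via interchange and then applies $\psi^{-1}\psi=\id$.)

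Finally, if $\cM$ is nondegenerate, then $\theta_{\cM}:\mathtt{E}_{\cC}^{*}\to\mathtt{E}_{\cM}$ is an isomorphism in $\cC$, so the post-composition map $(\theta_{\cM})_{*}$ is a bijection. Since $\psi$ is also a bijection, the composition $\Omega_{\cM}=(\theta_{\cM})_{*}\circ\psi$ is a bijection, i.e.\ $\cM$ is weakly factorizable. The only potential subtlety is verifying the interchange in the second line above; this is routine, but it is the step I would double-check first when writing out the details.
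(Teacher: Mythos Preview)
Your proof is correct and is essentially the same as the paper's. The paper writes down the inverse directly as $\Omega_{\cM}^{-1}(g)=\ev^{L}_{\mathtt{E}_{\cC}}\circ\bigl((\theta_{\cM}^{-1}\circ g)\otimes\id_{\mathtt{E}_{\cC}}\bigr)$, which in your notation is exactly $\psi^{-1}\circ(\theta_{\cM}^{-1})_{*}$; so your factorization $\Omega_{\cM}=(\theta_{\cM})_{*}\circ\psi$ and the paper's explicit inverse are two presentations of the same computation.
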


\begin{proof}
Since $\cM$ is nondegenerate, we have the  isomorphism $\theta_{\cM}: \mathtt{E}_{\cC}^* \to \mathtt{E}_{\cM}$ from Definition~\ref{def:Mnondeg}. It is straightforward to verify that the inverse of $\Omega_{\cM}$ is given by:
\[
\Omega_{\cM}^{-1} :  \Hom_{\cC}(\one, \mathtt{E}_{\cM}) \rightarrow  \Hom_{\cC}(\mathtt{E}_{\cC},\one), \qquad g \mapsto \text{ev}_{{\mathtt E}_\cC}  
\bigl( (\theta_{\cM}^{-1} \circ g) \otimes \id_{{\mathtt E}_\cC} \bigr)
\]
in this case. Thus, $\cM$ is weakly factorizable.
\end{proof}

Note that  weak factorizability is tied to the invertibility of {\it S-matrices} of braided fusion categories; see \cite[$\S$5.1]{shimizu2019non} and references within. Moreover, $S$-matrices for braided module categories  were introduced in the semisimple case in the work of Johnson-Freyd--Reutter; see the proof of \cite[Theorem~2.57]{johnson2024minimal}. So, we pose the following question.

\begin{question} \label{ques:JFR}
What is the connection between the weak factorizability condition here and the invertibility of $S$-matrices for braided module categories  in the semisimple case?
\end{question}


\subsection{On trivializability}
\label{ss:triv}
Recall Hypothesis~\ref{hyp:Sec4}, and consider the  terminology below.

\begin{definition} \label{def:trivM}
The {\it symmetric center} of $\cM$, denoted by $\cZ_{2}(\cM)$, is defined to be the full subcategory of $\cC$  consisting of the objects:
\[
\text{Ob}(\cZ_2(\cM)) := \{ X \in \cC \; | \; e_{X,M}=\id_{X\act M}, \; \forall \, M\in\cM \}.
\]
We say that $\cM$ has {\it trivial symmetric center} if $\cZ_2(\cM) \simeq \mathsf{FdVec}$.
\end{definition}

For instance, the regular braided left $\cC$-module category $(\cC_{\reg}, e = c^2)$ [Example~\ref{ex:M=C,double}] has trivial symmetric center precisely when the braided finite tensor category $\cC$ has trivial symmetric center.

\smallskip

We obtain the preliminary result below.

\begin{lemma}\label{lem:Z2CM-tensor-subcat}
The symmetric center  $\cZ_{2}(\cM)$ is a finite tensor subcategory of $\cC$.
\end{lemma}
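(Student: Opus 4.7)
The plan is to verify that $\cZ_2(\cM)$ satisfies all the closure properties of a finite tensor subcategory of $\cC$, handling the easy properties first and isolating the duals step as the main obstacle.

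The easy closures I will handle first. The unit $\one$ lies in $\cZ_2(\cM)$ by \eqref{eq:braidmod-unit}. For $X, Y \in \cZ_2(\cM)$ and any $M \in \cM$, substituting $e_{X, M} = e_{Y, M} = \id$ into \eqref{eq:brmod1} collapses its right-hand side to $(c_{Y,X} \circ c_{Y,X}^{-1}) \act \id_M = \id$, so $X \otimes Y \in \cZ_2(\cM)$. Closure under subobjects, quotients, and finite direct sums follows from naturality of $e$ combined with faithfulness of $(- \act M)$ (a consequence of exactness and Remark~\ref{rem:act-faithful}): for a mono $f \colon Y \hookrightarrow X$ with $X \in \cZ_2(\cM)$, naturality gives $(f \act \id_M) \circ e_{Y, M} = e_{X, M} \circ (f \act \id_M) = f \act \id_M$, and since $f \act \id_M$ is mono, $e_{Y, M} = \id$; the argument is dual for quotients.

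Next, I will show the inclusion $\cZ_2(\cM) \subseteq \cZ_2(\cC)$. Given $X \in \cZ_2(\cM)$ and any $Y \in \cC$, plugging $e_{X, M} = \id$ and $e_{X, Y \act M} = \id$ into \eqref{eq:brmod2} yields $\id_{XY \act M} = (c_{Y,X} \circ c_{X,Y}) \act \id_M$, whence $c_{Y,X} \circ c_{X,Y} = \id_{X \otimes Y}$ by faithfulness of $(- \act M)$. Hence $X \in \cZ_2(\cC)$, and since the M\"uger center $\cZ_2(\cC)$ is known to be a finite symmetric tensor subcategory of $\cC$ closed under duals (see, e.g., \cite[\S8.20]{etingof2015tensor}), we obtain $X^* \in \cZ_2(\cC)$ for free.

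The remaining and hardest step is to upgrade $X^* \in \cZ_2(\cC)$ to $X^* \in \cZ_2(\cM)$, i.e., to show $e_{X^*, M} = \id$ for every $M$. My plan is to exploit the rigid tensor structure on $\mathsf{Fun}_{\cC|}(\cM, \cM)$ from $\S$\ref{ss:Fun-C}. For $X \in \cZ_2(\cC)$, the equality $c_{X, Y}^{-1} = c_{Y, X}$ together with \eqref{eq:brmod2} implies that $\{e_{X, M}\}_M$ upgrades to a natural automorphism of the $\cC$-module endofunctor $\rho(X) := (X \act -)$ inside the finite tensor category $\mathsf{Fun}_{\cC|}(\cM, \cM)$. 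Its categorical left dual in this rigid tensor category is then a natural automorphism of $\rho(X)^* = \rho(X^*) = (X^* \act -)$, whose value at $M$ unpacks as $((\ev_X^L \otimes \id_{X^*}) \act \id_M) \circ (\id_{X^*} \act e_{X, X^* \act M}) \circ ((\id_{X^*} \otimes \coev_X^L) \act \id_M)$. Using \eqref{eq:brmod2} to expand $e_{X, X^* \act M}$, together with $c_{X, X^*}^{-1} = c_{X^*, X}$ and naturality of $e$ at $\coev_X^L$ and $\ev_X^L$, I will identify this dual automorphism with $e_{X^*, -}$. Since $e_{X, M} = \id$ dualizes trivially to the identity on $\rho(X^*)$, this forces $e_{X^*, M} = \id$, completing closure under duals.

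Finally, finiteness of $\cZ_2(\cM)$ is immediate: as a full subcategory of the finite tensor category $\cC$ closed under subquotients and direct sums, it inherits finite-dimensional Hom spaces, finite-length objects, and finitely many isomorphism classes of simple objects. Combined with the preceding steps, this shows $\cZ_2(\cM)$ is a finite tensor subcategory of $\cC$. The principal obstacle is the duality matching in the third paragraph: while the dual of $e_X = \id$ in $\mathsf{Fun}_{\cC|}(\cM, \cM)$ is trivially identity, identifying it with the a priori-given natural transformation $e_{X^*}$ is where the braided module axioms must do essential work, via explicit manipulation of the braidings involving $X^*$.
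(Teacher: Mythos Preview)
Your overall strategy is sound, but it takes a genuinely different and more circuitous route than the paper's proof. The paper invokes \cite[Lemma~4.4]{shimizu2019non} to reduce to closure under $\otimes$ and duals, handles the tensor product exactly as you do via \eqref{eq:brmod1}, and then proves $e_{X^*,M}=\id$ by a direct four-line computation: insert a rigidity identity, insert $e_{X,M}=\id$ and a canceling pair $c_{X^*,X}\circ c_{X^*,X}^{-1}$, recognize the braided-module axiom \eqref{eq:brmod1} to collapse to $e_{X\otimes X^*,M}$, and then use naturality of $e$ at $\coev_X^L$ together with \eqref{eq:braidmod-unit}. No passage through $\cZ_2(\cC)$ or $\mathsf{Fun}_{\cC|}(\cM,\cM)$ is needed.

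Your detour has a nice bonus: the inclusion $\cZ_2(\cM)\subseteq\cZ_2(\cC)$ via \eqref{eq:brmod2} is correct and worth recording. But the payoff you expect from it---namely, dualizing the identity $e_{X,-}=\id$ inside $\mathsf{Fun}_{\cC|}(\cM,\cM)$---does not actually shortcut anything. The dual of the identity is the identity, trivially; the entire burden lies in your step (4), identifying the categorical dual $(e_{X,-})^*$ with the a priori given transformation $e_{X^*,-}$. Unwinding your formula for $(e_{X,-})^*_M$ and expanding $e_{X,X^*\act M}$ via \eqref{eq:brmod2} leads you straight back to a rigidity-and-braiding manipulation of the same shape and difficulty as the paper's direct computation (indeed, you will need \eqref{eq:brmod1} and naturality at $\coev_X^L$ to close it). There is also hidden overhead: you must check that $(\rho(X),\,c_{X,-}\act\id)$ really satisfies \eqref{eq:mod-s-axioms}--\eqref{eq:mod-s-axioms-2}, that $\rho(X^*)$ with a compatible structure is its left dual in $\mathsf{Fun}_{\cC|}(\cM,\cM)$, and that $\ev_X^L\act-$ and $\coev_X^L\act-$ are $\cC$-module transformations---all of which use $X\in\cZ_2(\cC)$ in several places. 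None of this is wrong, but it is considerably more scaffolding than the paper's five-line calculation, and you should be alert to the risk of circularity: if your identification $(e_{X,-})^*=e_{X^*,-}$ ends up using $e_{X,-}=\id$ (rather than just $X\in\cZ_2(\cC)$), then you have assumed what you want to prove.
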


\begin{proof}
By \cite[Lemma~4.4]{shimizu2019non}, it suffices to show that $\cZ_{2}(\cM)$ is closed under monoidal product and duals. Take objects $X,Y\in\cZ_{2}(\cM)$. Then, for $M\in\cM$, we have:
{\small
\begin{align*}
  e_{X\otimes Y,M} & \; \stackrel{\eqref{eq:brmod1}}{=}\;  (\id_X \act e_{Y,M}) (c_{Y,X} \act \id_M)  (\id_Y \act e_{X,M})(c_{Y,X}^{-1} \act \id_M)
  \\[.2pc]
  & \; = \;  (c_{Y,X} \act \id_M)(c_{Y,X}^{-1} \act \id_M) \; =\;  \id_{(X\otimes Y)\act M}.
\end{align*}
}
Thus, $X\otimes Y \in \cZ_{2}(\cM)$. We also compute:
{\small
\begin{align*}
  e_{X^*,M} & \;  \overset{\text{rigidity}}{=}
  e_{X^*,M} \bigl(\ev^L_X\otimes\id_{X^*} \act \id_M \bigr) \bigl(\id_{X^*}\otimes\coev^L_X \act \id_M \bigr) 
  \\[.4pc]
   & \;  = 
  e_{X^*,M} \bigl(\ev^L_X\otimes\id_{X^*} \act \id_M \bigr) \bigl(\id_{X^*} \otimes (c_{X^*,X} \circ c^{-1}_{X^*,X}) \act\id_M \bigr)\bigl(\id_{X^*}\otimes\coev^L_X \act \id_M \bigr) 
  \\[.4pc]
   & \; \overset{X \in \cZ_2(\cM), \, \text{level ex.}}{=} 
     \bigl(\ev^L_X \otimes \id_{X^*} \act \id_M \bigr) 
  \bigl(\id_{X^*} \otimes \id_X \otimes e_{X^*,M} \bigr) \bigl(\id_{X^*} \otimes c_{X^*,X}\act\id_M \bigr) 
  \\[.2pc]
   & \; \qquad \; \qquad \; \qquad  \circ \bigl(\id_{X^*}\otimes\id_{X^*}\otimes e_{X,M} \bigr)
   \bigl(\id_{X^*} \otimes c_{X^*,X}^{-1} \act \id_M \bigr) \bigl(\id_{X^*}\otimes\coev^L_X \act \id_M \bigr)  
  \\[.2pc]
  & \overset{\eqref{eq:brmod1}}{=}
  \bigl(\ev^L_X\otimes\id_{X^*}\act\id_M \bigr) \bigl(\id_{X^*} \otimes e_{X\otimes X^*,M}\bigr)
  \bigl(\id_{X^*}\otimes\coev^L_X \act \id_M \bigr) 
  \\[.2pc]
   & \; \overset{\text{$e$\,nat'l}, \, \eqref{eq:braidmod-unit}}{=} \; \bigl(\ev^L_X\otimes\id_{X^*} \act \id_M \bigr)
   \bigl(\id_{X^*}\otimes\coev^L_X \act \id_M \bigr) 
  \\
   & \;  \overset{\text{rigidity}}{=} \; \id_{X^*\act M}.
\end{align*}
}

\noindent  Thus, $X^*\in\cZ_{2}(\cM)$.
With a similar argument, we also have closure under right duals.
\end{proof}

We also need the technical lemma below pertaining to the end algebras introduced in $\S\S$\ref{ss:coendBTC},~\ref{ss:Endmod}. The techniques in the next two results are inspired by \cite[$\S$5.3]{shimizu2019non}.

\begin{lemma} \label{lem:Z2M-u}
Denote $\cD:= \cZ_2(\cM)$, and consider the canonical morphism $\phi: \mathtt{E}_{\cC} \to \mathtt{E}_{\cD}$ from the universal property of ends (namely, $\pi_\cD(X) \circ \phi = \pi_\cC(X)$ for each $X \in \cD$). Then:
\begin{enumerate}[\upshape (a)]
\item $\phi$ is an epimorphism, 

\smallskip

\item $\phi$ is unital, that is, $\phi \circ u_{\mathtt{E}_{\cC}} = u_{\mathtt{E}_{\cD}}$, and 

\smallskip

\item $\bigl(\phi\otimes\id_{\mathtt{E}_{\cM}} \bigr) \, \omega_\cM  = 
\bigl(\phi \otimes \id_{\mathtt{E}_{\cM}}\bigr) \bigl (u_{\mathtt{E}_{\cC}} \otimes u_{\mathtt{E}_{\cM}} \bigr)$.
\end{enumerate}
\end{lemma}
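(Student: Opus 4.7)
The plan is to treat (b) and (c) by direct universal-property arguments and to reduce (a) to a Shimizu-style coend-embedding result for topologizing subcategories. Throughout, I exploit that the universal cone $\{\pi_\cD(X)\}_{X \in \cD}$ defining $\mathtt{E}_\cD$ is jointly monic, and that tensoring with any object preserves limits since $\cC$ is rigid.

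For (b), I will show that $\phi \circ u_{\mathtt{E}_\cC}$ and $u_{\mathtt{E}_\cD}$ become equal after composing with every $\pi_\cD(X)$, $X \in \cD$: by the defining relation $\pi_\cD(X)\phi = \pi_\cC(X)$ together with Lemma~\ref{lem:EmAlg} applied to $\mathtt{E}_\cC$ and to $\mathtt{E}_\cD$ as end algebras of their respective regular module categories, each side equals $\coev_{\one, X}$. Joint monicity of the cone closes this. For (c), the crucial observation is that $\cD$ is closed under duals by Lemma~\ref{lem:Z2CM-tensor-subcat}, so $X \in \cD$ forces $X^* \in \cD$ and thus $e_{X^*,M} = \id_{X^* \triangleright M}$; consequently $\underline{\textnormal{Hom}}(\id_M, e_{X^*,M})$ in Definition~\ref{def:omegaM} becomes the identity and
\[
(\pi_\cC(X) \otimes \pi_\cM(M)) \circ \omega_\cM \; = \; \coev_{\one, X} \otimes \coev_{\one, M}
\]
for all $X \in \cD$ and $M \in \cM$. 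This matches $(\pi_\cC(X) \otimes \pi_\cM(M)) \circ (u_{\mathtt{E}_\cC} \otimes u_{\mathtt{E}_\cM})$ by Lemma~\ref{lem:EmAlg}. Substituting $\pi_\cC(X) = \pi_\cD(X)\phi$ and applying joint monicity of the universal cone $\{\pi_\cD(X) \otimes \pi_\cM(M)\}$ of $\mathtt{E}_\cD \otimes \mathtt{E}_\cM$ (again using that tensor preserves ends in a rigid category) delivers (c).

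Part (a) is the main obstacle. My approach is first to verify that $\cD = \cZ_2(\cM)$ is a \emph{topologizing} subcategory of $\cC$, i.e., closed under subobjects, quotients, and finite direct sums. Direct-sum closure is clear from biadditivity of $e$. For a monomorphism $f \colon Y \hookrightarrow X$ with $X \in \cD$, naturality of $e$ in the first slot gives $(f \triangleright \id_M) \circ e_{Y,M} = e_{X,M} \circ (f \triangleright \id_M) = f \triangleright \id_M$; since $(- \triangleright M)$ is exact and faithful (Remark~\ref{rem:act-faithful}), the monomorphism $f \triangleright \id_M$ is left-cancellable, forcing $e_{Y,M} = \id$. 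The dual argument with epimorphisms handles quotients. Combined with Lemma~\ref{lem:Z2CM-tensor-subcat}, $\cD$ is a topologizing tensor subcategory of the finite tensor category $\cC$. Adapting the strategy of \cite[$\S$5.3]{shimizu2019non}, I then expect to show that the canonical comparison $\mathtt{C}_\cD \to \mathtt{C}_\cC$ is a monomorphism, likely by exhibiting an explicit retraction built from a projective generator of $\cD$. Dualizing this monomorphism through the identifications $\mathtt{E}_\cC^* \cong \mathtt{C}_\cC$ and $\mathtt{E}_\cD^* \cong \mathtt{C}_\cD$ from \eqref{eq:EcCc} and rigidity of $\cC$ then yields the desired epimorphism $\phi$. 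Constructing the retraction is the technical heart of the argument.
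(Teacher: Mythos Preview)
Your arguments for (b) and (c) are correct and match the paper's approach in substance. The paper dualizes (b) to the coend side and checks $u_{\mathtt{E}_\cC}^*\circ\phi^* = u_{\mathtt{E}_\cD}^*$ against each $\iota_\cD(X^*)$, which is exactly your end-side argument transported through \eqref{eq:EcCc}--\eqref{eq:EcCc2}. For (c) the paper draws the commutative diagram you describe in words; the key point in both is that $X\in\cD$ forces $X^*\in\cD$ (Lemma~\ref{lem:Z2CM-tensor-subcat}), so $e_{X^*,M}=\id$ and the defining diagram for $\omega_\cM$ collapses to $\coev_X^L\otimes\coev_{\one,M}$.

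For (a), the paper does not reconstruct the retraction at all: it simply invokes \cite[Lemma~4.9]{shimizu2017character}, which gives the epimorphism $\mathtt{E}_\cC\to\mathtt{E}_\cD$ (equivalently, the monomorphism $\mathtt{C}_\cD\to\mathtt{C}_\cC$) once $\cD$ is a suitable tensor subcategory. Your topologizing verification is correct and is precisely the hypothesis needed to apply Shimizu's result, so it usefully fills in what the paper leaves implicit via Lemma~\ref{lem:Z2CM-tensor-subcat}. However, you are pointing at the wrong paper for the retraction: the argument lives in \cite{shimizu2017character}, not in \cite[\S5.3]{shimizu2019non} (the latter only uses the result). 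If you want to be self-contained you can indeed carry out the projective-generator retraction, but be aware that this is a nontrivial standalone lemma and there is no need to reprove it here.
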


\begin{proof}
We employ Lemma~\ref{lem:Z2CM-tensor-subcat}, treating $\cD$ as a finite tensor category. 

\smallskip

(a) This holds by \cite[Lemma~4.9]{shimizu2017character}.

\smallskip

(b) By taking duals, it suffices to show that $u_{\mathtt{E}_{\cC}}^* \circ \phi^* = u_{\mathtt{E}_{\cD}}^*$.  This holds as, for any $X \in \cD$:
\[
u_{\mathtt{E}_{\cC}}^* \; \phi^* \; \iota_{\cD}(X^*) 
\, = \,
\varepsilon_{\mathtt{C}_{\cC}} \; \phi^* \; \iota_{\cD}(X^*) 
\,= \, 
\varepsilon_{\mathtt{C}_{\cC}}\;  \iota_{\cC}(X^*) 
\, = \,
\ev^L_{X^*}
\, = \,
\varepsilon_{\mathtt{C}_{\cD}} \;  \iota_{\cD}(X^*) 
\, = \,
u_{\mathtt{E}_{\cD}}^* \;  \iota_{\cD}(X^*),  
\]
where these equalities follow from Lemma~\ref{lem:EcHopfAlg}(a),~\eqref{eq:EcCc}, and~\eqref{eq:EcCc2}.

\smallskip

(c) This holds, since for $X\in\cD$ and $M\in\cM$, the following diagram commutes.

\vspace{-.1in}

{\small
\[
\xymatrix@C=1pc@R=1pc{
\one 
\ar[rr]^{\omega_{\cM}}
\ar[d]_{u_{\mathtt{E}_{\cC}} \, \otimes  \, u_{\mathtt{E}_{\cM}}}
\ar@/^1.4pc/[rdd]^(.7){\coev^L_X \, \otimes \, \coev_{\one,M}}
&& \mathtt{E}_{\cC} \otimes  \mathtt{E}_{\cM}
\ar[rr]^{\phi \, \otimes \, \id}
\ar@/^1pc/[rrdddd]^{\pi_{\cC}(X) \, \otimes \, \pi_{\cM}(M)}
&& \mathtt{E}_{\cD} \otimes  \mathtt{E}_{\cM}
\ar[dddd]^{\pi_{\cD}(X) \, \otimes \, \pi_{\cM}(M)}\\
\mathtt{E}_{\cC} \otimes  \mathtt{E}_{\cM}
\ar[ddd]_{\phi \, \otimes \, \id}
\ar@/_1.6pc/[rd]^(.25){\pi_{\cC}(X) \, \otimes \, \pi_{\cM}(M)}
&& &&\\
& X \otimes X^* \otimes \underline{\text{Hom}}(M,M)
\ar[r]^{\sim}
& X  \otimes \underline{\text{Hom}}(M,X^* \act M)
\ar[d]^{\id \, \otimes \, \underline{\text{Hom}}(M, e_{X^*,M})}
&&\\
&& X  \otimes \underline{\text{Hom}}(M,X^* \act M)
\ar[rrd]^{\sim}
&&\\
\mathtt{E}_{\cD} \otimes  \mathtt{E}_{\cM}
\ar[ruu]_{\; \pi_{\cD}(X) \, \otimes \, \pi_{\cM}(M)}
\ar[rrrr]^{\pi_{\cD}(X) \, \otimes \, \pi_{\cM}(M)}
&&&&  X \otimes X^* \otimes \underline{\text{Hom}}(M,M)
}
\]
}

\smallskip

\noindent The top left triangle commutes by Lemma~\ref{lem:EcHopfAlg}(a),~\eqref{eq:EcCc}, and  also by Lemma~\ref{lem:EmAlg}. The bottom left  and top right triangles commute by the universal property of $\phi$. The top middle region is Definition~\ref{def:omegaM}, and the bottom middle region commutes since $X^* \in \cD$ [Lemma~\ref{lem:Z2CM-tensor-subcat}].
\end{proof}

This brings us to the main result of this part.

\begin{theorem} \label{thm:wf-triv}
 If $\cM$ is weakly factorizable, then $\cM$ has trivial symmetric center. 
\end{theorem}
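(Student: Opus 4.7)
The plan is to let $\cD := \cZ_2(\cM)$ (a finite tensor subcategory of $\cC$ by Lemma~\ref{lem:Z2CM-tensor-subcat}) and to show $\cD \simeq \FdVec$ via a dimension bound from weak factorizability combined with the symmetry of $\cD$, reducing the conclusion to Shimizu's Theorem~\ref{thm:Shimizu} applied to $\cD$ itself.

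For the dimension bound, take any $h \in \Hom_\cC(\mathtt{E}_\cD, \one)$. Using Lemma~\ref{lem:Z2M-u}(b),(c):
\[
\Omega_\cM(h \circ \phi) \;=\; (h \otimes \id_{\mathtt{E}_\cM})(\phi \otimes \id_{\mathtt{E}_\cM})\omega_\cM \;=\; (h \circ u_{\mathtt{E}_\cD}) \otimes u_{\mathtt{E}_\cM} \;=\; \lambda_h \cdot u_{\mathtt{E}_\cM},
\]
where $\lambda_h := h \circ u_{\mathtt{E}_\cD} \in \End_\cC(\one) = \kk$. Since $\phi$ is epic (Lemma~\ref{lem:Z2M-u}(a)), the pullback $(-\circ \phi)$ is injective, and weak factorizability makes $\Omega_\cM$ bijective; as $u_{\mathtt{E}_\cM} \neq 0$ (via $\pi_\cM(M) \circ u_{\mathtt{E}_\cM} = \coev_{\one, M} \neq 0$), the composite $h \mapsto \lambda_h \cdot u_{\mathtt{E}_\cM}$ is an injection into the one-dimensional space $\kk \cdot u_{\mathtt{E}_\cM}$. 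Hence $\dim_\kk \Hom_\cC(\mathtt{E}_\cD, \one) \leq 1$.

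For symmetry of $\cD$ (i.e., $\cD \subseteq \cZ_2(\cC)$), note that for $X \in \cD$ and $Y \in \cC$, equation \eqref{eq:brmod2} with $e_{X, Y \triangleright M} = \id = e_{X, M}$ yields $(c_{Y, X} \circ c_{X, Y}) \triangleright \id_M = \id$ for every $M \in \cM$. The joint action $\prod_M (-\triangleright M): \cC \to \prod_M \cM$ is faithful on morphisms: via the adjunction $(-\triangleright M) \dashv \underline{\Hom}(M, -)$ and \eqref{eq:internalHom-2}, the mate of a morphism $f: Z \to W$ equals $f \otimes u_{\mathtt{E}_\cM}: Z \to W \otimes \mathtt{E}_\cM$ (using the universal property of $\mathtt{E}_\cM$); and since $\one$ is simple in $\cC$, the nonzero $u_{\mathtt{E}_\cM}$ is monic, so $f \otimes u_{\mathtt{E}_\cM} = 0$ forces $f = 0$. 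Hence $c_{Y, X} \circ c_{X, Y} = \id_{X \otimes Y}$ in $\cC$, making $\cD$ a symmetric tensor subcategory.

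Combining: $\cD$ is a symmetric finite tensor category (hence spherical and unimodular), which gives $\mathtt{E}_\cD \cong \mathtt{C}_\cD^*$ via the symmetric braiding with pivotal structure, and $\mathtt{C}_\cD \cong \mathtt{C}_\cD^*$ via the Frobenius structure of the Hopf algebra $\mathtt{C}_\cD$. The bound then translates to $\dim \Hom_\cD(\one, \mathtt{C}_\cD) \leq 1$ and $\dim \Hom_\cD(\mathtt{C}_\cD, \one) \leq 1$; the canonical elements $u_{\mathtt{C}_\cD}$ and $\varepsilon_{\mathtt{C}_\cD}$ force both to equal $1$, and the Hopf-pairing axiom $\omega_\cD(u_{\mathtt{C}_\cD} \otimes \id) = \varepsilon_{\mathtt{C}_\cD}$ identifies $\Omega_\cD$ as a bijection of one-dimensional spaces. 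Hence $\cD$ is weakly factorizable, so Shimizu's Theorem~\ref{thm:Shimizu} applied to $\cD$ yields $\cZ_2(\cD) \simeq \FdVec$; combined with $\cZ_2(\cD) = \cD$ (by symmetry), $\cD \simeq \FdVec$. The main obstacle is this last step, specifically establishing the Frobenius self-duality $\mathtt{C}_\cD \cong \mathtt{C}_\cD^*$ in the finite (not necessarily semisimple) setting that upgrades the one-sided bound on $\Hom_\cC(\mathtt{E}_\cD, \one)$ into the two-sided bijectivity of $\Omega_\cD$ needed to invoke Shimizu.
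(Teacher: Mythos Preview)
Your dimension-bound argument (the injection $h \mapsto \lambda_h \cdot u_{\mathtt{E}_\cM}$ of $\Hom_\cC(\mathtt{E}_\cD, \one)$ into a one-dimensional space) is exactly the paper's argument; the paper phrases it as the composite $\beta = \Omega_\cM \circ \Hom_\cC(\phi, \one)$ and also records the lower bound via $\beta(\varepsilon_{\mathtt{E}_\cD}) = u_{\mathtt{E}_\cM} \neq 0$, yielding $\dim_\kk \Hom_\cC(\mathtt{E}_\cD, \one) = 1$, equivalently $\dim_\kk \Hom_\cD(\one, \mathtt{C}_\cD) = 1$.

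From there the approaches diverge, and yours carries the gap you name. The paper does not pass through weak factorizability of $\cD$ at all: it simply invokes \cite[Lemma~5.7]{shimizu2019non}, which asserts that a finite tensor category $\cD$ with $\dim_\kk \Hom_\cD(\one, \mathtt{C}_\cD) = 1$ is equivalent to $\FdVec$. So the one-sided bound already suffices; your symmetry argument $\cD \subseteq \cZ_2(\cC)$ (which is correct and a nice observation) and the attempt to control $\Hom_\cD(\mathtt{C}_\cD, \one)$ are unnecessary.

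Your acknowledged obstacle is genuine and not easily patched along the lines you sketch. The self-duality $\mathtt{C}_\cD \cong \mathtt{C}_\cD^*$ is essentially unimodularity of $\cD$, and symmetric finite tensor categories need not be unimodular. More concretely, $\Hom_\cD(\mathtt{C}_\cD, \one) \cong \Hom_\cD(\one, \mathtt{E}_\cD) \cong \mathrm{End}(\mathrm{Id}_\cD)$, which for $\cD = \Rep(H)$ identifies with the center $Z(H)$; this can have dimension strictly greater than $1$ even when $H$ is cocommutative (e.g.\ $H = \kk[x]/(x^p)$ in characteristic~$p$, where $Z(H) = H$). Thus surjectivity of $\Omega_\cD$ does not follow from what you have assembled, and the detour through Shimizu's main theorem does not close. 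Replace that final step with the direct citation of \cite[Lemma~5.7]{shimizu2019non}.
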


\begin{proof}
Assume $\cM$ is weakly factorizable, and denote $\cD:=\cZ_{2}(\cM)$. By Lemma~\ref{lem:Z2CM-tensor-subcat} and \cite[Lemma~5.7]{shimizu2019non}, it suffices to show that $\text{dim}_\kk \Hom_{\cD}(\one, \mathtt{C}_{\cD}) = 1$. Since $\Hom_{\cD}(\one, \mathtt{C}_{\cD}) \cong \Hom_{\cD}(\mathtt{E}_{\cD},\one)$ by \eqref{eq:EcCc}, and $\cD$ is a full subcategory of $\cC$, the result holds when:
\begin{equation} \label{eq:STS}
\text{dim}_\kk \Hom_{\cC}(\mathtt{E}_{\cD}, \one) = 1.
\end{equation}

Now retain the notation of Lemma~\ref{lem:Z2M-u}, and consider the map below: 
\[
\xymatrix@C=5pc@R=0.2pc{
 \beta: \Hom_{\cC}(\mathtt{E}_{\cD},\one)
 \ar[r]^{\Hom_{\cC}(\phi, \one)} 
 &\Hom_{\cC}(\mathtt{E}_{\cC},\one) \ar[r]^{\Omega_\cM} 
 &\Hom_{\cC}(\one,\mathtt{E}_{\cM}),\\ 
  f \ar@{|->}[r]
& f  \phi \ar@{|->}[r]
& (f \phi \otimes \id_{\mathtt{E}_{\cM}})\, \omega_{\cM}.
}
\]
We will show that $\beta$ is injective and $\text{dim}_\kk \, \text{im}(\beta) = 1$. This will establish \eqref{eq:STS}.

Lemma~\ref{lem:Z2M-u}(a) implies that $\phi$ is epic, so $\Hom_{\cC}(\phi, \one)$ is injective. Moreover, $\Omega_{\cM}$ is bijective by assumption. Thus, $\beta$ is injective. Next, Lemma~\ref{lem:Z2M-u}(c) implies that for $f\in \Hom_{\cC}(\mathtt{E}_{\cD},\one)$, 
\[ 
\beta(f) 
\; =  \; 
(f \phi \otimes \id_{\mathtt{E}_{\cM}})\, \omega_{\cM} 
\; =  \; 
(f \phi\otimes \id_{\mathtt{E}_{\cM}}) (u_{\mathtt{E}_{\cC}} \otimes u_{\mathtt{E}_{\cM}})
\; =  \; 
c(f) \, u_{\mathtt{E}_{\cM}}, 
\]
where $c(f)\in\kk$ is the scalar corresponding to the following endomorphism of $\one$:
\[
  c(f): \, \one \xrightarrow{u_{\mathtt{E}_{\cC}}} \mathtt{E}_{\cC} \xrightarrow{ \phi} \mathtt{E}_{\cD} \xrightarrow{f} \one.
\]
So, $\text{dim}_\kk \, \text{im}(\beta) \leq 1$.
Note that $\mathtt{E}_{\cD}$ is a Hopf algebra with counit $\varepsilon_{\mathtt{E}_{\cD}} \in  \Hom_{\cD}(\mathtt{E}_{\cD},\one) = \Hom_{\cC}(\mathtt{E}_{\cD},\one)$; see Lemma~\ref{lem:EcHopfAlg} and \eqref{eq:EcCc}.  Thus, by Lemma~\ref{lem:Z2M-u}(b), we get that
\[
c(\varepsilon_{\mathtt{E}_{\cD}}) 
\; = \; 
\varepsilon_{\mathtt{E}_{\cD}} \circ \phi \circ u_{\mathtt{E}_{\cC}} 
\; = \; 
\varepsilon_{\mathtt{E}_{\cD}} \circ u_{\mathtt{E}_{\cD}} 
\; = \; 
\id_{\one}.  
\]
Thus, $\beta(\mathtt{E}_{\cD})\neq 0$. Consequently,  $\text{dim}_\kk \, \text{im}(\beta) = 1$, which completes the proof.
\end{proof}

Naturally, Shimizu's Theorem [Theorem~\ref{thm:Shimizu}] prompts the inquiry below.

\begin{question} \label{ques:conv}
When do the converse statements of Lemma~\ref{lem:non-wf} and Theorem~\ref{thm:wf-triv} hold? 
\end{question}

Namely, Shimizu's arguments in the finite tensor case \cite{shimizu2019non} proceeds as follows: nondegeneracy $\Rightarrow$ weak-factorizability (cf. Lemma~\ref{lem:non-wf}); weak-factorizability $\Rightarrow$ trivial symmetric center (cf.  Theorem~\ref{thm:wf-triv}); trivial symmetric center $\Leftrightarrow$ factorizability; and factorizability $\Leftrightarrow$ nondegeneracy (cf. 
 Theorem~\ref{thm:FactNondeg}).
The difficulty of Question~\ref{ques:conv} lies in getting the trivial symmetric center condition to imply factorizability/ nondegeneracy; Definition~\ref{def:trivM} may need to be modified in future work.


\section{Factorizable comodule algebras} 
In this section, we characterize the nondegeneracy condition
of braided module categories over representation categories of quasitriangular Hopf algebras, by introducing the notion of factorizability for comodule algebras. We first provide background material on (quasitriangular) Hopf algebras  and (quasitriangular) comodule algebras in $\S$\ref{subsec:Hopf-comodules} and $\S$\ref{subsec:quasitriangular}. Our main result on factorizable comodule algebras is then presented in $\S$\ref{subsec:nondegenerate}.
Finally, in $\S$\ref{subsec:examples-ndcomodules}, we provide examples of factorizable comodule algebras, including those pertaining to the reflective centers from~$\S$\ref{sec:refcenter}.

\begin{notation} \label{not:Sec6}
In this section, we will use the following conventions:
\begin{enumerate}[(a)]
  \item For a finite-dimensional vector space $X$, we will denote its basis by $\{x_i\}$, and its dual basis by $\{x^i\}$, where $\langle x^i, x_j\rangle = \delta_{i,j}$ (Kronecker delta).
  \smallskip
  \item We will also use the Einstein convention to suppress summation symbols in expressions involving repeated indices.
\end{enumerate}
\end{notation}

\subsection{Hopf algebras and comodule algebras}\label{subsec:Hopf-comodules}

Take a Hopf algebra $H:=(H,m,u,\Delta,\varepsilon,S)$ over~$\Bbbk$. We will use the Sweedler notation for comultiplication, namely $\Delta(h) =: h_{(1)} \otimes_\Bbbk h_{(2)}$. Moreover, denote $\otimes:= \otimes_\Bbbk$. 
Then, the category 
\[
\cC:=H\text{-}\mathsf{FdMod}
\]
of finite-dimensional $\Bbbk$-modules over a finite-dimensional Hopf algebra $H$ is a finite tensor category.

\subsubsection{Preliminary Hopf identities}
When $H$ is finite-dimensional, the dual space $H^*$ is a Hopf algebra where, for all $f,f' \in H^*$ and $h, h' \in H$:
\begin{equation} \label{eq:dualHopf}
\begin{array}{c}
\langle ff', h \rangle := \langle f, h_{(1)} \rangle \; \langle f', h_{(2)} \rangle, 
\quad \quad  
\langle f, hh' \rangle := \langle f_{(1)}, h \rangle \; \langle f_{(2)}, h' \rangle,\\[.4pc]
\langle 1_{H^*}, h \rangle := \varepsilon_H(h),
\quad \quad 
\varepsilon_{H^*}(f):=\langle f, 1_H \rangle,
\quad \quad 
\langle S_{H^*}(f), h \rangle:=\langle f, S_H(h) \rangle.
\end{array}
\end{equation}

Moreover, for a basis $\{h_i\}$ of $H$, for all $f \in H^*$ and $h \in H$, we get that
\begin{equation} \label{eq:dualbasis}
\langle f, h_i \rangle \; h^i = f, \quad \quad 
\langle h^i, h \rangle \;  h_i = h.
\end{equation}

Note the following two straightforward identities for $X \in \cC$ with basis $\{x_i\}$, for $H$ with basis $\{h_i\}$, and for all $h, \, h', \, h'' \in H$:
\begin{align}
  \label{eq:xi-xi} (h\cdot x_i) \otimes \langle x^i,- \rangle & \; =  \; x_i \otimes \langle x^i, h \cdot (-) \rangle,  \\
  \label{eq:hi-hi} h' \, h_i \, h'' \otimes \langle h^i,- \rangle & \; =  \; h_i\otimes \langle h^i, h' (-) h''\rangle. 
\end{align}

Next, we recall some features of $\cC$ that will be needed later.

\begin{lemma}\label{lem:H-Mod}
Take $h\in H$. Then, for $X,\, Y\in \cC$, we have the statements below.
\begin{enumerate}[\upshape (a)]
\item $X\otimes Y\in\cC$ via $h\cdot(x\otimes y) = (h_{(1)}\cdot x) \otimes (h_{(2)}\cdot y)$, for $x\in X, \, y\in Y$.

\smallskip

\item The unit object of $\cC$ is $\kk_{\varepsilon}=\kk$ as a vector space with $H$-action given by $h\cdot 1 = \varepsilon(h)$.

\smallskip

\item The dual objects of $X \in \cC$ are $X^* = {}^*X =\Hom_\Bbbk(X,\Bbbk)$ as vector spaces with $H$-action given by $ \langle h\cdot x^*,-\rangle = \langle x^*, S(h) \cdot -\rangle$ and $ \langle h\cdot {}^*x,-\rangle = \langle {}^*x, S^{-1}(h) \cdot -\rangle$ for $x^*\in X^*$ and ${}^*x\in {}^*X$. 

\smallskip

\item The evaluation and coevaluation morphisms are given by: $\ev: X^*\otimes X\to \Bbbk$, $x^*\otimes x\mapsto \langle x^*,x\rangle$ and $\coev: \Bbbk\to X\otimes X^*$, $1\mapsto x_i\otimes x^i$. \qed
\end{enumerate}  
\end{lemma}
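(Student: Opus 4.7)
The plan is to verify each of (a)--(d) by direct computation from the Hopf algebra axioms of $H$, combined with the rigidity of $\FdVec$. All four are classical assertions about the tensor category $\cC = H\text{-}\mathsf{FdMod}$, and the only ingredients needed are that $\Delta$ and $\varepsilon$ are algebra maps, the antipode axioms $S(h_{(1)})h_{(2)} = \varepsilon(h)1_H = h_{(1)}S(h_{(2)})$, and the invertibility of $S$ in the finite-dimensional setting (by Larson--Sweedler).

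For (a), associativity of the formula $h \cdot (x\otimes y) = (h_{(1)}\cdot x)\otimes(h_{(2)}\cdot y)$ reduces to $\Delta$ being multiplicative, and unitality to $\Delta(1_H) = 1_H\otimes 1_H$. For (b), the same reasoning applied to $\varepsilon$ shows that $\kk_\varepsilon$ is a module; to see that $\kk_\varepsilon$ is the monoidal unit of $\cC$, I would verify that the standard unit isomorphisms $l_X \colon \kk_\varepsilon \otimes X \to X$ and $r_X \colon X \otimes \kk_\varepsilon \to X$ are $H$-linear, which follows directly from the counit identity $\varepsilon(h_{(1)})h_{(2)} = h = h_{(1)}\varepsilon(h_{(2)})$.

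For (c), the formulas $\langle h\cdot x^*,-\rangle = \langle x^*, S(h)\cdot -\rangle$ and $\langle h\cdot {}^*x,-\rangle = \langle {}^*x, S^{-1}(h)\cdot -\rangle$ define left $H$-actions precisely because $S$ and $S^{-1}$ are anti-multiplicative; the existence of $S^{-1}$ is where finite-dimensionality enters. For (d), I would first verify that $\ev$ and $\coev$ are morphisms in $\cC$. The former follows from
\[
\ev(h\cdot (x^*\otimes x)) \;=\; \langle h_{(1)}\cdot x^*,\, h_{(2)}\cdot x\rangle \;=\; \langle x^*,\, S(h_{(1)})h_{(2)}\cdot x\rangle \;=\; \varepsilon(h)\langle x^*,x\rangle,
\]
and the latter after one expands $h\cdot(x_i\otimes x^i) = (h_{(1)}\cdot x_i)\otimes (h_{(2)}\cdot x^i)$ and applies \eqref{eq:xi-xi}, \eqref{eq:dualbasis}, and $h_{(1)}S(h_{(2)}) = \varepsilon(h)1_H$ to rewrite the sum as $\varepsilon(h)(x_i\otimes x^i)$. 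The snake (rigidity) identities then collapse to the completeness relation $\langle x^i, x\rangle x_i = x$ of \eqref{eq:dualbasis} after the evident identifications $\kk\otimes X \cong X \cong X\otimes \kk$. The statements for the right dual ${}^*X$ are handled analogously with $S$ replaced by $S^{-1}$ throughout.

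There is no substantive obstacle here. The only point requiring attention is the appeal to bijectivity of $S$ (via Larson--Sweedler) when defining the ${}^*X$-action in (c); every remaining step is a routine unpacking of Sweedler notation and the defining axioms of a Hopf algebra.
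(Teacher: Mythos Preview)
Your proposal is correct. The paper actually omits the proof entirely (the \qed\ appears at the end of the statement itself), treating all four parts as standard facts about $H\text{-}\mathsf{FdMod}$; your direct verification from the Hopf algebra axioms is exactly the routine check one would supply if pressed.
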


\subsubsection{Comodule algebras} See \cite{andruskiewitsch2007modcat} for more details here. A {\it left $H$-comodule algebra} is an algebra~$B$ over $\kk$ equipped with an algebra map 
\[
\delta : B \to H \otimes B, \quad b \mapsto b_{[-1]} \otimes b_{[0]} 
\]
making $B$ a left $H$-comodule. Here, the category 
\[
\cM:=B\text{-}\mathsf{FdMod}
\]
is a left $\cC$-module category via 
\[
\act: H\text{-}\mathsf{FdMod} \times B\text{-}\mathsf{FdMod} \to B\text{-}\mathsf{FdMod}, \quad \left((X, \cdot), (M, \ast)\right) \mapsto (X \otimes M,\;  \widetilde{\ast}),
\]
where $b \; \widetilde{\ast} \; (x \otimes m) := (b_{[-1]} \cdot x) \otimes (b_{[0]} \ast m)$ for $b \in B$, $x \in X$, $m \in M$. 

\medskip

For example, {\it left coideal subalgebras} of $H$ (which are left coideals of $H$ that are also subalgebras) are  left $H$-comodule algebras. 
Next, consider the  terms below for left $H$-comodule algebras $B$. 
\begin{itemize}[\footnotesize$\bullet$]
\item $B$ is {\it exact} if $\Bmod$ is an exact left $(\Hmod)$-module category. 

\smallskip

\item An {\it $H$-costable ideal} of $(B,\delta)$ is an ideal $I$ of $B$ such that $\delta(I) \subseteq H \otimes I$. 

\smallskip

\item $B$ is {\it $H$-simple} if it has no non-trivial $H$-costable ideal.

\smallskip

\item $B$ is {\it $H$-indecomposable} if there are no non-trivial 
$H$-costable ideals $I$ and $J$ where $B = I \oplus J$.
\end{itemize}
Observe that $H$-simplicity implies $H$-indecomposability.

\medskip

Next, we recall some features of $\cM = \Bmod$ that will be needed later.

\begin{lemma}
\cite[Propositions~1.18,~1.19,~1.20]{andruskiewitsch2007modcat} \cite[Theorem~6.1]{skyrabin2007proj} \label{lem:AM07}
Any indecomposable exact module category over $\Hmod$ is equivalent to $\Bmod$ as module categories, for some  $H$-indecomposable, exact $H$-comodule algebra $B$. Moreover, 
we have the following statements for the left $(\Hmod)$-module category $\Bmod$.
\begin{enumerate}[\upshape (a)]
\item If $B$ is a left coideal subalgebra of $H$, then $B$ is $H$-simple. 

\smallskip

\item If $B$ is an $H$-simple $H$-comodule algebra, then $\Bmod$ is exact.

\smallskip

\item If $B$ is an $H$-indecomposable $H$-comodule algebra, then $\Bmod$ is indecomposable.
\qed
\end{enumerate}
\end{lemma}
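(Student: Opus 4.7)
The plan is to specialize the general classification of exact module categories over finite tensor categories to the Hopf case $\cC = \Hmod$, drawing on the standard results of Andruskiewitsch--Mombelli and Skryabin. For the main representation statement, I would start with an indecomposable exact left $\cC$-module category $\cM$ and pick a progenerator $M \in \cM$, which exists because $\cM$ is finite with enough projectives by exactness. The comodule algebra $B$ is then constructed as a suitable algebra of $\kk$-linear endomorphisms of $M$, and the key point is that in the Hopf setting this algebra naturally carries a left $H$-\emph{coaction} rather than an $H$-action: one reads off $\delta : B \to H \otimes B$ by dualizing the $\Hmod$-action on $\cM$ through the Hopf pairing $H \otimes H^* \to \kk$ and exploiting rigidity of $\Hmod$. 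The functor $\cM \to \Bmod$ sending $N \mapsto \Hom_{\cM}(M,N)$ then furnishes the desired equivalence of $\cC$-module categories, with exactness of $\cM$ yielding exactness of $B$ via part~(b), and indecomposability of $\cM$ yielding $H$-indecomposability of $B$ via part~(c).

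For part~(a), let $B \subseteq H$ be a left coideal subalgebra and let $I$ be a nonzero $H$-costable ideal of $B$. The plan is to exploit the antipode identity $m_H (S_H \otimes \id_H) \Delta_H(h) = \varepsilon_H(h)\, 1_H$ inside the ambient Hopf algebra $H$: writing $\delta(b) = b_{[-1]} \otimes b_{[0]}$ for $b \in I$, with $b_{[0]} \in I$ by $H$-costability, I would combine this identity with the coideal property $\delta = \Delta_H|_B$ to extract an element $\varepsilon_H(b) \cdot 1_B$ lying in $I$. When $\varepsilon_H$ does not vanish identically on $I$, this forces $1 \in I$, hence $I = B$; the vanishing case is excluded by a standard argument using that $\varepsilon_H$ is a ring homomorphism and $B$ contains $1_H$.

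For part~(b), the plan is to invoke Skryabin's projectivity theorem, which implies that for an $H$-simple left $H$-comodule algebra $B$, the ``free-type'' objects of the form $H \otimes V$ with $B$-action induced from the coaction on $H$ are projective in $\Bmod$ for every $V \in \Vect$. Given a projective $P \in \Hmod$ and any $M \in \Bmod$, one writes $P$ as a direct summand of $H^{\oplus n}$ and realizes $P \act M = P \otimes M$ as a corresponding summand of $(H \otimes M)^{\oplus n}$ in $\Bmod$, concluding projectivity of $P \act M$. For part~(c), if $\Bmod = \cM_1 \oplus \cM_2$ as $\cC$-module categories, then decomposing the regular module $B \in \Bmod$ along the splitting produces central idempotents $e_1, e_2 \in B$ with $Be_i \in \cM_i$. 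The closure of each $\cM_i$ under the $\Hmod$-action translates precisely into $\delta(Be_i) \subseteq H \otimes Be_i$, so each $Be_i$ is an $H$-costable ideal of $B$; $H$-indecomposability of $B$ then forces one summand to vanish. The main obstacle is the representation statement, where constructing the comodule structure on $B$ cleanly requires careful use of rigidity and Hopf duality; parts~(a)--(c) are relatively formal once Skryabin's theorem and the dictionary between $\cC$-module-category decompositions and $H$-costable ideal decompositions are in place.
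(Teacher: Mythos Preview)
The paper does not prove this lemma; it is stated with a terminal \qed and attributed entirely to the cited works of Andruskiewitsch--Mombelli and Skryabin. Your outlines for the main classification statement and for parts~(b) and~(c) follow the standard arguments and are essentially correct.

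Your argument for part~(a), however, has a genuine gap. You propose to apply the antipode identity $S(b_{(1)})\, b_{(2)} = \varepsilon(b)\,1_H$ to $b \in I$, using that $b_{(2)} \in I$ by $H$-costability, and conclude $\varepsilon(b)\cdot 1_B \in I$. But this product is taken in $H$: the factor $S(b_{(1)})$ lies in $H$, not in $B$, and $I$ is only a two-sided ideal of $B$, so left-multiplying an element of $I$ by an arbitrary element of $H$ need not land back in $I$. The identity therefore only tells you $\varepsilon(b)\cdot 1 \in H$, which is vacuous. (Your treatment of the case $\varepsilon(I) = 0$ is fine: applying $\id \otimes \varepsilon$ to $\Delta(b) \in H \otimes I$ indeed forces $b = 0$.) A correct proof of~(a) genuinely needs more input. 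One standard route is to pass to $HI \subseteq H$, which is a nonzero left ideal \emph{and} left coideal of $H$; your antipode trick then \emph{does} apply inside $H$ to give $HI = H$, and finally Skryabin's theorem that $H$ is free as a right $B$-module lets you descend from $HI = H$ to $I = B$.
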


\begin{lemma}\cite[$\S$4.4]{shimizu2023relative}\label{lem:shirel-iHom}
Take modules $M,\, N\ \in \Bmod =: \cM$, with  $m\in M, \, n\in N$. Then,   the statements below hold for the module $X\in\Hmod =: \cC$.
\begin{enumerate}[\upshape (a)]
\item We have that $\iHom(M,N) = \Hom_{B}(H\act M,N)$ as a vector space.

\smallskip
    
\item The Hopf algebra $H$ acts on $\iHom(M,N)$ as $(h\cdot \xi)(h'\otimes m) = \xi(h'h\otimes m)$, for $\xi\in\iHom(M,N)$.

\smallskip
  
\item The map $\coev_{X,M}: X\rightarrow \iHom(M,X\act M)$ is given by 
$x \mapsto [ h\otimes m  \mapsto (h\cdot x)\otimes m ]$.

\smallskip
  
\item The map $\ev_{M,N}: \iHom(M,N)\otimes M\rightarrow N$ is given by
$\xi \otimes m \mapsto \xi(1_H\otimes m)$.

\smallskip
  
\item The isomorphism $X\otimes \iHom(M,N)\xrightarrow{\sim} \iHom(M,X\act N)$ is given by
\[ x\otimes \xi \mapsto [h\otimes m \mapsto (h_{(1)}\cdot x) \otimes \xi(h_{(2)}\otimes m)]. \]

\vspace{-.3in}

\qed

\end{enumerate}
\end{lemma}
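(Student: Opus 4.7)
The plan is to derive all five statements from the defining adjunction~\eqref{eq:internalHom} together with the observation that $H$, viewed as an object of $\cC$, is the free left $H$-module of rank one. For (a), I would evaluate the adjunction
\[ \Hom_{\cC}(X, \iHom(M, N)) \; \cong \; \Hom_B(X \act M, N) \]
at $X = H$ and combine it with the canonical linear isomorphism $\Hom_{\cC}(H, V) \cong V$, $\phi \mapsto \phi(1_H)$, valid for any $V \in \cC$ because $H$ is the free $H$-module on one generator. This identifies the underlying vector space of $\iHom(M, N)$ with $\Hom_B(H \act M, N)$. For (b), the left $H$-action on $\iHom(M, N)$ transports under this identification to the action induced by right multiplication on the representing object $H$: for $h \in H$, the map $r_h: H \to H$, $h' \mapsto h' h$, is right $H$-linear, and applying Yoneda in the first slot translates precomposition by $r_h$ into the formula $(h \cdot \xi)(h' \otimes m) = \xi(h' h \otimes m)$.

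Parts (c) and (d) will follow by tracing the unit and counit of the adjunction through (a). Specifically, $\coev_{X, M}$ corresponds under the bijection to $\id_{X \act M} \in \Hom_B(X \act M, X \act M)$; combining this with the isomorphism $\Hom_\cC(H, X) \cong X$ yields the $B$-linear map $h \otimes m \mapsto (h \cdot x) \otimes m$, as claimed. For $\ev_{M, N}$, I would trace $\id_{\iHom(M, N)}$ through the counit; under the identification of (a), the counit at $\xi \otimes m$ is simply evaluation at $1_H \otimes m$, giving $\ev_{M, N}(\xi \otimes m) = \xi(1_H \otimes m)$.

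For (e), the isomorphism $X \otimes \iHom(M, N) \xrightarrow{\sim} \iHom(M, X \act N)$ of~\eqref{eq:internalHom-2} is uniquely characterized by the universal property of $\iHom(M, X \act N)$. Thus I would verify that the candidate map
\[ \Psi: x \otimes \xi \; \longmapsto \; \bigl[h \otimes m \mapsto (h_{(1)} \cdot x) \otimes \xi(h_{(2)} \otimes m)\bigr] \]
is well-defined ($B$-linear in $h \otimes m$, using that $\delta: B \to H \otimes B$ is an algebra map together with coassociativity of $\Delta$), is $H$-linear on the outside, and satisfies the counit identity $\ev_{M, X \act N} \circ (\Psi \act \id_M) = \id_X \otimes \ev_{M, N}$; this last compatibility reduces, upon setting $h = 1_H$, to the Hopf algebra counit axiom $(\varepsilon \otimes \id) \Delta = \id$. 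The main obstacle will be the careful bookkeeping of left versus right actions and the interaction of $\delta$ on $B$ with $\Delta$ on $H$ inside the induced module $H \act M$; however, once (a) is in place, the remaining steps reduce to direct Sweedler-index manipulations.
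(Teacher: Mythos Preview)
Your proposal is correct. The paper itself gives no proof of this lemma: it is stated with a citation to \cite[\S4.4]{shimizu2023relative} and closed immediately with a \qed, so there is nothing to compare against at the level of argument. What you have written is precisely the standard derivation one expects, and it would serve well as a self-contained justification.

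A few small remarks on execution. In (e), your claim that the counit identity $\ev_{M,X\act N}\circ(\Psi\act\id_M)=\id_X\act\ev_{M,N}$ pins down $\Psi$ is exactly right, because the isomorphism~\eqref{eq:internalHom-2} is by construction the adjoint of $\id_X\act\ev_{M,N}$; you might say this explicitly rather than appealing only to ``the universal property''. The $B$-linearity check for $\Psi(x\otimes\xi)$ uses coassociativity of the coaction $\delta$ (not that $\delta$ is an algebra map), so when you carry it out be sure to invoke $(\Delta\otimes\id_B)\delta=(\id_H\otimes\delta)\delta$ at the key step. Finally, for (b), the phrase ``right $H$-linear'' for $r_h$ should read ``left $H$-linear'' (right multiplication by $h$ commutes with the left regular action), which is what makes $r_h$ a morphism in $\cC$; you use this correctly in spirit but the wording is backwards.
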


Next, we recall the description of the end object $\tEM = \int_{M\in\cM} \iHom(M,M)$ of the category $\cC$. 

\begin{proposition}\cite[$\S$4.2] {bortolussi2021character}\label{prop:E_M-Hopf}
The following statements hold for the left module category \linebreak $\cM = \Bmod$ over $\cC = \Hmod$.
\begin{enumerate}[\upshape (a)]
\item The end object $\tE_{\cM}$ is a subspace of $\Hom_\kk(H,B)$ equal to:
\[ E(H,B) =\{ \xi:H\rightarrow B ~|~ \xi(b_{[-1]}h) b_{[0]} = b \xi (h), \; \forall b\in B,\, h\in H \}. \]
\item The Hopf algebra $H$ acts on $\tEM$ via $(h\cdot \xi)(h') = \xi(h'h)$ for all $h,h'\in H$.

\smallskip

\item The  maps $\pi_{\cM}(M):\tEM \rightarrow \iHom(M,M)$ are given by 
$\xi \mapsto [h\otimes m \mapsto \xi(h)\cdot m] $.
\qed
\end{enumerate}
\end{proposition}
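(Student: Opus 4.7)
The plan is to verify directly that $E(H,B)$, together with the maps in~(c), satisfies the universal property of the end $\int_{M\in\cM}\iHom(M,M)$; parts~(a) and~(c) follow simultaneously, and part~(b) is then forced by universality. Throughout, we identify $\iHom(M,N) = \Hom_B(H\act M, N)$ via Lemma~\ref{lem:shirel-iHom}(a).

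We begin by defining, for each $\xi\in E(H,B)$ and $M\in\cM$, the map $\phi^M_\xi(h\otimes m) := \xi(h)\cdot m$. The key observation is that the defining relation $\xi(b_{[-1]}h)\,b_{[0]} = b\,\xi(h)$ of $E(H,B)$ is equivalent to $\phi^M_\xi$ being $B$-linear for all $M$: it is visibly sufficient, and specializing to $M=B$ and evaluating $B$-linearity of $\phi^B_\xi$ at $b\cdot(h\otimes 1_B) = b_{[-1]}h\otimes b_{[0]}$ recovers it. Hence $\pi_\cM(M)(\xi) := \phi^M_\xi$ is a well-defined element of $\iHom(M,M)$. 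Dinaturality is then immediate: for any $B$-linear $f:M\to N$, both $f\circ \phi^M_\xi$ and $\phi^N_\xi\circ(\id_H\act f)$ send $h\otimes m$ to $\xi(h)\cdot f(m)$.

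To establish universality, fix a dinatural transformation from the constant functor $K_Q$ to $\iHom(-,-)$ with components $\tau(M): Q\to\iHom(M,M)$, and define $\alpha(q): H\to B$ by $\alpha(q)(h) := \tau(B)(q)(h\otimes 1_B)$. Applying dinaturality at the $B$-linear endomorphism $R_b: B\to B$, $b'\mapsto b'b$ of the regular module yields $\tau(B)(q)(h\otimes b) = \alpha(q)(h)\cdot b$ for all $h,b$; combining this with the $B$-linearity of $\tau(B)(q)$ evaluated at $b\cdot(h\otimes 1_B)$ forces $\alpha(q)\in E(H,B)$. For arbitrary $M$ and $m\in M$, applying dinaturality at the $B$-linear cyclic map $f_m: B\to M$, $b\mapsto b\cdot m$, gives $\tau(M)(q)(h\otimes m) = \alpha(q)(h)\cdot m = \pi_\cM(M)(\alpha(q))(h\otimes m)$. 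Uniqueness of $\alpha$ follows because $\pi_\cM(B)$ is injective, as $\xi(h) = \phi^B_\xi(h\otimes 1_B)$ recovers $\xi$ from $\phi^B_\xi$. This proves (a) and (c). For part~(b), the $H$-action on $\iHom(M,M)$ from Lemma~\ref{lem:shirel-iHom}(b) is $(h\cdot\phi)(h'\otimes m) = \phi(h'h\otimes m)$, so $H$-linearity of $\pi_\cM(B)$ (automatic, since it is a morphism in $\cC$) evaluated at $h'\otimes 1_B$ forces $(h\cdot\xi)(h') = \xi(h'h)$; closure of $E(H,B)$ under this action is a short check.

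No serious obstacle arises: the entire argument is driven by the principle that the regular module $B$ is a cyclic generator of $\cM$, so that the universal property of the end can be reduced to conditions at $M=B$ via the two families of $B$-linear morphisms $R_b$ (explaining how $\tau(B)(q)$ extends from $h\otimes 1_B$ to $h\otimes b$) and $f_m$ (realizing each $M$ as a cyclic quotient of $B$); the proof is then routine bookkeeping with the coaction of $H$ on $B$ and with the intertwined $H$- and $B$-module structures.
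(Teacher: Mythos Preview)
The paper does not supply a proof here; the statement is taken on citation from \cite{bortolussi2021character}. Your direct verification of the universal property is correct and is essentially how one would prove this from scratch. The reduction to $M=B$ via the generators $R_b$ and $f_m$ is exactly the right mechanism.

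One point of logical order deserves tightening. In your treatment of part~(b) you write that $H$-linearity of $\pi_\cM(B)$ is ``automatic, since it is a morphism in $\cC$,'' and then use this to \emph{deduce} the $H$-action on $E(H,B)$. But until you have fixed an $H$-action on $E(H,B)$, you have only verified the universal property in $\mathsf{FdVec}$, not in $\cC$, so nothing about $\pi_\cM(B)$ is yet known to be $H$-linear. There are two clean ways to close this: either (i) \emph{define} the $H$-action by $(h\cdot\xi)(h')=\xi(h'h)$ at the outset, check closure of $E(H,B)$ under it, check each $\pi_\cM(M)$ is $H$-linear (immediate from Lemma~\ref{lem:shirel-iHom}(b)), and then verify that your factoring map $\alpha$ is $H$-linear whenever the $\tau(M)$ are (a one-line check using $\tau(B)$); or (ii) observe that the forgetful functor $\Hmod\to\mathsf{FdVec}$ creates limits, so your $\mathsf{FdVec}$-level computation already identifies the underlying space of the end, and the induced $H$-action is then read off from the embedding $\pi_\cM(B)$. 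Either route is short, but the argument as written blurs the two.
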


In the special case when $B=H$ and $\delta=\Delta$, we obtain the following consequence.

\begin{corollary}\label{cor:E_C-Hopf} 
We have the statements below for $\cM = \Hmod = \cC$.
\begin{enumerate}[\upshape (a)]
  \item The end object $\tE_{\cC}=E(H,H)$ is isomorphic $H$ via the map $\xi \mapsto \xi(1_H)$.

  \smallskip
  
  \item The Hopf algebra $H$ acts on $\tEC=H$ acts on via $h\cdot h' = h_{(1)}h'S(h_{(2)})$, for $h,h'\in H$.
  
  \smallskip
  
  \item The  maps $\pi_{\cC}(X):\tEC\rightarrow X\otimes X^*$ are given by $\pi_{\cC}(X)(h) = (h\cdot x_i) \otimes x^i$, for $h\in H$. 
\end{enumerate}
\end{corollary}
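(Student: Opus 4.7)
My approach is to derive each part of the corollary by specializing Proposition~\ref{prop:E_M-Hopf} to $(B,\delta) = (H,\Delta)$ and translating the resulting Hopf-algebraic data into the stated formulas. Under this specialization, the defining condition of $E(H,B)$ in Proposition~\ref{prop:E_M-Hopf}(a) becomes
\[ \xi(h_{(1)}h')\,h_{(2)} = h\,\xi(h'), \qquad \forall\,h,h'\in H, \]
and this identity is what drives all three parts.

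For part (a), I plan to first set $h'=1_H$ in the identity above to obtain $\xi(h_{(1)})\,h_{(2)} = h\,\xi(1_H)$. Expanding one more coproduct via $\Delta^2(h) = h_{(1)} \otimes h_{(2)} \otimes h_{(3)}$, tensoring both sides with $S(h_{(3)})$, multiplying, and applying the antipode identity $h_{(2)}S(h_{(3)}) = \varepsilon(h_{(2)})\,1_H$ should yield the explicit formula
\[ \xi(h) = h_{(1)}\,\xi(1_H)\,S(h_{(2)}), \]
so that $\xi$ is determined by $\xi(1_H)$. Conversely, I would define $\phi : H \to \Hom_\kk(H,H)$ by $\phi(a)(h) := h_{(1)}\,a\,S(h_{(2)})$ and verify $\phi(a) \in E(H,H)$ via a direct Sweedler computation using the antipode axiom; since $\phi(a)(1_H) = a$, the map $\phi$ furnishes a two-sided inverse to $\xi \mapsto \xi(1_H)$.

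Parts (b) and (c) then follow by transporting the module-theoretic data along this bijection. For (b), Proposition~\ref{prop:E_M-Hopf}(b) gives $(h\cdot\xi)(h') = \xi(h'h)$; evaluating at $h'=1_H$ and using the formula from (a) yields $(h\cdot\xi)(1_H) = \xi(h) = h_{(1)}\,\xi(1_H)\,S(h_{(2)})$, which is precisely the adjoint action. For (c), Proposition~\ref{prop:E_M-Hopf}(c) produces $\pi_{\cM}(X)(\xi) = [\,h'\otimes x \mapsto \xi(h')\cdot x\,] \in \iHom(X,X)$; I would then compose with the rigidity isomorphism $\iHom(X,X) \cong X \otimes X^*$ (available by \eqref{eq:internalHom-2} together with Lemma~\ref{lem:H-Mod}(d)), which sends $\zeta \in \Hom_H(H\otimes X, X)$ to $\zeta(1_H\otimes x_i)\otimes x^i$, and finally apply the iso of (a) to conclude $\pi_\cC(X)(h) = (h\cdot x_i)\otimes x^i$.

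The only mildly delicate step is the rigid identification in (c), where compatibility of the two descriptions of the internal Hom, namely $\Hom_H(H\otimes X, X)$ from Lemma~\ref{lem:shirel-iHom}(a) and $X \otimes X^*$ from rigidity, must be matched via the evaluation maps of Lemma~\ref{lem:shirel-iHom}(d) and Lemma~\ref{lem:H-Mod}(d). The rest is routine Hopf-algebraic bookkeeping, and I do not anticipate any substantial obstacle.
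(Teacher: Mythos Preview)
Your proposal is correct and follows essentially the same approach as the paper. For part (a), both you and the paper construct the inverse $a \mapsto [h \mapsto h_{(1)}\,a\,S(h_{(2)})]$ explicitly, verify it lands in $E(H,H)$, and check both compositions are identities; your derivation of $\xi(h) = h_{(1)}\,\xi(1_H)\,S(h_{(2)})$ from the defining relation is exactly the paper's verification that $\psi\phi = \id_{E(H,H)}$. For parts (b) and (c), the paper simply refers to \cite{bortolussi2021character} without details, whereas you supply the natural transport-of-structure arguments, which are correct.
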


\begin{proof}
The details can be derived from \cite[$\S$4.2] {bortolussi2021character}, especially \cite[Example~4.13]{bortolussi2021character}, but we provide some details about on part (a) for the reader's convenience.

The inverse of the map $\phi: E(H,H) \to H$, $\xi \mapsto \xi(1_H)$ is the map $\psi: H \to E(H,H)$ given by $\ell \mapsto  [h \mapsto h_{(1)} \ell S(h_{(2)})]$, for $\ell, h \in H$. Indeed, $\psi(\ell) \in E(H,H)$ since for all $h,h', \ell \in H$ we get that:

\vspace{-.1in}

{\small
\begin{align*}
[\psi(\ell)\bigl(h'_{(1)}h\bigr)]h'_{(2)} 
\; &= \; \bigl[\bigl(h'_{(1)}h\bigr)_{(1)}   \ell  S\bigl((h'_{(1)}h)_{(2)}\bigr)\bigr]h'_{(2)}\\[.2pc] 
\; &= \; h'_{(1)} h_{(1)}   \ell S(h_{(2)}) S(h'_{(2)}) h'_{(3)} 
= \; h'_{(1)} h_{(1)}   \ell S(h_{(2)}) \varepsilon(h'_{(2)})
\; = \; h' [\psi(\ell)(h)].
\end{align*}
}

\vspace{.05in}

\noindent Also, for all $\ell \in H$, we get that $\phi \psi(\ell) = \phi\bigl(h \mapsto h_{(1)} \ell S(h_{(2)})\bigr) = 1_{(1)} \ell S(1_{(2)}) = \ell$. So, $\phi \psi = \id_H$. On the other hand, for all $f \in E(H,H)$ and $h \in H$, we get that:

\vspace{-.1in}

{\small
\[
[\psi \phi(\xi)](h) \; = \;  h_{(1)} \xi(1_H) S(h_{(2)})
\overset{\xi \in E(H,H)}{=} 
 \xi\bigl(h_{(1)} 1_H\bigr) h_{(2)} S(h_{(3)})
 \; = \;   \xi\bigl(h_{(1)} 1_H\bigr) \varepsilon(h_{(2)})
  \; = \;   \xi(h).
\]
}

\vspace{.05in}

\noindent  So, $\psi \phi = \id_{E(H,H)}$.
\end{proof}


\subsection{Quasitriangular Hopf algebras and comodule algebras} \label{subsec:quasitriangular}
Retain Notation~\ref{not:Sec6}.

\subsubsection{Quasitriangular Hopf algebras}
See \cite[$\S$12.2]{radford2012hopf} for more details on the material here. We say that a Hopf algebra $H$ is {\it quasitriangular} if there exists an invertible element 
\[
R = R_i \otimes R^i \; \in H \otimes H \; \qquad \; \text{({\it $R$-matrix})}
\]
 satisfying the following properties:
 \[
\text{(i)} \; \; (\Delta \otimes \id_H) (R) = R_{13} R_{23},  
\quad \quad
\text{(ii)} \; \; (\id_H \otimes \Delta) (R) = R_{13} R_{12},
\quad \quad
\text{(iii)} \;  \; R \Delta(h) = \Delta^{\mathrm{op}}(h) R,  
\]
for  all $h \in H$. Here, $R_{cd}:=  R_i \; \text{($c$-th slot)} \otimes  R^i \; \text{($d$-th slot)}
\otimes  1_H \; \text{(other slots)}$,
e.g., $R_{13} =  R_i \otimes 1_H \otimes R^i$. 

\medskip

For an arbitrary invertible element $R:=R_i \otimes R^i \in H \otimes H$, we have that the natural isomorphism
\[
c_{X,Y} : X \otimes Y \to Y \otimes X, \quad
x \otimes y \mapsto  (R^i \cdot y) \otimes (R_i \cdot x)
\]
makes the category $\cC=\Hmod$ braided if and only if $R$ is an $R$-matrix for $H$.

\subsubsection{Factorizable Hopf algebras} \label{subsec:fact-Hopf} See \cite[Definition~2.1]{reshetikhin1988quantum} and \cite[$\S$12.4]{radford2012hopf}  for more details on the material here. We say that a  quasitriangular Hopf algebra $(H,R)$ is {\it factorizable} if 
\[
\theta_{(H,R)}: H^* \to H, \quad f \mapsto (f \otimes \id_H)(R^i R_j \otimes R_i R^j) = f(R^i R_j) R_i R^j
\]
is a vector space isomorphism. Here, $\theta_{(H,R)}$ is called the {\it Drinfeld map} of $(H,R)$. 

\smallskip

Note that the braided finite tensor category $\cC = H$-$\mathsf{FdMod}$ is nondegenerate if and only if $(H,R)$ is factorizable \cite[$\S$2.5]{shimizu2019recent}; see also \cite[$\S$7.4.6]{kerler2001non}.

\subsubsection{Quasitriangular comodule algebras} 
\label{sec:qtcomodalg} See \cite{kolb2020braided,laugwitz2023reflective} for more details on the material here. Assume that $(H,R)$ is a quasitriangular Hopf algebra. Let $B$ be a left 
$H$-comodule algebra with coaction $\delta$.
We say that $B$ is  {\emph{quasitriangular}} if it is equipped with an invertible element, 
\[
K:= \textstyle K_i \otimes K^i \; \in H \otimes B \; \qquad \; \text{({\it $K$-matrix})},
\]
satisfying the following properties:
\begin{equation*} \label{eq:K-axioms} \; \;
\text{(i)}\;  \; (\Delta \otimes \id_B) K = K_{23}R_{21} K_{13} R_{21}^{-1}, \quad \; \text{(ii)} \; \; (\id_H \otimes \delta) K = R_{21} K_{13} R_{12}, \quad \; \text{(iii)}\;  \; K \delta(b) = \delta(b) K, 
\end{equation*}
for all $b \in B$. Here, $K_{cd}:= K_i \; \text{($c$-th slot)}\; \otimes \; K^i \; \text{($d$-th slot)}
\otimes \; 1_H \; \text{(other slots)}.$

\medskip

For an arbitrary invertible element $K:=K_i \otimes K^i \in H \otimes B$, we have that the natural isomorphism
\begin{equation} \label{eq:braid-K}
e_{X,M}: X \otimes M \to X \otimes M, \quad
x \otimes m \mapsto \textstyle (K_i \cdot x) \otimes (K^i \ast m)
\end{equation}
yields a braiding on the module category $B\text{-}\mathsf{FdMod}$  if and only if $K$ is a $K$-matrix for $B$.

\begin{example} \label{ex:overVec}
\begin{enumerate}[\upshape (a)]
\item  Take a triangular Hopf algebra $(H, R)$, that is, $R_{21}=R^{-1}$. Then, any left $H$-comodule algebra $B$  is quasitriangular with $K$-matrix $K = 1_H \otimes 1_B$.

\smallskip

\item Continuing Example~\ref{ex:M=C,double} for the identity braided tensor functor, we get that $(B,\delta) = (H,\Delta)$ is a quasitriangular left $H$-comodule algebra with $K(x \otimes y) = (R^i R_j \cdot y) \otimes (R_i R^j \cdot x)$, for $x \in X$, $y \in Y$ with $X,Y \in H$-$\mathsf{FdMod}$. That is, $B = H$ is quasitriangular with $K = R_{21} R$. 
\end{enumerate}
\end{example}

Next, we discuss a special case of Example~\ref{ex:M=C,double} in the Hopf setting, when the braided tensor functor that is not necessarily the identity.

\begin{example} \label{ex:kG-FdMod}
Take a finite group $G$, with subgroup $G'$, and consider the braided tensor functor $\Bbbk G$-$\mathsf{FdMod} \to \Bbbk G'$-$\mathsf{FdMod}$ given by restriction. The braidings for the categories here are given by the flip map. This corresponds to a quasitriangular left $\Bbbk G$-comodule algebra structure on $\Bbbk G'$, where $\delta = \Delta_{\Bbbk G'}$ and $K = 1_{\Bbbk G} \otimes 1_{\Bbbk G'}$. Moreover, $\Bbbk G'$ is an exact and indecomposable left $\Bbbk G$-comodule algebra; see, e.g., \cite[Corollary 7.12.20]{etingof2015tensor}.
\end{example}


\subsection{Factorizable comodule algebras}
\label{subsec:nondegenerate}
Consider the following hypothesis for the material below.

\begin{hypothesis} \label{hyp:fact} 
Let $(H,R)$ be a finite-dimensional quasitriangular Hopf algebra, and let $(B,K)$ be a finite-dimensional quasitriangular left $H$-comodule algebra. Also, assume that $B$-$\mathsf{FdMod}$ is an indecomposable and exact left module category over $H$-$\mathsf{FdMod}$. (Compare to Hypothesis~\ref{hyp:Sec4}.)
\end{hypothesis}

Note that Hypothesis~\ref{hyp:fact} holds when the $H$-comodule algebra $B$ is $H$-simple by Lemma~\ref{lem:AM07}(b,c). Next, we introduce the main property of $H$-comodule algebras of interest here. 

\begin{definition} \label{def:thetaB}
We call $(B,K)$ a {\it factorizable $H$-comodule algebra} if the following map is an isomorphism of vector spaces:
\[ \theta_B := \theta_{(B,K)}: H^* \rightarrow E(H,B), \quad   f\mapsto [h\mapsto \langle f, S(h_{(1)})K_i h_{(2)}\rangle K^i ]. \]
\end{definition}

This notion is well-defined to the result below.

\begin{lemma} \label{lem:image-thetaB}
The image of the map $\theta_B$ above is in $E(H,B)$.
\end{lemma}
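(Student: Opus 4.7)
The plan is a direct Sweedler-index computation: I will verify that $\xi := \theta_B(f)$ satisfies the defining identity $\xi(b_{[-1]}h)\, b_{[0]} = b\,\xi(h)$ of $E(H,B)$, for all $f \in H^*$, $b \in B$, and $h \in H$. Since $\Delta$ is an algebra map and $S$ is an anti-algebra map, unfolding the definition of $\theta_B$ yields
\[
\xi(b_{[-1]}h)\, b_{[0]} \; = \; \langle f,\; S(h_{(1)})\, S(b_{[-1](1)})\, K_i\, b_{[-1](2)}\, h_{(2)} \rangle\, K^i\, b_{[0]}.
\]

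Next, I would apply coassociativity of the $H$-coaction, $(\Delta \otimes \id)\delta = (\id \otimes \delta)\delta$, to replace $b_{[-1](1)} \otimes b_{[-1](2)} \otimes b_{[0]}$ by $b_{[-1]} \otimes b_{[0][-1]} \otimes b_{[0][0]}$. This isolates the pattern $K_i\, b_{[0][-1]} \otimes K^i\, b_{[0][0]}$, which by axiom (iii) of a quasitriangular comodule algebra, $K\,\delta(b_{[0]}) = \delta(b_{[0]})\, K$, may be rewritten as $b_{[0][-1]}\, K_i \otimes b_{[0][0]}\, K^i$. The expression becomes
\[
\langle f,\; S(h_{(1)})\, S(b_{[-1]})\, b_{[0][-1]}\, K_i\, h_{(2)} \rangle\, b_{[0][0]}\, K^i.
\]

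Finally, I apply coassociativity in the reverse direction together with the antipode axiom $S(x_{(1)})\, x_{(2)} = \varepsilon(x)\, 1_H$ and the counit axiom for the coaction, $\varepsilon(b_{[-1]})\, b_{[0]} = b$, to collapse $S(b_{[-1]})\, b_{[0][-1]} \otimes b_{[0][0]}$ down to $1_H \otimes b$. Substituting this back reduces the left-hand side to $\langle f,\, S(h_{(1)})\, K_i\, h_{(2)} \rangle\, b\, K^i = b\,\xi(h)$, the desired right-hand side. The argument is entirely formal Sweedler-index bookkeeping; no genuine obstacle is anticipated. The only delicate point is matching the single copy of $K$ to the correct occurrence of $\delta(b_{[0]})$ after the first coassociativity step, where axiom (iii) becomes the pivotal tool.
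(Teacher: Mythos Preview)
Your argument is correct and matches the paper's proof essentially line for line: both unfold $\theta_B(f)(b_{[-1]}h)\,b_{[0]}$, use coassociativity of the coaction, apply axiom~(iii) of the quasitriangular comodule algebra to swap $K$ past $\delta(b_{[0]})$, and then collapse via the antipode and counit axioms. The only cosmetic difference is that the paper writes the iterated coaction as $b_{[-2]}\otimes b_{[-1]}\otimes b_{[0]}$ where you use $b_{[-1]}\otimes b_{[0][-1]}\otimes b_{[0][0]}$.
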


\begin{proof}
The result holds due to the calculation below, for $f \in H^*$, $h \in H$, and $b \in B$:
{\small
\begin{align*}
\bigl[\theta_B(f)(b_{[-1]} h)\bigr] b_{[0]} 
& \; = \; \bigl\langle f, \; S\bigl((b_{[-1]} h)_{(1)}\bigr) K_i (b_{[-1]}h)_{(2)} \bigr\rangle \; K^i  \, b_{[0]}\\[.2pc]
& \; = \; \bigl\langle f, \; S(h_{(1)}) S(b_{[-2]}) K_i b_{[-1]} h_{(2)} \bigr\rangle \;  K^i  \, b_{[0]}\\
& \; \overset{\S\textnormal{\ref{sec:qtcomodalg}}\text{(iii)}}{=} \; \bigl\langle f, \; S(h_{(1)}) S(b_{[-2]}) b_{[-1]} K_i  h_{(2)} \bigr\rangle \;  b_{[0]} \, K^i \\[.2pc]
& \; = \; \bigl\langle f, \; S(h_{(1)}) \varepsilon(b_{[-1]}) K_i  h_{(2)} \bigr\rangle  \; b_{[0]} \, K^i \\[.2pc]
& \; = \; \bigl\langle f, \; S(h_{(1)})  K_i  h_{(2)} \bigr\rangle \;  b  \, K^i \; = \; b \;  \bigl[\theta_B(f)(h)\bigr]. 
\end{align*}
}

\vspace{-.25in}

\end{proof}

 Next, we have a useful characterization of factorizability for comodule algebras.

\begin{lemma} \label{lem:theta}
We have that $\textnormal{dim}_\kk E(H,B) = \textnormal{dim}_\kk H$. As a consequence, $B$ is factorizable if and only if the linear map $\theta_B$ in Definition~\ref{def:thetaB} is injective. 
\end{lemma}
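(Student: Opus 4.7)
The second assertion follows at once from the first: $\theta_B$ is a $\Bbbk$-linear map between the finite-dimensional vector spaces $H^*$ and $E(H,B)$ of equal dimension, so injectivity, surjectivity, and bijectivity all coincide.

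For the main claim $\dim_\Bbbk E(H,B) = \dim_\Bbbk H$, my plan is to use Proposition~\ref{prop:E_M-Hopf} to identify $E(H,B)$ with the end object $\tE_{\cM} \in \cC$ for $\cM = B\text{-}\mathsf{FdMod}$, and then to reduce to the regular case. In the base case $B = H$ with $\delta = \Delta$, Corollary~\ref{cor:E_C-Hopf} already gives $\tE_{\cC} \cong H$ as vector spaces, hence $\dim_\Bbbk E(H,H) = \dim_\Bbbk H$. For general $B$ satisfying Hypothesis~\ref{hyp:fact}, I would embed $\tE_{\cM}$ into $\underline{\textnormal{Hom}}(B,B) \cong \Hom_B(H \otimes B, B)$ via $\pi_\cM(B)$, using Lemma~\ref{lem:shirel-iHom}(a) to describe the right-hand side. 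Here $H \otimes B$ carries the twisted left $B$-action $b \cdot (h \otimes b') = b_{[-1]} h \otimes b_{[0]} b'$ coming from the $\cC$-module structure on $\cM$. The map $\phi: H \otimes B \to H \otimes B$ sending $h \otimes b$ to $b_{[-1]} h \otimes b_{[0]}$ is a $B$-module isomorphism from the trivial action (on the second tensor factor) to this twisted action, with inverse $h \otimes b \mapsto S^{-1}(b_{[-1]}) h \otimes b_{[0]}$ by the antipode identity $b_{(2)} S^{-1}(b_{(1)}) = \varepsilon(b)$. Combining $\phi$ with the explicit description of $\tE_{\cM}$ inside $\underline{\textnormal{Hom}}(B,B)$ as those $B$-module maps of the form $h \otimes b' \mapsto \eta(h) b'$ with $\eta \in E(H,B)$ (equivalently, the right-$B$-linear ones), I expect a careful dimension count to yield $\dim_\Bbbk H$, matching the regular case.

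The main obstacle will be executing this dimension count cleanly for arbitrary $B$ satisfying Hypothesis~\ref{hyp:fact}, since the defining condition $\xi(b_{[-1]} h) b_{[0]} = b \xi(h)$ intertwines the left $B$-multiplication with the $H$-coaction on $B$ in a way that does not split off trivially. An alternative, more conceptual route would invoke the monadicity result of Proposition~\ref{prop:monad} together with the equivalence $\mathsf{Fun}(\cM,\cM) \simeq \cM^{\textnormal{op}} \boxtimes \cM$ from Proposition~\ref{prop:coend} applied with $\cA = \cB = \cM$: comparing the two descriptions of $\mathsf{Fun}(\cM,\cM)$ would relate $\dim_\Bbbk \tE_{\cM}$ to categorical invariants of $\cM$, from which the equality $\dim_\Bbbk E(H,B) = \dim_\Bbbk H$ should drop out after accounting for the $H$-action on both sides and specializing via Corollary~\ref{cor:E_C-Hopf}.
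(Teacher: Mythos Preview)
Your reduction of the second assertion to the first is correct. But your argument for $\dim_\Bbbk E(H,B) = \dim_\Bbbk H$ is not a proof: you sketch two routes and explicitly flag that neither is completed. The direct approach via $\pi_\cM(B)$ only embeds $E(H,B)$ into $\underline{\textnormal{Hom}}(B,B) \cong \Hom_\Bbbk(H,B)$, a space of dimension $(\dim_\Bbbk H)(\dim_\Bbbk B)$; cutting out the subspace defined by $\xi(b_{[-1]}h)b_{[0]} = b\,\xi(h)$ and showing it has dimension exactly $\dim_\Bbbk H$ is the whole problem, and your isomorphism $\phi$ does not address it. The monadicity route is in the right spirit but too vague: you would still need some numerical invariant of $\mathsf{Fun}(\cM,\cM)$ to extract the dimension of $\mathtt{E}_\cM$ from the equivalence $\mathsf{Fun}(\cM,\cM)\simeq \mathsf{Mod}\text{-}\mathtt{E}_\cM(\cC)$.

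The paper supplies exactly such an invariant: Frobenius--Perron dimension. The chain is
\[
\dim_\Bbbk E(H,B) \;=\; \FPdim_{\cC}(\mathtt{E}_\cM) \;=\; \FPdim(\cC) \;=\; \dim_\Bbbk H,
\]
where the first equality uses the identification $\mathtt{E}_\cM \cong E(H,B)$ from Proposition~\ref{prop:E_M-Hopf} together with the fact that the forgetful functor $\cC \to \mathsf{FdVec}$ preserves $\FPdim$, the middle equality is a result of Bortolussi--Mombelli valid precisely for exact indecomposable $\cM$, and the last is standard. Note that the exactness and indecomposability hypotheses on $\cM$ from Hypothesis~\ref{hyp:fact} enter essentially in that middle step; neither of your sketched approaches invokes them, which is a signal that a genuine ingredient is missing.
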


\begin{proof}
We employ Frobenius-Perron dimension here; see \cite{etingof2015tensor} for details. To start, note that $\text{dim}_\kk E(H,B) =  \text{FPdim}_{\mathsf{FdVec}} (E(H,B))$. Next, $\text{FPdim}_{\mathsf{FdVec}} (E(H,B)) = \text{FPdim}_{\cC} (E(H,B))$, for $\cC:= \Hmod$,  by \cite[Proposition~4.5.7]{etingof2015tensor} applied to the forgetful functor $\cC \to \mathsf{FdVec}$. By \cite[Theorem~4.8]{bortolussi2021character}, we get that $\text{FPdim}_{\cC} (E(H,B)) = \text{FPdim}_{\cC} (\tEM)$, for $\cM:= \Bmod$. Now apply \cite[Lemma~4.3(iv)]{bortolussi2021character} to get that $\text{FPdim}_{\cC} (\tEM) = \text{FPdim}(\cC)$, which is equal to $\text{dim}_\kk H$ (see, e.g., \cite[Example~6.1.9]{etingof2015tensor}). This proves the first statement. The last statement is clear.
\end{proof}

\begin{example} \label{ex:fact-reg}
\begin{enumerate}[\upshape (a)]
\item Take $(H,R)$ triangular as in Example~\ref{ex:overVec}(a). Then, for a  quasitriangular left $H$-comodule algebra $(B,K= 1_H \otimes 1_B)$, we have that $[\theta_B(f)](h) \overset{\eqref{eq:dualHopf}}{=} \varepsilon_H(h) \, \varepsilon_{H^*}(f) \, 1_B$, for $h \in H$, $f \in H^*$. Now $(B,K= 1_H \otimes 1_B)$ is factorizable if and only if $\theta_B$ is injective [Lemma~\ref{lem:theta}], and this happens precisely when  $H = \kk$.

\medskip

\item Take $B=H$ with $K=R_{21}R$ as in  Example~\ref{ex:overVec}(b). Here, $B$ is a left coideal subalgebra of $H$, so it is $H$-simple; hence, Hypothesis~\ref{hyp:fact} holds [Lemma~\ref{lem:AM07}]. Using the isomorphism $E(H,H)\cong H$ from Corollary~\ref{cor:E_C-Hopf}(a), the map $\theta_{(H, R_{21}R)}$ is given by
\[
\theta_{(H, R_{21}R)}: H^* \rightarrow H, \quad f \mapsto \langle f, S(1_{(1)})K_i 1_{(2)}\rangle K^i = \langle f,K_i\rangle K^i = \langle f, R^i R_j \rangle R_i R^j.
\]
This is equal to the map $\theta_{(H,R)}$ from $\S$\ref{subsec:fact-Hopf}. So, the quasitriangular left $H$-comodule algebra $(H, R_{21}R)$ is factorizable if and only if the quasitriangular Hopf algebra $(H,R)$ is factorizable.
\end{enumerate}
\end{example}

Now, the main result of this section is the following.

\begin{theorem}\label{thm:nondeg-comodules}
  The braided left $(\Hmod)$-module category $\Bmod$ is nondegenerate if and only if $(B,K)$ is a factorizable left $H$-comodule algebra.
\end{theorem}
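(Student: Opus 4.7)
The plan is to translate the abstract condition of Definition~\ref{def:Mnondeg} into a concrete linear condition in the Hopf setting and match it against factorizability of $B$. Using Corollary~\ref{cor:E_C-Hopf} together with the rigidity of $H\text{-}\mathsf{FdMod}$, identify $\tE_\cC^{*} \cong H^{*}$; using Proposition~\ref{prop:E_M-Hopf}, identify $\tE_\cM \cong E(H,B)$. Under these identifications $\theta_\cM$ becomes a linear map $H^{*} \to E(H,B)$, and I will prove
\[
\theta_\cM \;=\; \theta_B \circ S^{*}, \qquad\text{where}\qquad S^{*}\colon H^{*}\to H^{*},\; f\mapsto f\circ S.
\]
Since the antipode of a finite-dimensional Hopf algebra is bijective, $S^{*}$ is a linear automorphism, so $\theta_\cM$ is bijective if and only if $\theta_B$ is. Because $\theta_\cM$ is a morphism in $\cC=H\text{-}\mathsf{FdMod}$, being an isomorphism in $\cC$ is equivalent to being bijective, and by Lemma~\ref{lem:theta} bijectivity of $\theta_B$ is equivalent to factorizability of $B$. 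Chaining these equivalences yields the theorem.

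The technical core is an explicit formula for the universal copairing. Viewing $\omega_\cM(1)\in\tE_\cC\otimes\tE_\cM\cong H\otimes E(H,B)$ as a function $k\mapsto\omega_\cM(k)\in H\otimes B$ (via evaluation on the second slot), I will establish
\[
\omega_\cM(k) \;=\; S(k_{(2)})\,S(K_j)\,S^{2}(k_{(1)}) \otimes K^{j}.
\]
This is obtained by tracing through the four-step commutative square of Definition~\ref{def:omegaM}: starting from $\coev_X^L\otimes \textnormal{coev}_{\one,M}$, apply the iso \eqref{eq:internalHom-2} using the explicit Lemma~\ref{lem:shirel-iHom}(e), post-compose the internal-Hom slot with $e_{X^{*},M}$ using the $K$-matrix formula \eqref{eq:braid-K}, and invert \eqref{eq:internalHom-2}. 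The inversion is the delicate step: the intermediate function $h\otimes m \mapsto (K_j h\cdot x^{i})\otimes(K^{j}\ast m)$ is rewritten using the antipode identity $K_j h = h_{(1)}\,(S(h_{(2)}) K_j h_{(3)})$ so that the initial $h_{(1)}$ matches the form of Lemma~\ref{lem:shirel-iHom}(e); then expanding $(S(h_{(2)}) K_j h_{(3)})\cdot x^{i}$ in the dual basis $\{x^{l}\}$ via Lemma~\ref{lem:H-Mod}(c) transfers the $H$-action from $X^{*}$ back to $X$ and produces the $S^{2}$.

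Once the formula is established, a direct computation using that $S$ is an anti-algebra homomorphism gives
\[
\theta_\cM(f)(k) \;=\; f\bigl(S(k_{(2)})\,S(K_j)\,S^{2}(k_{(1)})\bigr)\,K^{j} \;=\; (f\circ S)\bigl(S(k_{(1)})\,K_j\,k_{(2)}\bigr)\,K^{j} \;=\; \theta_B(f\circ S)(k),
\]
the middle equality being $S\bigl(S(k_{(1)}) K_j k_{(2)}\bigr) = S(k_{(2)}) S(K_j) S^{2}(k_{(1)})$. This yields $\theta_\cM=\theta_B\circ S^{*}$, and the theorem follows. The main obstacle is the Sweedler bookkeeping in the inversion of \eqref{eq:internalHom-2}: tracking the iterated coproduct $k_{(1)}\otimes k_{(2)}\otimes k_{(3)}$ and the antipode twists on the dual object $X^{*}$ is what produces the unexpected factor $S^{2}$, and ultimately reveals that $\theta_\cM$ differs from $\theta_B$ by composition with $S^{*}$ rather than being equal to it.
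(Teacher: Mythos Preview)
Your proof is correct and follows essentially the same route as the paper: compute $\omega_{\cM}$ explicitly using the internal-Hom description of Lemma~\ref{lem:shirel-iHom} and Proposition~\ref{prop:E_M-Hopf}, deduce that $\theta_{\cM}=\theta_B\circ S_{H^*}$, and conclude via bijectivity of the antipode. The only cosmetic difference is that the paper first posits the formula for $\omega_B$ and verifies the defining diagram commutes (Proposition~\ref{prop:omegaB}), whereas you derive the formula by chasing one side of the square and inverting \eqref{eq:internalHom-2}; your formula $\omega_{\cM}(k)=S(k_{(2)})S(K_j)S^2(k_{(1)})\otimes K^j$ is exactly the paper's $h_i\otimes\bigl[k\mapsto\langle h^i,S(S(k_{(1)})K_j k_{(2)})\rangle K^j\bigr]$ after contracting the dual basis.
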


In order to prove this result, we need an explicit description of the copairing $\omega_{\cM}: \one \rightarrow \tEC\otimes \tEM$  from Definition~\ref{def:omegaM}, for $\cC = \Hmod$ and $\cM = \Bmod$ here. 

\begin{proposition} \label{prop:omegaB}
Recall Lemma~\ref{lem:H-Mod}(b), Proposition~\ref{prop:E_M-Hopf}(a) and Corollary~\ref{cor:E_C-Hopf}(b).   Then, the copairing  $\omega_B : \kk_{\varepsilon} \rightarrow H \otimes E(H,B)$ given by 
  \[ 1 \mapsto h_i \otimes \big[h \mapsto \bigl< h^i, S\left( S(h_{(1)})K_j h_{(2)}  \right) \bigr> K^j \big] 
  \]
  is the copairing $\omega_{B\text{-}\mathsf{FdMod}}$ from Definition~\ref{def:omegaM}.
\end{proposition}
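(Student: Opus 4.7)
The plan is to invoke the universal property from Definition~\ref{def:omegaM}: $\omega_\cM$ is uniquely characterized by the requirement that $(\pi_\cC(X) \otimes \pi_\cM(M)) \circ \omega_\cM$ equals the zigzag map around the pentagon, for every $X \in \cC$ and $M \in \cM$. So it suffices to verify this identity for the candidate $\omega_B$ in the statement, using the explicit Hopf-algebraic descriptions already collected: the coevaluations from Lemma~\ref{lem:H-Mod}(d) and Lemma~\ref{lem:shirel-iHom}(c), the braiding $e_{X^*,M}$ from \eqref{eq:braid-K}, the rigidity isomorphism \eqref{eq:internalHom-2} from Lemma~\ref{lem:shirel-iHom}(e), and the end projections $\pi_\cC(X)$, $\pi_\cM(M)$ from Corollary~\ref{cor:E_C-Hopf}(c) and Proposition~\ref{prop:E_M-Hopf}(c).

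To avoid inverting the rigidity isomorphism (the final upward arrow of the pentagon), I would apply the forward map $\Psi \colon X^* \otimes \iHom(M,M) \xrightarrow{\sim} \iHom(M, X^* \act M)$ of Lemma~\ref{lem:shirel-iHom}(e) to both sides of the desired identity, and then evaluate the resulting $\iHom(M, X^* \act M)$-valued factor at an arbitrary $(h,m) \in H \otimes M$. Unpacking produces two explicit elements of $X \otimes X^* \otimes M$: the pentagon side equals $\sum_a x_a \otimes (K_j h \cdot x^a) \otimes (K^j \ast m)$, while the candidate side equals $\sum_{i,a} (h_i \cdot x_a) \otimes (h_{(1)} \cdot x^a) \otimes \langle h^i,\, S(S(h_{(2)}) K_j h_{(3)})\rangle\,(K^j \ast m)$, written in triple-Sweedler notation.

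Applying the dual basis identity \eqref{eq:dualbasis} in $H$ to collapse the sum over $i$ rewrites the candidate side as $\sum_a (S(S(h_{(2)}) K_j h_{(3)}) \cdot x_a) \otimes (h_{(1)} \cdot x^a) \otimes (K^j \ast m)$. Matching with the pentagon side then reduces to the vector-space identity
$\sum_a (u \cdot x_a) \otimes (v \cdot x^a) = \sum_a x_a \otimes (v S^{-1}(u) \cdot x^a)$
in $X \otimes X^*$ for arbitrary $u,v \in H$, which follows from the dual basis identity \eqref{eq:dualbasis} in $X$ together with the antipode-duality formula in Lemma~\ref{lem:H-Mod}(c). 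Specializing to $u = S(S(h_{(2)}) K_j h_{(3)})$ and $v = h_{(1)}$ gives $v S^{-1}(u) = h_{(1)} S(h_{(2)}) K_j h_{(3)}$, which simplifies to $K_j h$ via the standard triple-coproduct identity $\sum h_{(1)} S(h_{(2)}) a h_{(3)} = a h$ (derived from coassociativity and the antipode axiom $\sum h_{(1)} S(h_{(2)}) = \varepsilon(h)$ applied inside the first factor of the triple coproduct). This matches the two sides and completes the verification. The main technical point is the final antipode manipulation inside the triple coproduct; otherwise the argument is routine bookkeeping across the two distinct dual bases living in $H$ and in $X$.
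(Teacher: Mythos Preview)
Your diagram-commutativity computation is correct and essentially matches the paper's: both trace the two routes of the pentagon and match them via the same antipode simplification $h_{(1)} S(h_{(2)}) K_j h_{(3)} = K_j h$.

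There is, however, one gap in the logical frame. The universal property in Definition~\ref{def:omegaM} characterizes $\omega_{\cM}$ uniquely among morphisms \emph{in $\cC$}, not among arbitrary $\kk$-linear maps. So before invoking it you must know that your candidate $\omega_B$ is a morphism in $\cC = H\text{-}\mathsf{FdMod}$, i.e.\ that it is $H$-linear out of $\kk_{\varepsilon}$ (and, strictly, that its image lands in $E(H,B)$ rather than merely in $\Hom_\kk(H,B)$). The paper carries out both checks explicitly: the $H$-linearity is a half-page Sweedler computation showing $h \cdot \omega_B(1) = \varepsilon(h)\,\omega_B(1)$, and the image claim is reduced to Lemma~\ref{lem:image-thetaB}. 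Your argument can be repaired without redoing these by instead observing that the forgetful functor $H\text{-}\mathsf{FdMod} \to \mathsf{FdVec}$ has a left adjoint and therefore preserves the end, so the projections $\pi_{\cC}(X) \otimes \pi_{\cM}(M)$ remain jointly monic as $\kk$-linear maps; your verification of the pentagon at the level of $\mathsf{FdVec}$ then forces $\omega_B = \omega_{\cM}$ as linear maps, and $H$-linearity of $\omega_B$ follows a posteriori from that of $\omega_{\cM}$. Either patch is short, but some justification along these lines should be made explicit.
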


\begin{proof}
The image of the map $\omega_B$ lands in $H \otimes E(H,B)$ with an argument similar to the proof of Lemma~\ref{lem:image-thetaB}. Now it remains to show that, first, $\omega_B$ is a morphism in $\Hmod$, and second, that with our choice of $\omega_B$, the diagram in Definition~\ref{def:omegaM} commutes. Then, as there is a unique morphism in $\Hmod$ from $\kk_{\varepsilon}$ to $H\otimes E(H,B)$ such that diagram commutes, we will be done.

We start by checking that $\omega_B$ is a morphism in $\Hmod$. We need to show that:
\[ \omega_B(h\cdot 1) = h\cdot \omega_B(1). \]
The left-hand-side is $\varepsilon(h)\omega_B(1)$ [Lemma~\ref{lem:H-Mod}(b)]. The right-hand-side is:
{\small
\begin{align*}
  h\cdot\omega_B(1)
  & = \; h\cdot \bigl(h_i \otimes \big[h' \mapsto \bigl< h^i, S\left( S(h'_{(1)})K_j h'_{(2)}  \right) \bigr> K^j \big] \bigr) \\
  & \overset{\text{Lem.~\ref{lem:H-Mod}(a)}}{=} \;  \left(h_{(1)}\cdot h_i\right) \otimes \left(h_{(2)}\cdot \big[h' \mapsto \bigl< h^i, S\left( S(h'_{(1)})K_j h'_{(2)}  \right) \bigr> K^j \big]\right) \\
  & \overset{\text{Cor.~\ref{cor:E_C-Hopf}(b)}}{=}  \; h_{(1)}h_i S(h_{(2)}) \otimes \left( h_{(3)}\cdot \big[h' \mapsto \bigl< h^i, S\left( S(h'_{(1)})K_j h'_{(2)}  \right) \bigr> K^j \big] \right) \\
  & \overset{\text{Prop.~\ref{prop:E_M-Hopf}(b)}}{=} \; h_{(1)}h_i S(h_{(2)}) \otimes \big[h' \mapsto \bigl< h^i, S\bigl( S((h'h_{(3)})_{(1)})K_j (h'h_{(3)})_{(2)}  \bigr) \bigr> K^j \big] \\
  & = \; h_{(1)}h_i S(h_{(2)}) \otimes \big[h' \mapsto \bigl< h^i, S\bigl( S(h'_{(1)}h_{(3)}) K_j h'_{(2)} h_{(4)} \bigr) \bigr> K^j \big] \\
  & \overset{{\eqref{eq:hi-hi}}}{=} \; h_i  \otimes \big[h' \mapsto \bigl< h^i, h_{(1)} S\bigl( S(h'_{(1)}h_{(3)}) K_j h'_{(2)} h_{(4)} \bigr) S(h_{(2)}) \bigr> K^j \big]  \\[.2pc]
  & = \;  h_i  \otimes \big[h' \mapsto \bigl< h^i, h_{(1)} S\bigl( S(h_{(3)}) S(h'_{(1)}) K_j h'_{(2)} h_{(4)} \bigr) S(h_{(2)}) \bigr> K^j \big] \\[.2pc]
  & = \; h_i  \otimes \big[h' \mapsto \bigl< h^i, h_{(1)} S(h_{(4)}) S\bigl(S(h'_{(1)}) K_j h'_{(2)}\bigr)  S^2(h_{(3)}) S(h_{(2)}) \bigr> K^j \big] \\[.2pc]
  & = \; h_i  \otimes \big[h' \mapsto \bigl< h^i, h_{(1)} S(h_{(3)}) S\bigl(S(h'_{(1)}) K_j h'_{(2)}\bigr) \varepsilon(h_{(2)}) \bigr> K^j \big] \\[.2pc]
  & = \; h_i  \otimes \big[h' \mapsto \bigl< h^i, h_{(1)} S(h_{(2)}) S\bigl(S(h'_{(1)}) K_j h'_{(2)}\bigr)   \bigr> K^j \big] \\[.2pc]
  & = \; h_i  \otimes \big[h' \mapsto \bigl< h^i, \varepsilon(h) S\bigl(S(h'_{(1)}) K_j h'_{(2)}\bigr)   \bigr> K^j \big] \\[.2pc]
  & = \;  \varepsilon(h) \; \omega_B(1).
\end{align*}
}

\noindent Thus, $\omega_B$ is a morphism in $\Hmod$. 

Next, to prove that $\omega_B = \omega_{B\text{-}\mathsf{FdMod}}$, we need to check that the following diagram commutes:
{\small
\[
\hspace{-.02in}
\xymatrix@C=2pc@R=1pc{ 
\kk_{\varepsilon}
\ar[r]^(.45){\omega_B}
\ar[d]_(.4){\coev_X\; \otimes \; \textnormal{coev}_{\kk_{\varepsilon}, M}}
& 
\mathtt{E}_{\cC} \otimes \mathtt{E}_{\cM}
\ar[r]^(.38){\pi_{\cC}(X)\;  \otimes \; \pi_{\cM}(M)}
& 
X \otimes X^* \otimes \; \Hom_B(H\act M,M)
\ar[d]_{\sim}^{\eqref{eq:internalHom-2}}
\\
X \otimes X^* \otimes \; \Hom_B(H\act M,M)
\ar@<-2pt>[r]^{\sim}_{\eqref{eq:internalHom-2}}
&
X  \otimes \; \Hom_B(H\act M,X^*\act M)
\ar@<-2pt>[r]^{\sim}_{\id \; \otimes \; \iHom(\id, e_{X^{\scalebox{.6}{$*$}},M})}
& 
X  \otimes \; \Hom_B(H\act M,X^*\act M).
}
\]
}
Going along the top-right route, we get:
{\small
\begin{align*}
  1\;\; 
  \xmapsto{\qquad\;\;\;\; \omega_B \qquad \;\;\;\;}
  & \; h_i \otimes \big[h \mapsto \bigl< h^i, S\left( S(h_{(1)})K_j h_{(2)}  \right) \bigr> K^j \big] 
  \\
  {}^{[\text{Cor.~\ref{cor:E_C-Hopf}(c)}]} 
  \xmapsto{\qquad\pi_{\cC}(X) \; \otimes \; \id_{\mathtt{E}_{\cM}} \qquad}
  & \; (h_i\cdot x_k) \otimes x^k \otimes \big[h \mapsto \bigl< h^i, S\left( S(h_{(1)})K_j h_{(2)}  \right) \bigr> K^j \big] 
  \\
  {}^{\text{[Prop.~\ref{prop:E_M-Hopf}(c)]}} 
  \xmapsto{\id_{X\otimes X^* }\;  \otimes \;  \pi_{\cM}(M)}
  & \; (h_i\cdot x_k) \otimes x^k  \otimes \big[h\otimes m \mapsto \bigl< h^i, S\left( S(h_{(1)})K_j h_{(2)}  \right) \bigr> (K^j\cdot m) \big] 
  \\
  {}^{\text{[Lem.~\ref{lem:shirel-iHom}(e)]}} 
  \xmapsto{\; \qquad \eqref{eq:internalHom-2} \qquad\;}
  & \; (h_i\cdot x_k) \otimes \big[h\otimes m \mapsto (h_{(1)}\cdot x^k) \otimes \bigl< h^i, S\left( S(h_{(2)})K_j h_{(3)}  \right) \bigr> (K^j\cdot m) \big] .
\end{align*}
}

\noindent Note that the output above is equal to:
{\small
\begin{align*}
  & (h_i\cdot x_k) \otimes \big[h\otimes m \mapsto (h_{(1)}\cdot x^k) \otimes \bigl< h^i, S\left( S(h_{(2)})K_j h_{(3)}  \right) \bigr> (K^j\cdot m) \big] 
  \\
  &\overset{\eqref{eq:xi-xi}}{=} \;
    x_k \otimes \big[h\otimes m \mapsto h_{(1)}\cdot\langle x^k, h_i \cdot (-)\rangle \otimes \bigl< h^i, S\left( S(h_{(2)})K_j h_{(3)}  \right) \bigr> (K^j\cdot m) \big]
  \\
  &\overset{\text{Lem.~\ref{lem:H-Mod}(c)}}{=}
    x_k \otimes \big[h\otimes m \mapsto \langle x^k, h_i S(h_{(1)}) \cdot (-) \rangle \otimes \bigl< h^i, S\left( S(h_{(2)})K_j h_{(3)}  \right) \bigr> (K^j\cdot m) \big]
  \\
  &\overset{\eqref{eq:dualbasis}}{=}\;
   x_k \otimes \big[h\otimes m \mapsto \langle x^k, S\left( S(h_{(2)})K_j h_{(3)} \right) S(h_{(1)}) \cdot (-) \rangle \otimes (K^j\cdot m) \big]
  \\[.2pc]
  &= \;
   x_k \otimes \big[h\otimes m \mapsto \langle x^k, S\left(h_{(1)}S(h_{(2)}) K_j h_{(3)} \right) \cdot (-) \rangle \otimes (K^j\cdot m) \big]
  \\[.2pc]
  &= \;
   x_k \otimes \big[h\otimes m \mapsto \langle x^k, S\left( \varepsilon(h_{(1)}) K_j h_{(2)} \right) \cdot (-) \rangle \otimes (K^j\cdot m) \big]
  \\[.2pc]
  &= \;
   x_k \otimes \big[h\otimes m \mapsto \langle x^k, S\left(  K_j h \right) \cdot (-) \rangle \otimes (K^j\cdot m) \big]
  \\
  & \overset{\text{Lem.~\ref{lem:H-Mod}(c)}}{=} \;
   x_k \otimes \big[h\otimes m \mapsto (K_j h \cdot x^k) \otimes (K^j\cdot m) \big].
\end{align*}
}

Next, we calculate the result obtained by going along the left-bottom route of the diagram, starting by using Lemmas~\ref{lem:H-Mod}(d) and~\ref{lem:shirel-iHom}(c):
{\small
\begin{align*}
  1
  \xmapsto{\coev_X \; \otimes  \; \coev_{\kk_{\varepsilon}, M}}
  & \; x_k \otimes x^k \otimes \big[ h\otimes m \mapsto (h\cdot 1)m =\varepsilon(h)m \big] 
  \\
   {}^{[\text{Lem.~\ref{lem:shirel-iHom}(e)}]}  \xmapsto{\qquad\, \eqref{eq:internalHom-2} \,\qquad}
  & \; x_k \otimes \big[ h\otimes m \mapsto (h_{(1)}\cdot x^k) \otimes \varepsilon(h_{(2)})m = (h\cdot x^k)\otimes m \big]
  \\
   {}^{[\text{Lem.~\ref{lem:shirel-iHom}(a)}, \eqref{eq:braid-K}]} \xmapsto{\id \, \otimes \, \iHom(\id, e_{X^{\scalebox{.6}{$*$}},M}) \;} 
  & \; x_k \otimes \big[ h\otimes m \mapsto (K_j\cdot(h\cdot x^k))\otimes (K^j\cdot m)  = (K_j h \cdot x^k) \otimes (K^j\cdot m)  \big].
\end{align*}
}

\noindent As the results obtained by going along the top-right route and the left-bottom route are the same, the diagram commutes. Hence, the proof is complete.
\end{proof}

With the description of $\omega_{\cM}$ in hand, we can now describe the map $\theta_{\cM}:\tEC^*\rightarrow\tEM$ from  Definition~\ref{def:Mnondeg}, for $\cC = \Hmod$ and $\cM = \Bmod$ here. 

\begin{proposition} \label{prop:thetaBmod}
 The map $\theta_{B\text{-}\mathsf{FdMod}}: H^* \rightarrow E(H,B)$ is given by
  \[ f \mapsto \bigl[ h \mapsto   \bigl< f, S
\left( S(h_{(1)}) K_i h_{(2)}\right) \bigr> K^j \bigr] .\]
\end{proposition}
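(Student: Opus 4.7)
The plan is to compute $\theta_{B\text{-}\mathsf{FdMod}}$ directly from the definition of $\theta_{\cM}$ in Definition~\ref{def:Mnondeg}, substituting in the Hopf-theoretic descriptions of the relevant end objects and of the universal copairing.

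First, I would identify the data. By Corollary~\ref{cor:E_C-Hopf}(a), we have $\mathtt{E}_{\cC} \cong H$ (with $H$-action by the adjoint action), so $\mathtt{E}_{\cC}^* \cong H^*$ as an object of $\cC$; and by Proposition~\ref{prop:E_M-Hopf}(a), $\mathtt{E}_{\cM} \cong E(H,B)$. Under these identifications, the evaluation $\ev^L_{\mathtt{E}_{\cC}}: \mathtt{E}_{\cC}^* \otimes \mathtt{E}_{\cC} \to \kk_{\varepsilon}$ is the usual pairing $f \otimes h \mapsto \langle f, h\rangle$ by Lemma~\ref{lem:H-Mod}(d), and by Proposition~\ref{prop:omegaB}, the copairing $\omega_{\cM}$ is the map $\omega_B: \kk_{\varepsilon} \to H \otimes E(H,B)$ sending
\[
1 \longmapsto h_i \otimes \bigl[h \mapsto \langle h^i, S(S(h_{(1)})K_j h_{(2)})\rangle K^j\bigr].
\]

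Next, I would apply these to a generic $f \in H^*$. Unwinding Definition~\ref{def:Mnondeg},
\[
\theta_{\cM}(f) = (\ev^L_H \otimes \id_{E(H,B)}) \circ (\id_{H^*} \otimes \omega_B)(f) = \langle f, h_i\rangle \cdot \bigl[h \mapsto \langle h^i, S(S(h_{(1)})K_j h_{(2)})\rangle K^j\bigr].
\]
Pulling the scalar $\langle f, h_i\rangle$ inside the bracket and contracting with $\langle h^i, -\rangle$ via the dual basis identity \eqref{eq:dualbasis} (in the form $\langle h^i, y\rangle\langle f, h_i\rangle = \langle f, y\rangle$) gives
\[
\theta_{\cM}(f) = \bigl[h \mapsto \langle f, S(S(h_{(1)})K_j h_{(2)})\rangle K^j\bigr],
\]
which is precisely the stated formula.

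The only routine verification needed is that the resulting map indeed lands in $E(H,B)$, but this has already been carried out in Lemma~\ref{lem:image-thetaB} for $\theta_B$, and the formula for $\theta_{\cM}(f)$ coincides with $\theta_B(f)$. No real obstacle arises, since the computation is a chain of identifications together with one application of the dual basis identity; the substantive work was done in setting up $\omega_B$ in Proposition~\ref{prop:omegaB}.
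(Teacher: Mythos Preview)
Your proposal is correct and follows essentially the same approach as the paper: identify $\mathtt{E}_{\cC}^*\cong H^*$ and $\mathtt{E}_{\cM}\cong E(H,B)$, substitute the explicit $\omega_B$ from Proposition~\ref{prop:omegaB}, and contract via the dual basis identity~\eqref{eq:dualbasis}. One small imprecision: your final remark that ``the formula for $\theta_{\cM}(f)$ coincides with $\theta_B(f)$'' is not quite right---they differ by an application of $S$, i.e.\ $\theta_{\cM}(f)=\theta_B(f\circ S)$---but this does not affect the argument, since the target is $E(H,B)$ by construction (or, if you prefer, Lemma~\ref{lem:image-thetaB} applied to $f\circ S$).
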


\begin{proof}
 Recall from Corollary~\ref{cor:E_C-Hopf}(b) that $\tEC^* \cong H^*$, and from Proposition~\ref{prop:E_M-Hopf}(a) that $\tEM \cong E(H,B)$. Then, we compute, for all $f \in H^*$ that
 {\small
 \begin{align*}
 \theta_{B\text{-}\mathsf{FdMod}}(f) 
 & \; \overset{\text{Def.~\ref{def:Mnondeg}}}{=}\; 
 (\ev^L_H \otimes \id_{E(H,B)})(\id_{H^*} \otimes \omega_{B\text{-}\mathsf{FdMod}})(f) \\
  & \; \overset{\text{Prop.~\ref{prop:omegaB}}}{=}\; 
 (\ev^L_H \otimes \id_{E(H,B)})(f \otimes h_i \otimes \big[h \mapsto \bigl< h^i, S\left( S(h_{(1)})K_j h_{(2)}  \right) \bigr> K^j \big])\\[.2pc]
  & \; =
  \; 
 \langle f, h_i \rangle \big[h \mapsto \bigl< h^i, S\left( S(h_{(1)})K_j h_{(2)}  \right) \bigr> K^j \big])
 \\[.2pc]
  & \; \overset{\eqref{eq:dualbasis}}{=}
  \; 
  \big[h \mapsto \bigl<f, S\left( S(h_{(1)})K_j h_{(2)}  \right) \bigr> K^j \big]).
 \end{align*}
 }

 \vspace{-.25in}
 
\end{proof}

Now, we can prove the main result of this section.

\begin{proof}[Proof of Theorem~\ref{thm:nondeg-comodules}]
The antipode of $H^*$, denoted $S_{H^*}$, is given by $S_{H^*}(f) = f \circ S$. 
So, we obtain that $\theta_{B\text{-}\mathsf{FdMod}}$ from Proposition~\ref{prop:thetaBmod} is equal to $\theta_{B} \circ S_{H^*}$, where $\theta_{B}$ is given in Definition~\ref{def:thetaB}. Namely, we compute, for all $f \in H^*$:
{\small
\begin{align*}
\theta_{B} \circ S_{H^*}(f) \;  =  \; [h\mapsto \langle f \circ S, \; S(h_{(1)})K_i h_{(2)}\rangle K^i ]
 \; =  \;  [h\mapsto \langle f, \; S \left(S(h_{(1)})K_i h_{(2)} \right) \rangle K^i  ]
  \;=  \;
\theta_{B\text{-}\mathsf{FdMod}}(f).
\end{align*}
}
As $H$ is finite-dimensional, $S_{H^*}$ is bijective. Hence, $\theta_{{B\text{-}\mathsf{FdMod}}}$ is bijective  if and only if $\theta_B$ is bijective. Therefore,  $B\text{-}\mathsf{FdMod}$ is nondegenerate as a braided module category if and only if $B$ is factorizable as a quasitriangular comodule algebra.
\end{proof}

Now we illustrate Theorem~\ref{thm:nondeg-comodules} by continuing Example~\ref{ex:kG-FdMod}.

\begin{example} \label{ex:kG-FdMod-2}
Take $H = \Bbbk G$ and $B = \Bbbk G'$, for $G' \leq G$ as in Example~\ref{ex:kG-FdMod}. By using~\eqref{eq:dualHopf}, $\theta_{\Bbbk G'}$ from Definition~\ref{def:thetaB} sends $f \in (\Bbbk G)^*$ to $[g \mapsto \varepsilon_{(\Bbbk G)^*}(f) 1_{\kk G'}]$, for $g \in G$. Since $\varepsilon_{(\Bbbk G)^*}(\delta_g) = 0$ for all $g \in G$ not equal to $e$ (with $\{\delta_g\}_{g \in G}$ being the dual basis for $(\Bbbk G)^*$), $\theta_{\Bbbk G'}$ is injective if and only if $G$ is the trivial group $\langle e \rangle$. Thus, $\kk G'$ is a factorizable left $\kk G$-comodule algebra only when $G = \langle e \rangle$.

This is consistent with Example~\ref{ex:fact-reg}(b) in the case when $H = B = \kk G$ with $R = K = 1_{\kk G} \otimes 1_{\kk G}$. Namely, the following are equivalent: $\kk G$ is a factorizable left $\kk G$-comodule algebra; $\kk G$-$\mathsf{FdMod}$ is nondegenerate as a regular module category; $\kk G$-$\mathsf{FdMod}$ is nondegenerate as a braided finite tensor category; $\kk G$-$\mathsf{FdMod} \cong \mathsf{FdVec}$; and $G = \langle e \rangle$.
\end{example}


\subsection{Reflective algebras as factorizable comodule algebras}\label{subsec:examples-ndcomodules}
In this part, we present several families of factorizable comodules (in addition to those discussed in Example~\ref{ex:fact-reg}).

The following construction is from \cite[$\S\S$5,6]{laugwitz2023reflective}.
Let $(H, R = R_i \otimes R^i)$ be a finite-dimensional quasitriangular Hopf algebra  with dual basis $\{h_k, h^k\}_k$. Next, take $\widehat{H}$  to be the left $H$-module coalgebra that is equal to $H$ as vector spaces, and with the following comultiplication, counit, and left $H$-action formulas, for $h, \ell \in H$:
\[
\widehat{\Delta}(h) := \; 
\textstyle  R^j  h_{(1)}  R^i  \, \otimes \,  h_{(2)}  R_i  S^{-1}(R_j), \quad \quad
\widehat{\varepsilon}(h) := \; \varepsilon(h),\quad \quad
\ell \rightharpoonup h := \; \ell_{(2)}  h  S^{-1}(\ell_{(1)}).
\]
Therefore, $(\widehat{H}^*)^{\textnormal{op}}$ is a right $H^{\textnormal{cop}}$-module algebra via $\langle f \leftharpoonup \ell, h \rangle := \langle f, \ell \rightharpoonup h \rangle$,  for $h, \ell \in H$, and $f \in H^*$.
Now for a nonzero  left $H$-comodule algebra $A$, define the {\emph{reflective algebra of $A$ with respect to $H$}} to be the crossed product algebra:
\[
R_H(A):= A \rtimes_H (\widehat{H}^*)^{\textnormal{op}}   := A \circledast (\widehat{H}^*)^{\textnormal{op}}/ \langle f a - a_{[0]} (f \leftharpoonup a_{[-1]} ) \rangle_{a \in A, \; f \in (\widehat{H}^*)^{\textnormal{op}}}, 
\]
where $\circledast$ denotes the free product of algebras. These algebras represent reflective centers $\cE_{\cC}(\cM)$ in the Hopf case, which we see as follows. 

\begin{theorem}\cite[Theorem~6.6 and Corollary~6.8]{laugwitz2023reflective} \label{thm:LWYmain}
Retain the notation above. Then, 
\begin{equation*} \label{eq:EHA-intro}
\cE_{H\text{-}\mathsf{FdMod}}(A\text{-}\mathsf{FdMod}) \; \cong \; R_H(A)\text{-}\mathsf{FdMod}
\end{equation*}
as braided left module categories over $H\text{-}\mathsf{FdMod}$. In particular, the statements below hold.
\begin{enumerate}[\upshape (a)]
\item  $R_H(A)$ is a left $H$-comodule algebra with left $H$-coaction $\delta_{\textnormal{ref}}$ on $R_H(A)$ given by:
\[
\hspace{0.5in} \delta_{\textnormal{ref}}(a) := a_{[-1]} \otimes a_{[0]},\quad \quad
\delta_{\textnormal{ref}}(f) := \langle f,   R^j h_k R_i \rangle  R_j R^i  \otimes  h^k, \quad \quad
\delta_{\textnormal{ref}}(a  f):= \delta_{\textnormal{ref}}(a)\; 
\delta_{\textnormal{ref}}(f), 
\]
for $a \in A$ and $f \in (\widehat{H}^*)^{\textnormal{op}}$.

\medskip

\item $R_H(A)$ is quasitriangular as an $H$-comodule algebra with $K$-matrix:
\begin{equation*}
\label{eq:K-A}
K_{\textnormal{ref}}(A) := h_k \otimes h^k \; \in H \otimes (\widehat{H}^*)^{\textnormal{op}} \; \subset H \otimes R_H(A). 
\end{equation*}
Here, $K_{\textnormal{ref}}(A)$ does not depend on the choice of dual bases of $H$. \qed
\end{enumerate}
\end{theorem}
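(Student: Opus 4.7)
The plan is to establish the category equivalence first and then read off the comodule algebra and $K$-matrix structures from that equivalence. Fix $\cC = H\text{-}\mathsf{FdMod}$ and $\cM = A\text{-}\mathsf{FdMod}$, and let $(M, e^M) \in \cE_\cC(\cM)$. First, I would show that the natural family $\{e^M_X: X \act M \to X \act M\}_{X \in \cC}$ is encoded, by a Yoneda/coend argument applied to the regular module $H$, in a single linear map $H^* \to \mathrm{End}_\kk(M)$ (equivalently, an element of $H \otimes \mathrm{End}_\kk(M)$ recovered via $\mathrm{ev}_1: e^M_H \mapsto e^M_H(1_H \otimes -)$). I would then substitute the quasitriangular braiding $c_{X,Y}(x \otimes y) = (R^i \cdot y) \otimes (R_i \cdot x)$ into the reflection axiom \eqref{eq:reflection} and simplify: the resulting identity should force this linear map to be an algebra homomorphism into $\mathrm{End}_\kk(M)$, where the product on the source is dual to the $R$-matrix-deformed coproduct $\widehat{\Delta}$ defining $\widehat{H}$. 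This identifies the source with $(\widehat{H}^*)^{\op}$.

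Second, I would translate the morphism-compatibility \eqref{eq:ECMmorp} between $e^M$ and the $A$-action on $M$ into the crossed product relation $fa = a_{[0]}(f \leftharpoonup a_{[-1]})$ that cuts $A \circledast (\widehat{H}^*)^{\op}$ down to $R_H(A)$. Here the left $H$-coaction on $A$ enters precisely because one needs to commute the $(\widehat{H}^*)^{\op}$-action past the $A$-action through the $\cC$-action on $\cM$. Combined with the first step, this produces a functor $\cE_\cC(\cM) \to R_H(A)\text{-}\mathsf{FdMod}$ whose inverse sends an $R_H(A)$-module to its underlying $A$-module equipped with the reflection reconstructed from the $(\widehat{H}^*)^{\op}$-action. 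Verifying that both assignments respect morphisms is straightforward from the definitions.

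Third, to upgrade this $\kk$-linear equivalence to one of braided left $\cC$-module categories, I would (i) determine the $H$-coaction $\delta_{\text{ref}}$ on $R_H(A)$ by requiring that the left $\cC$-action on $\cE_\cC(\cM)$ given by \eqref{eq:ECMact} match the standard $\cC$-action on $R_H(A)\text{-}\mathsf{FdMod}$ under the equivalence; this forces the stated formula, since $\delta_{\text{ref}}|_A$ is $\delta_A$ and $\delta_{\text{ref}}|_{(\widehat{H}^*)^{\op}}$ must encode how the $R$-matrix intertwines the $H$-action with the deformed coproduct; and (ii) read off the $K$-matrix from the canonical ``identity'' reflection in $\cE_\cC(\cM)$: since it represents $\id_H \in H \otimes H^*$ under the encoding above, and $H^* \cong (\widehat{H}^*)^{\op}$ as vector spaces, we obtain $K_{\text{ref}}(A) = h_k \otimes h^k$, which is basis-independent as it represents the identity element.

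The main obstacle will be the algebraic verification of the $K$-matrix axiom $(\Delta \otimes \id)(K) = K_{23}R_{21}K_{13}R_{21}^{-1}$ from $\S$\ref{sec:qtcomodalg}(i): applying $\Delta$ to $h_k$ produces a sum that must equal a fourfold conjugation by $R_{21}$, and this equality is exactly where the deformed coproduct $\widehat{\Delta}$ of $\widehat{H}$ is needed to match both sides. The other $K$-matrix and comodule algebra axioms, as well as the well-definedness of $R_H(A)$ itself (that the crossed product relations are consistent), should reduce to bookkeeping with the identities $(\Delta \otimes \id)(R) = R_{13}R_{23}$ and $R\Delta = \Delta^{\op}R$ once this central computation is in hand.
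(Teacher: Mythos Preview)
The paper does not prove this theorem: it is quoted from \cite[Theorem~6.6 and Corollary~6.8]{laugwitz2023reflective} and the statement ends with a \qed, so there is no proof in the present paper to compare against. Your outline is a reasonable sketch of how such a result is established (and is broadly in the spirit of the cited reference): encode the reflection $e^M$ on the regular $H$-module as an $(\widehat{H}^*)^{\op}$-action, use \eqref{eq:ECMmorp} to obtain the crossed product relation, and then read off $\delta_{\text{ref}}$ and $K_{\text{ref}}$ from \eqref{eq:ECMact} and the canonical braiding. For the purposes of this paper, though, no proof is required beyond the citation.
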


As discussed in \cite{laugwitz2023reflective}, reflective algebras are the module-theoretic version of Drinfeld doubles of finite-dimensional Hopf algebras; the latter of which are key examples of factorizable Hopf algebras. 
Finally, we present a sufficient condition for  reflective algebras to be factorizable comodule algebras.

\begin{proposition} \label{prop:RHA-fact}
If the reflective algebra $R_H(A)$ is $H$-simple, then $R_H(A)$ is a factorizable left $H$-comodule algebra. 
\end{proposition}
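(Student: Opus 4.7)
The plan is to apply Lemma~\ref{lem:theta}, which reduces factorizability of $R_H(A)$ to the injectivity of $\theta_{R_H(A)}: H^* \to E(H, R_H(A))$, and then to establish this injectivity by a direct computation using the explicit $K$-matrix $K_{\textnormal{ref}}(A)$ of Theorem~\ref{thm:LWYmain}(b).

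First I would confirm that Hypothesis~\ref{hyp:fact} applies to the comodule algebra $R_H(A)$. Since $R_H(A)$ is $H$-simple, it is in particular $H$-indecomposable, so by Lemma~\ref{lem:AM07}(b,c), the category $R_H(A)\text{-}\mathsf{FdMod}$ is an indecomposable and exact left $(H\text{-}\mathsf{FdMod})$-module category. Hence Lemma~\ref{lem:theta} is available and it is enough to prove that $\theta_{R_H(A)}$ is injective.

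Substituting $K_{\textnormal{ref}}(A) = h_k \otimes h^k$ (with $\{h_k\}_k \subset H$ and $\{h^k\}_k \subset (\widehat{H}^*)^{\textnormal{op}}$ dual bases, the latter viewed inside $R_H(A)$ via the canonical map from $(\widehat{H}^*)^{\textnormal{op}}$) into Definition~\ref{def:thetaB} yields the explicit formula
\[
\theta_{R_H(A)}(f)(h) \; = \; \bigl\langle f, \; S(h_{(1)}) \, h_k \, h_{(2)} \bigr\rangle \; h^k \; \in \; R_H(A),
\]
for $f \in H^*$ and $h \in H$, with the $k$-summation suppressed. Suppose $\theta_{R_H(A)}(f) = 0$. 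Granting that the natural map $(\widehat{H}^*)^{\textnormal{op}} \hookrightarrow R_H(A)$ is injective, the images of the $h^k$ remain linearly independent in $R_H(A)$, so each coefficient $\langle f, S(h_{(1)}) h_k h_{(2)} \rangle$ vanishes for every $k$ and every $h$. Using that $\{h_k\}_k$ is a basis of $H$ then gives $\langle f, S(h_{(1)}) x h_{(2)} \rangle = 0$ for all $h, x \in H$, and setting $h = 1$ forces $\langle f, x \rangle = 0$ for all $x \in H$, so $f = 0$. This establishes injectivity of $\theta_{R_H(A)}$, and Lemma~\ref{lem:theta} finishes the proof.

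The main obstacle is the linear-independence claim above, i.e.\ the injectivity of the natural map $(\widehat{H}^*)^{\textnormal{op}} \hookrightarrow R_H(A)$. The defining relations of the crossed product $A \rtimes_H (\widehat{H}^*)^{\textnormal{op}}$ only convert $fa$ into $a_{[0]}(f \leftharpoonup a_{[-1]})$, i.e.\ they reorder generators of the two subalgebras rather than imposing relations internal to $(\widehat{H}^*)^{\textnormal{op}}$, which is precisely what one needs for a PBW-type vector-space isomorphism $R_H(A) \cong A \otimes (\widehat{H}^*)^{\textnormal{op}}$. This decomposition should follow from the structural results of \cite{laugwitz2023reflective}; invoking it (or deducing it directly from the defining relations) is the single non-formal ingredient of the proof, with the remainder being a short Hopf-algebraic calculation.
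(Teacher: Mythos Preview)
Your proof is correct and follows essentially the same route as the paper's: reduce to injectivity of $\theta_{R_H(A)}$ via Lemma~\ref{lem:theta}, plug in the explicit $K$-matrix $K_{\textnormal{ref}}(A)=h_k\otimes h^k$, and evaluate at $h=1_H$. The paper's write-up is slightly terser: rather than extracting coefficients, it observes directly via \eqref{eq:dualbasis} that $\theta_{R_H(A)}(f)(1_H)=\langle f,h_k\rangle\,h^k$ is the image of $f$ itself under $(\widehat{H}^*)^{\textnormal{op}}\to R_H(A)$, so $\theta_{R_H(A)}(f)=\theta_{R_H(A)}(f')$ forces $f=f'$. The injectivity of $(\widehat{H}^*)^{\textnormal{op}}\hookrightarrow R_H(A)$ that you flag as the main obstacle is exactly what the paper uses as well, though it leaves this implicit; it is the standard PBW decomposition of the crossed product from \cite{laugwitz2023reflective}.
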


\begin{proof} 
The $H$-simplicity condition implies that $R_H(A)$ satisfies Hypothesis~\ref{hyp:fact}.
Now by Lemma~\ref{lem:theta}, it suffices to show that $\theta_{R_H(A)}: H^* \to E(H, R_H(A))$ from Definition~\ref{def:thetaB} is injective. Take $f,f' \in H^*$ and assume that $\theta_{R_H(A)}(f) = \theta_{R_H(A)}(f')$. Then, by Theorem~\ref{thm:LWYmain}(b), we get that 
\[
\langle f, S(h_{(1)}) h_k h_{(2)} \rangle h^k \; = \; \langle f', S(h_{(1)}) h_k h_{(2)} \rangle h^k,
\]
for all $h \in H$. Therefore,  
$f  = \langle f,  h_k  \rangle h^k  =  \langle f',  h_k  \rangle h^k = f'$ using $h = 1_H$, along with \eqref{eq:dualbasis}. Thus, $\theta_{R_H(A)}$ is injective, as desired.
\end{proof}

We illustrate the result above using the Drinfeld double of a finite group; see \cite[$\S$6.4]{laugwitz2023reflective}.

\begin{example}
Take a finite group $G$, and take  $\{x\}_{x \in G}$ and $\{\delta_x\}_{x \in G}$ to be the dual bases of $\Bbbk G$ and $(\Bbbk G)^*$, resp. Then, the Drinfeld double $D(G)$ of $\Bbbk G$ and its dual $D(G)^*$ have bases $\{\delta_x y\}_{x,y \in G}$ and $\{x \delta_y\}_{x,y \in G}$, resp. It is well known that $D(G)$ is a finite-dimensional quasitriangular Hopf algebra. By \cite[Proposition~6.18]{laugwitz2023reflective}, the reflective algebra $R_{D(G)}(\Bbbk)$ has basis $\{x \delta_y\}_{x,y \in G}$, with product,
\[
\big( x \delta_y \big) \big( x' \delta_{y'} \big)= 
\delta_{y', y^{-1} x^{-1} y x y} \; y^{-1} x y x' y^{-1} x^{-1} y x \; \delta_y,
\]
and left $D(G)$-comodule structure,
$
\textstyle \delta_{\textnormal{ref}} (x \delta_y) = 
\textstyle \sum_{g \in G} 
\delta_g \,  y^{-1} x y \, \otimes\,  g^{-1} x g \, \delta_{g^{-1} y},
$
and with $K$-matrix given by $K=\sum_{g,h \in G}  \delta_g h \otimes g \delta_h$.
We claim that $R_{D(G)}(\Bbbk)$ is $D(G)$-simple. To see this, take a nonzero $D(G)$-costable ideal $I$ of $R_{D(G)}(\Bbbk)$, with a nonzero element $f \in I$. Then, there is a  term  $\alpha \, x \delta_y$ of $f$, for $\alpha \in \Bbbk^\times$. By $D(G)$-costability,  $g^{-1} x g \; \delta_{g^{-1} y} \in I$ for each $g \in G$. Taking $g = y \ell^{-1} $ with $a:=\ell y^{-1} x y \ell^{-1}$, we get that $a \, \delta_\ell \in I$, for each $\ell \in G$. Next, $I$ is an ideal, so for each $h \in G$,
\[
h \, \delta_\ell
\; = \; 
(a \, \delta_\ell)
(\ell^{-1}a^{-1}\ell h a^{-1} \ell^{-1} a \ell \; \delta_{\ell^{-1} a^{-1} \ell a \ell}) 
\; \; \in I.
\]
Thus, $h \delta_\ell \in I$ for all $h, \ell \in G$, and hence, $I = R_{D(G)}(\Bbbk)$. So, $R_{D(G)}(\Bbbk)$ is $D(G)$-simple. Now by Proposition~\ref{prop:RHA-fact}, the reflective center $R_{D(G)}(\Bbbk)$ is a factorizable left $D(G)$-comodule algebra.
\end{example}

\section*{Acknowledgements} 

We are grateful to the anonymous referees who supplied careful and insightful comments that improved the quality of our manuscript. The authors were partially supported by the NSF Grant \#DMS-1928930, while the authors were in residence at the Simons Laufer Mathematical Sciences Institute for the Quantum Symmetries Program Reunion in Summer 2024. CW was also supported by the NSF Grant \#DMS--2348833, and an AMS Claytor-Gilmer Research Fellowship. HY was supported by a start-up grant from the University of Alberta and an NSERC Discovery Grant.

\bibliography{Nondeg-mod-cats}
\bibliographystyle{amsrefs}

\end{document}